\newcommand{\stkout}[1]{\ifmmode\text{\sout{\ensuremath{#1}}}\else\sout{#1}\fi}
\crefname{enumi}{}{}
\crefname{equation}{}{}
\crefname{assumption}{Assumption}{Assumptions}
\newtheorem{theorem}{Theorem}
\newcommand{\raisedtarget}[1]{%
  \raisebox{\fontcharht\font`P}[0pt][0pt]{\hypertarget{#1}{}}%
}
\newtheorem{proposition}{Proposition}[section]
\newtheorem{lemma}[proposition]{Lemma}
\newtheorem{corollary}[proposition]{Corollary}
\theoremstyle{definition}
\newtheorem{definition}[proposition]{Definition}
\newtheorem{remark}[proposition]{Remark}
\newtheorem{example}[proposition]{Example}
\newtheorem{assumption}{Assumption}
\numberwithin{equation}{section}
\def \R {\mathbb{R}}
\def \T {\mathbb{T}}
\def \rmH {\mathrm{H}}
\def \L {\mathscr{L}}
\def \rmL {\mathrm{L}}
\def \rmW {\mathrm{W}}
\def \rmd {\mathrm{d}}
\def \Tr {\mathrm{Tr}}
\def \calD {\mathcal{D}}
\def \calU {\mathcal{U}}
\def \calV {\mathcal{V}}
\def \bfA {\mathbf{A}}
\def \tr {\mathrm{tr}}
\def \rmC {\mathrm{C}}
\newcommand{\ssubset}{\subset\joinrel\subset}
\DeclarePairedDelimiter\abs{\lvert}{\rvert}
\DeclarePairedDelimiter\norm{\lVert}{\rVert}
\DeclarePairedDelimiter{\pair}{\langle}{\rangle}
\def\@tocline#1#2#3#4#5#6#7{\relax
\ifnum #1>\c@tocdepth 
\else
\par \addpenalty\@secpenalty\addvspace{#2}%
\begingroup \hyphenpenalty\@M
\@ifempty{#4}{%
\@tempdima\csname r@tocindent\number#1\endcsname\relax
}{%
\@tempdima#4\relax
}%
\parindent\z@ \leftskip#3\relax \advance\leftskip\@tempdima\relax
\rightskip\@pnumwidth plus4em \parfillskip-\@pnumwidth
#5\leavevmode\hskip-\@tempdima
\ifcase #1
\or\or \hskip 1em \or \hskip 2em \else \hskip 3em \fi%
#6\nobreak\relax
\dotfill\hbox to\@pnumwidth{\@tocpagenum{#7}}\par
\nobreak
\endgroup
\fi}
\begin{document}


\title[Weak and Perron Solutions]{Weak and Perron Solutions for Stationary Kramers-Fokker-Planck Equations in Bounded Domains}

\author{Benny Avelin}
\address[B.~Avelin]{Department of Mathematics, Uppsala University\\
    751 05 Uppsala, Sweden}
\email{benny.avelin@math.uu.se}

\author{Mingyi Hou}
\address[M.~Hou]{Department of Mathematics, Uppsala University\\
    751 05 Uppsala, Sweden}
\email{mingyi.hou@math.uu.se}

\date{\today}

\begin{abstract}
    In this paper, we investigate weak solutions and Perron-Wiener-Brelot solutions to the linear stationary Kramers-Fokker-Planck equation in bounded domains.
    We establish the existence of weak solutions in product domains by applying the Lions-Lax-Milgram theorem and the vanishing viscosity   method. Furthermore, we show that these solutions coincide in well-behaved domains.
    Building on the existence of weak solutions in product domains, we develop the foundational theory of Perron-Wiener-Brelot solutions in     arbitrary bounded domains.
    Our results rely on recent advancements in the theory of kinetic Fokker-Planck equations with rough coefficients.
\end{abstract}

\subjclass[2020]{35Q84, 35D30, 35D99, 35J25; 35H10, 35J70, 31C45}
\keywords{Kramers-Fokker-Planck, hypoelliptic, weak solution, trace problem, PWB solution, Dirichlet problem.}

\maketitle

\tableofcontents


\section{Introduction}
The general Kramers-Fokker-Planck equation in divergence form is given by
\begin{equation} \label{eq:kfp}
    \mathscr{L} u := \nabla_v\cdot (\mathbf{A} \nabla_v u) + \mathbf{b}\cdot\nabla_v u + v\cdot\nabla_x u = 0,
\end{equation}
where $(x, v)\in \mathbb{R}^n\times\mathbb{R}^n$, with $n\geq 1$ an integer. Here,
$\mathbf{A}=\mathbf{A}(x,v)$ is a real symmetric matrix and $\mathbf{b} = \mathbf{b}(x,v)$ is a vector field.
We will sometimes denote a point in $\mathbb{R}^{n}\times\mathbb{R}^n$ as $\xi=(x, v)$.
This operator appears in special cases, as the generator of a kinetic stochastic process, the corresponding parabolic equation $\partial_t \varrho = \mathscr{L}^\ast \varrho$ is the kinetic Fokker-Planck equation for the density of the process, where $\mathscr{L}^\ast$ is the adjoint operator of $\mathscr{L}$.

\begin{assumption}\label{assump:1}
    Throughout the paper, we assume that $\mathbf{A} = (a_{ij})_{1\leq i,j\leq n}$ is a symmetric matrix, that $\mathbf{b} = (b_1,\dots, b_n)$ is a vector field, and that $a_{ij}, b_i : \R^{2n} \to \mathbb{R}$ are both measurable. Moreover, they satisfy the following conditions:
    \begin{equation*}
        \mathbf{A} \geq \lambda \mathbf{I},\,\abs{a_{ij}} \leq \Lambda \text{ for } i,j=1,\dots,n \text{ and } \abs{\mathbf{b}}\leq \nu,
    \end{equation*}
    for some constants $0<\lambda\leq \Lambda$ and $\nu\geq 0$, where $\mathbf{I}$ denotes the identity matrix.
\end{assumption}

In this paper, we will study weak and Perron-Wiener-Brelot solutions for the stationary Kramers-Fokker-Planck \cref{eq:kfp} in bounded domains.

Kramers-Fokker-Planck equations (or Kolmogorov equations) appear in the theory of stochastic processes.
A prototype of such equations is as the generator of the following process, let $(X_t, V_t)\in\R\times\R$ be stochastic processes satisfying the stochastic differential equation
\begin{equation*}
    \begin{cases}
        \mathrm{d}X_t = V_t\,\mathrm{d}t, \\
        \mathrm{d}V_t = \mathrm{d} B_t,
    \end{cases}
\end{equation*}
where $B_t$ is the standard Brownian motion in $\R$.
Its generator is $\frac{1}{2}\partial_{v}^2 + v\cdot\partial_x $,
and the density $\varrho(t,x,v)$ satisfies the dual equation (forward Fokker-Planck)
$\partial_t \varrho = \frac{1}{2}\partial_v^2 \varrho - v\cdot\partial_x \varrho$,
which was studied by Kolmogorov back in the 30s,~\cite{Kol}.

Another example that motivates our study is the kinetic process,
\begin{equation*}
    \begin{cases}
        \mathrm{d} X_t = V_t\, \mathrm{d} t, \\
        \mathrm{d} V_t = - (V_t +\nabla_x U(x))\,\mathrm{d} t + \sqrt{2}\,\mathrm{d} B_t,
    \end{cases}
\end{equation*}
where $U(x)$ is a smooth function, such as a quadratic potential, and $B_t$ is the standard Brownian motion in $\R^n$.
It is well known that the generator of this process is given by
\begin{equation}\label{eq:kfpvillani}
    (\Delta_v -v\cdot \nabla_v ) + (-\nabla_x U\cdot\nabla_v + v\cdot \nabla_x ).
\end{equation}
This process appears naturally in various fields of kinetic theory and statistical physics, including plasma physics, condensed matter physics, and more recently, in machine learning and the optimization of deep neural networks using the method of stochastic gradient descent with momentum.

One of our main motivations for developing the solvability of the Dirichlet problem, particularly for the stationary problem related to \cref{eq:kfp}, is to develop the necessary toolbox to establish an Eyring-Kramers formula for the exit time from a metastable equilibrium, as in~\cite{BEGK}; see also~\cites{AJV,AJ}.
Another approach involves the so-called quasi-stationary distribution, which requires proving existence in bounded position and unbounded velocity, as done using probabilistic methods in~\cites{LRR1,LRR2}.

The Kramers-Fokker-Planck equation has been extensively studied from various perspectives.
For instance, a comprehensive introduction to non-degenerate Kolmogorov equations from a PDE perspective can be found in~\cite{BKRS15}.
It is well established that, under certain conditions on the coefficient matrices, the Kolmogorov operator is hypoelliptic in the terminology of H\"ormander,~\cite{Hor67}.
In the context of hypocoercivity, \cref{eq:kfpvillani} has been extensively studied starting with the works of Villani~\cite{Vil09}; see, for instance,~\cites{HN04, HHS11, DMS15, AAMN21, BPM22, CLW23}, among others.

In recent years, a lot of attention has been given to the study of Kolmogorov equations with rough coefficients.
This started with the development of the De Giorgi-Nash-Moser theory originally in~\cite{PP04}, see also~\cite{WZ11}. Recently the Harnack inequality was proved in the case of rough coefficients in~\cite{GIMV19}, which sparked a series of works on the subject; see, for instance~\cites{GM22,GI23,AR22,LN22,GN22,Zhu22,Sil22}.

Originally, the existence of weak solutions for the time-dependent case was established in~\cite{Car99}. Later, in~\cite{AAMN21}, a variational approach was developed to prove the existence of weak solutions on the torus, which was later extended to bounded domains in the time-dependent case in~\cite{LN21}.
Noteworthy is~\cite{NZ21}, which establishes weak solutions in the whole space for the fractional version of the degenerate Kolmogorov equation.
In~\cite{AHN23}, a semi-spectral-Galerkin method was developed to prove the existence of weak solutions in the time-dependent case on the torus.
An alternative approach in~\cite{AIN24weak} established the existence of weak solutions in the whole space for equations with rough coefficients, avoiding the use of De Giorgi-Nash-Moser theory and also covering the fractional case.
Finally, the concurrent works~\cites{Zhu22,Sil22} appeared: the former established the existence of weak solutions in the time-dependent case, while the latter addressed boundary point regularity using the renormalized solution approach of DiPerna-Lions~\cite{DL89}.

In the first part of the paper, we combine the method developed in~\cite{Zhu22} with~\cite{Sil22} and extend the existence result to the stationary case.
More importantly, we collect the current knowledge about different notions of trace and conclude that for the degenerate Kolmogorov equations, a weak trace theory holds, which is enough to establish a comparison principle. This improved understanding serves as a solid foundation for future studies on the weak solution theory for degenerate Kolmogorov equations.

In the second part of the paper, we develop the theory of Perron-Wiener-Brelot solutions, including resolutivity (in the non-divergence case), which means that the upper and lower Perron solutions coincide and thus provides a unique Perron solution to the Dirichlet problem.

\subsection{The trace problem}
Concerning the theory of weak solutions to the boundary value problem for the Kramers-Fokker-Planck equations, a well-known open problem is the so-called trace problem; for more details, see also~\cites{Sil22,AAMN21}.
To describe the problem briefly, let $\mathcal{U}\subset\mathbb{R}^n$ and $\mathcal{V}\subset\mathbb{R}^n$ be sets for $x$ and $v$, respectively.
Consider the function space $\rmH^1_{\mathrm{hyp}} (\mathcal{U}\times\mathcal{V})$, defined as $\rmH^1_{\mathrm{hyp}} (\mathcal{U}\times\mathcal{V}):= \{ u(x, v)\in \rmL^2(\mathcal{U}; \rmH^1(\mathcal{V})) \textrm{ s.t. } v\cdot\nabla_x u(x,v) \in \rmL^2(\mathcal{U}; \rmH^{-1}(\mathcal{V})) \}$ where $\rmH^{-1}(\mathcal{V})$ is the dual space of $\rmH^1_0(\mathcal{V})$.
It is known only in the case $n=1$ that there exists a trace (in the classical sense) for the space $\rmH^1_\mathrm{hyp}$, as shown in~\cite{BG68}.
In higher dimensions, due to the transport term $v\cdot\nabla_x $, we lack information near the set $\{(x,v)\in\partial\mathcal{U}\times\mathcal{V}:v\cdot\mathbf{n}_x = 0\}$, where $\mathbf{n}_x$ is the unit outer normal of $\mathcal{U}$.
Several attempts have been made towards a trace theorem; see, for example, the discussion in~\cites{AAMN21}, but the problem remains open.
Nevertheless, one can establish the theory of weak solutions by avoiding the boundary, as demonstrated in~\cites{AAMN21, AHN23}, by considering a space like $\T^n\times\mathbb{R}^n$, where $\T^n$ denotes the $n$-dimensional torus.
A full existence and uniqueness result using the variational approach is later given in~\cite{LN21}, where the notion of a trace can be avoided.

A weaker notion of trace indeed dates back to~\cite{Mis00}, where Mischler deals with the $\rmL^1$ solutions to Vlasov equations in bounded position and unbounded velocity domain by implementing the convolution-translation and the renormalization from~\cite{DL89}.
Later on, in~\cite{Mis10}, the technique is generalized to other kinetic equations, for example, Boltzmann, Vlasov-Possion, and Fokker-Planck type equations.
Recently, Silvestre~\cite{Sil22} discovered that a weaker notion of trace can be defined for the space $\rmH_{\mathrm{hyp}}^1$, which is canonical for an $\rmL^2$ weak solution theory.
We will continue with this and elaborate on the definition of this trace.
We formally define it as the weak trace, \cref{thm:weaktrace}, as opposed to the classical trace, which is stronger.

\section{Novelty and contributions}

This paper is a continuation of the recent developments in the theory of weak solutions to the Kramers-Fokker-Planck equation with rough coefficients. We work with the more natural notion of weak solutions that does not a priori involve any trace function, in contrast to~\cite{Zhu22}. Instead, we use the notion of weak trace discovered by~\cite{Sil22}, and the convolution-translation technique to prove that a renormalized Green's formula still holds, provided the domain is nice enough. Comparing this to~\cite{Zhu22} where they construct solutions which are nice enough for the renormalized Green's formula to hold. The first main contribution from our side is that in-fact the renormalized Green's formula is always true for weak solutions, as the weak trace is indeed enough. Using the renormalized Green's formula we prove a weak comparison principle working with the weak trace. This allows us to for example show that solutions obtained using functional analytic techniques and the ones constructed using the vanishing viscosity method are the same in product domains that are $\rmC^{1,1}$ in $x$.

In the second part of the paper we use the notion of weak solutions to prove the existence of Perron solutions and resolutivity (in the non-divergence case) in arbitrary bounded domains. The method here is inspired by nonlinear parabolic potential theory where there is no access to any representation formula or the existence of solutions in heat balls or similar.
The main reason for taking this approach for the stationary problem is that we have very little information of the Greens function or fundamental solution if the operator doesn't have a nice invariant Galilean transformation. The second reason is that due to the rough coefficients, even in the time-dependent case we cannot use any explicit Greens functions. The consequence is that we cannot study the potential theory using Doob's $\beta$-harmonic spaces directly, as initially done in~\cite{Man97} for H\"older continuous coefficients.
For more information, see, for instance, the overview~\cite{AP20} as well as the recent works~\cites{Kog17,Kog19,Kog20,KLT18}.
Instead, as mentioned above we borrow the methodology from nonlinear parabolic potential theory for the $p$-Laplacian~\cite{KL96}, which relies on the existence of weak solutions in velocity-space (stationary) cylinders (not a Doob basis), and De Giorgi-Nash-Moser (regularity) theory.

\section{Main results}\label{sec:mainresults}
First, we investigate weak solutions in product domains, whereby \emph{domain} we refer to an open and connected set.

\begin{assumption}\label{assump:D0}
    Throughout the paper, we denote
    \begin{equation*}
        \calD = \calU\times \calV
    \end{equation*}
    where $\calU,\calV\subset\R^n$ are bounded domains such that, $\partial \calU$, $\partial \calV$ are $\rmC^{0,1}$ (Lipschitz).
\end{assumption}
The boundary of $\calD$ is split into the following parts:
\begin{equation*}
    \partial^v \calD := \calU\times\partial \calV, \textrm{ and } \partial^x \calD := \partial \calU \times \calV.
\end{equation*}
$\partial^x \calD$ is further split into positive, singular and negative parts:
\begin{equation}\label{def:weak:bdy}
    \begin{split}
        \partial^x_{+}\calD := \left\{ (x,v)\in\partial^x \calD:  v\cdot\mathbf{n}_x > 0 \right\}, \\
        \partial^x_{0}\calD := \left\{ (x,v)\in\partial^x \calD:  v\cdot\mathbf{n}_x = 0 \right\}, \\
        \partial^x_{-}\calD := \left\{ (x,v)\in\partial^x \calD:  v\cdot\mathbf{n}_x < 0 \right\}.
    \end{split}
\end{equation}
Here, $\mathbf{n}_x$ denotes the unit outward normal vector of $\calU$.
Then the \emph{hypoelliptic boundary} of $ \calD$ is defined to be
\begin{equation*}
    \partial_{\mathrm{hyp}}  \calD := \overline{\partial^x_+ \calD\cup\partial^x_0 \calD\cup\partial^v \calD}.
\end{equation*}

Below are some notation conventions.
Recall that a point $(x, v)\in \R^{2n}$ is denoted as $\xi = (x, v)$.
We will sometimes write $\nabla_\xi = (\nabla_x , \nabla_v)$.

Let $\Omega\subset\R^{2n}$ be a domain, we define the standard Sobolev space
\begin{equation*}
    \rmH^1(\Omega) := \{ f\in \rmL^2(\Omega): \nabla_\xi f\in \rmL^2(\Omega)\},\,\norm{f}_{\rmH^1(\Omega)} = \left(\int_\Omega f^2 +\abs{\nabla_\xi f}^2 \,\mathrm{d}\xi\right)^{1/2},
\end{equation*}
where $\nabla_\xi$ is taken in the weak sense.
The space of $\rmH^1$ functions that vanishes on the boundary is denoted by $\rmH^1_0 (\Omega):=\{f\in \rmH^1(\Omega): \Tr(f) = 0\}$
where $\Tr: \rmH^1(\Omega)\to \rmL^2(\partial \Omega)$ is the standard trace operator.
The dual of $\rmH^1_0(\Omega)$ is denoted by $\rmH^{-1}(\Omega)$.
Let us also define, for $\calD$ satisfying \cref{assump:D0},
\begin{equation*}
    \rmH^1_{\partial^v}(\calD) := \rmH^1(\calD) \cap \rmL^2(\calU;\rmH^1_0(\calV)).
\end{equation*}
For a function $u\in \rmL^2(\calU; \rmH^1(\calV))$, we define $\Tr_v(u) \in \rmL^2(\partial^v \calD)$ to be $(\Tr_v(u))(x,v) := \Tr(u(x, \cdot))$ whenever $(x,v) \in \partial^v \calD$.

Moreover, by $a\lesssim b$ for $a,b>0$ we mean that there is a constant $C>0$ such that $a\leq Cb$.

Finally, let $f$ be any function, we denote by $f_+:=\max\{f, 0\}$ the positive part of $f$.

\subsection{Existence and uniqueness of weak solutions}
Consider the following Dirichlet problem, where $\calD$ is as in \cref{assump:D0}:
\begin{equation}\label{eq:bvp2}
    \begin{cases}
        -\mathscr{L} u(x,v)=f(x,v),             & \textrm{ in } \calD,                       \\
        \hphantom{-\mathscr{L}}u(x,v) = g(x,v), & \textrm{ on } \partial_\mathrm{hyp} \calD, \\
    \end{cases}
\end{equation}
where $\L$ is as in \cref{eq:kfp},
$f(x,v)$ and $g(x,v)$ are functions and
\begin{equation}\label{eq:bdydata}
    g(x,v):=
    \begin{cases}
        g_1(x,v), & \textrm{ on }\partial^v \calD, \\
        g_2(x,v), & \textrm{ on }\partial^x \calD.
    \end{cases}
\end{equation}
When addressing the existence of weak solutions to the stationary equation we always require the following assumption.
\begin{assumption}\label{assump:posdiv}
    The weak divergence satisfies $\nabla_v \cdot \mathbf{b} \geq 0$.
\end{assumption}
Although this is a technical assumption, it covers many equations of interest. For instance, equation \cref{eq:kfpvillani} indeed has the equivalent form of $\mathbf{b}=0$ for its weak formulation defined with respect to $\rmL^2$ with weight $e^{-|v|^2/2 - U}$.

For the domain $\calD$ from \cref{assump:D0} we consider the Bochner spaces $\rmL^2(\calU;\rmH^1(\calV))$, as well as the weighted space $\rmL^2 (\partial^x \calD, \abs{v\cdot\mathbf{n}_x})$ and $\rmL^2(\partial^x \calD, \abs{v\cdot\mathbf{n}_x}^2)$. These are defined to be $\rmL^2$ on $\partial^x \calD$ with the measure $\abs{v\cdot\mathbf{n}_x}\rmd S$ and $\abs{v\cdot\mathbf{n}_x}^2\rmd S$ respectively, where $\rmd S$ is the restriction of the $2n-1$ dimensional Hausdorff measure onto $\partial^x \calD$.
The canonical function space associated with \cref{eq:bvp2} is the hypoelliptic space
\begin{equation*}
    \rmH^1_{\mathrm{hyp}}(\calD) := \left\lbrace u(x,v) \in  \rmL^2(\calU;\rmH^1(\calV)) \text{ s.t. } v\cdot\nabla_x u(x,v) \in L^2(\calU;\rmH^{-1}(\calV))\right\rbrace,
\end{equation*}
where $\rmH^{-1}(\calV)$ is the functional dual of $\rmH^1_0(\calV)$, endowed with the norm
\begin{equation*}
    \norm{u}_{\rmH^1_{\mathrm{hyp}}}: = \left(\norm{u}_{\rmL^2(\calU;\rmH^1(\calV))}^2 +\norm{v\cdot\nabla_x u}_{\rmL^2(\calU;\rmH^{-1}(\calV))}^2 \right)^{1/2}.
\end{equation*}
We also define the space
\begin{gather*}
    \rmH^1_{\mathrm{hyp},\partial^v }(\calD) := \left\lbrace u(x,v)\in\rmH^1_{\mathrm{hyp}}(\calD) \text{ s.t. } \Tr_v(u) = 0 \text{ on } \partial^v \calD \right\rbrace.
\end{gather*}
It is observed in~\cite{Sil22} that the classical $\rmH^1$ trace can be extended to $\rmH^1_{\textrm{hyp}}$ in a weak sense:

\begin{lemma}[Weak trace]\label{thm:weaktrace}
    Let $\calD\subset\R^{2n}$ satisfy \cref{assump:D0} where $\partial\calU$ is $\rmC^{1,1}$, then there is a linear operator
    \begin{equation}\label{eq:weaktrace}
        \tr_x: \rmH^1_\mathrm{hyp}(\calD) \to \rmL^2_{\mathrm{loc}}(\partial^x\calD, \abs{v\cdot\mathbf{n}_x}^2),
    \end{equation}
    such that, for $\varphi\in \rmC^1(\overline{\calD})$, $\varphi|_{\partial^v\calD} = 0$, we have the following integration by parts formula:
    \begin{equation}\label{eq:intbypart}
        \pair{v\cdot\nabla_x u,\, \varphi }_{\rmL^2(\calU;\rmH^{-1}(\calV)), \rmL^2(\calU;\rmH^1_0(\calV))} = \int_{\partial^x \calD} v\cdot\mathbf{n}_x \tr_x(u) \varphi\,\rmd S - \iint_{\calD} u (v\cdot\nabla_x \varphi)\,\rmd x\rmd v.
    \end{equation}
\end{lemma}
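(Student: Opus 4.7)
The plan is to localize via a partition of unity and straighten $\partial\calU$ with a $\rmC^{1,1}$ diffeomorphism, so that locally $\calU$ corresponds to a half-space $\{x_1>0\}$ and $v\cdot\mathbf{n}_x$ becomes $-v_1$. The $\rmC^{1,1}$ hypothesis is essential: it provides Lipschitz Jacobians, which is exactly what is needed for the change of variables to preserve both $\rmL^2(\calU;\rmH^1(\calV))$ and its dual $\rmL^2(\calU;\rmH^{-1}(\calV))$, so that the push-forward of $u$ remains in $\rmH^1_\mathrm{hyp}$.

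In the flattened coordinates, I would define $\tr_x(u)$ via duality. For $\eta\in \rmC_c^\infty((0,\infty))$ and $\psi\in\rmC_c^1$ with $\psi(x',v)=0$ whenever $v\in\partial\calV$, set $\varphi(x_1,x',v) := \eta(x_1)\psi(x',v)$ and
\begin{equation*}
    I_\psi(x_1) := \iint u(x_1,x',v)\,v_1\,\psi(x',v)\,\rmd x'\,\rmd v.
\end{equation*}
Integrating by parts in $x_1$ alone for smooth $u$ yields
\begin{equation*}
    \int_0^\infty \eta'(x_1)\,I_\psi(x_1)\,\rmd x_1 = -\pair{v\cdot\nabla_x u,\varphi}_{\rmL^2(\calU;\rmH^{-1}),\rmL^2(\calU;\rmH^1_0)} - \iint_\calD u\,(v'\cdot\nabla_{x'}\varphi)\,\rmd x\,\rmd v,
\end{equation*}
and the right-hand side is continuous in $\eta\in\rmL^2$ for any $u\in\rmH^1_\mathrm{hyp}(\calD)$. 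This identifies $\partial_{x_1}I_\psi$ as an $\rmL^2_\mathrm{loc}$ distribution in $x_1$, so $I_\psi\in\rmH^1_\mathrm{loc}$ and admits a trace value at $x_1=0$, depending linearly on $\psi$. I would declare this value to be $\iint v_1\,\tr_x(u)(x',v)\,\psi\,\rmd x'\,\rmd v$.

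To promote the functional to an $\rmL^2_\mathrm{loc}$ density with the claimed weight, I would test with $\psi = v_1\zeta$ for $\zeta\in\rmC_c^1$ supported in a relatively compact $K\subset\partial^x\calD$ away from $\overline{\partial^v\calD}$, and apply Cauchy-Schwarz with the $\rmH^{-1}$-$\rmH^1_0$ duality to bound the trace functional by $\|u\|_{\rmH^1_\mathrm{hyp}}\,\|\zeta\|_{\rmL^2(K)}$. Riesz representation then places $\tr_x(u)$ in $\rmL^2(K, v_1^2\,\rmd S)$, which becomes $\rmL^2(K, |v\cdot\mathbf{n}_x|^2\,\rmd S)$ after reverting to the original coordinates. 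To extend \cref{eq:intbypart} from smooth to general $u\in\rmH^1_\mathrm{hyp}$, I would mollify in $v$: setting $u_\epsilon = u\ast_v\rho_\epsilon$, a Friedrichs commutator estimate shows that $v\cdot\nabla_x u_\epsilon - (v\cdot\nabla_x u)_\epsilon\to 0$ in $\rmL^2(\calU;\rmH^{-1}(\calV))$, permitting passage to the limit both in the defining identity and in the trace functional.

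The main obstacle is the degeneracy at $\partial^x_0\calD$, where $v\cdot\mathbf{n}_x=0$: the characteristic lines of $v\cdot\nabla_x$ are tangent to $\partial\calU$ precisely at such points, so the weak formulation transmits no transversal information from the interior there. Consequently the trace can only be recovered in a weighted $\rmL^2$ whose weight vanishes on $\partial^x_0\calD$, and only locally away from the edge $\overline{\partial^v\calD}\cap\partial^x\calD$ where the test functions must vanish. This intrinsic degeneracy is the reason no classical $\rmH^1$ trace exists for $\rmH^1_\mathrm{hyp}$ when $n\geq 2$, and is precisely what the weighted $\rmL^2_\mathrm{loc}$ formulation accommodates.
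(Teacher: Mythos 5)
Your construction of the trace functional via $I_\psi(x_1)$ is sound as far as it goes, but the step that actually carries the content of the lemma --- the quantitative bound placing the trace in a weighted $\rmL^2$ space --- is asserted, not proved, and your proposed mechanism cannot deliver it. From your own identity, the only estimate available for $\partial_{x_1} I_\psi$ comes from the $\rmH^{-1}$--$\rmH^1_0$ pairing and the tangential transport term, so it necessarily consumes a full $v$-derivative of $\psi$ and $\nabla_{x'}\psi$ as well; with $\psi = v_1\zeta$ this bounds the trace functional by $\rmH^1$-type norms of $\zeta$ in $(x',v)$, not by $\norm{\zeta}_{\rmL^2(K)}$. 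Hence Riesz representation in $\rmL^2$ does not apply, and this purely linear duality argument only identifies the trace as a distribution of negative order on $\partial^x\calD$ --- which is precisely the known obstruction, not the lemma. (Even if your claimed bound held, testing with $\psi = v_1\zeta$ would give $v_1^2\,\tr_x(u)\in\rmL^2(K)$, i.e.\ a weight $\abs{v\cdot\mathbf{n}_x}^4$, weaker than the stated $\abs{v\cdot\mathbf{n}_x}^2$.) The missing idea, which is how the paper (following Silvestre) proves \cref{eq:weaktrace}, is a quadratic test function: for $u$ smooth one pairs $v\cdot\nabla_x u$ with $(v\cdot\mathbf{n}(x))\,u\,\eta(v)$, where $\mathbf{n}(x)$ is a Lipschitz interior extension of the unit normal (this is exactly where $\rmC^{1,1}$ enters, making $v\cdot\nabla_x(v\cdot\mathbf{n}(x))$ bounded). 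The boundary term is then $\int_{\partial^x\calD}\abs{v\cdot\mathbf{n}_x}^2u^2\eta\,\rmd S$, while the interior terms are controlled by $C\norm{u}_{\rmH^1_{\mathrm{hyp}}}^2$, yielding the weighted $\rmL^2$ bound quadratically in $u$; the operator is then extended to all of $\rmH^1_{\mathrm{hyp}}(\calD)$ by density.

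Your limiting procedure is also insufficient on its own: mollifying only in $v$ leaves $u_\epsilon$ merely $\rmL^2$ in $x$, so it neither produces the smoothness in $x$ needed to integrate by parts with a boundary term at $x_1=0$ nor allows passage to the limit in $\int_{\partial^x\calD} v\cdot\mathbf{n}_x\,\tr_x(u_\epsilon)\,\varphi\,\rmd S$. The paper instead uses the convolution--translation in $x$ (shifting inward along $\mathbf{n}(x)$, \cref{thm:contran:converge}) to obtain density in $\rmH^1_{\mathrm{hyp}}(\calD)$ of functions that are $\rmC^1$ up to $\partial^x\calD$, and then controls the boundary integral for $\varphi\in\rmC^1(\overline{\calD})$ with $\varphi|_{\partial^v\calD}=0$ via the elementary observation that $\varphi^2/\eta\in\rmL^\infty$ for $\eta(v)=\min\{1,\mathrm{dist}(v,\partial\calV)\}$, which is what makes \cref{eq:intbypart} meaningful with only the $\abs{v\cdot\mathbf{n}_x}^2$-weighted trace. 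A smaller point: after flattening with a $\rmC^{1,1}$ chart the transport field becomes $(D\Phi(x)v)\cdot\nabla_y$, so the transversal weight is $\nabla\Phi_1\cdot v$ rather than literally $-v_1$; this is repairable, but as written the reduction is not exact.
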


The proof of this lemma is given in \cref{sec:appendix:weaktrace}.
\begin{remark}
    Suppose that a strong trace operator exists, i.e.~$\Tr_x: \rmH^1_\mathrm{hyp}(\calD)\to\rmL^2(\partial^x\calD,\allowbreak \abs{v\cdot\mathbf{n}_x})$, then for $u, w\in\rmH^1_{\mathrm{hyp}}(\calD)$ such
    that $\Tr_v(u), \Tr_v(w)=0$ we can do integration by parts in the following sense
    \begin{equation*}
        \iint_{\calD} u (v\cdot\nabla_x w)\,\rmd x\rmd v = \int_{\partial^x\calD} v\cdot\mathbf{n}_x \Tr_x(u)\Tr_x(w)\,\rmd S - \iint_{\calD} (v\cdot\nabla_x u) w \,\rmd x\rmd v.
    \end{equation*}
    The weak trace is weak in the sense that in the integration by parts formula \cref{eq:intbypart} the function $\varphi$ cannot be taken as a function in $\rmH^1_{\mathrm{hyp}}(\calD)$.
    This means testing against the solution itself is not allowed for \cref{eq:kfp}.
    Nevertheless, there is a renormalization that behaves almost like testing the equation against the solution itself, see \cref{thm:renormal}.
\end{remark}

We first define the notion of a weak solution. The introduction of the product domain is needed to make sense of the space $u$ and $f$ belongs to.
\begin{definition}[Weak solution]\label{def:weaksol}
    Let $\Omega$ be an open set in $\R^{2n}$.
    We say that a function $u$ is a weak solution to $-\L u = f$ in $\Omega$ if, for any product domain $\calD = \calU \times \calV \subset \Omega$, we have
    $u \in \rmL^2(\calU;\rmH^1(\calV))$, $f \in \rmL^2(\calU;\rmH^{-1}(\calV))$ and
    for all $\varphi\in \rmC^1_c( \calD)$ the following holds
    \begin{equation*}
        \iint_{\Omega} \left( (\bfA \nabla_v u) \cdot \nabla_v \varphi -(\mathbf{b}\cdot\nabla_v u) \varphi+ u  (v\cdot\nabla_x \varphi) \right)\, \mathrm{d}\xi
        = \iint_{\Omega} f\varphi\, \mathrm{d}\xi.
    \end{equation*}
    If $f \equiv 0$ and the left-hand side is non-negative whenever $\varphi \geq 0$, we say that $u$ is a weak supersolution. We say that $u$ is a weak subsolution if $-u$ is a weak supersolution.
\end{definition}

Naturally, we define a weak solution to the boundary value problem \cref{eq:bvp2} as a weak solution to the equation in $\calD$ that satisfies the boundary condition in the following sense:

\begin{definition}\label{def:dirichletweaksol}
    Let $\calD\subset\R^{2n}$ satisfy \cref{assump:D0}, where $\partial \calU \in \rmC^{1,1}$. Let $g_1 \in \rmH^1_{\mathrm{hyp}} (\calD)$, $g_2 \in \rmL^2(\partial^x \calD, \abs{v\cdot\mathbf{n}_x})$ and $f\in \rmL^2(\calU;\rmH^{-1}(\calV))$.
    We say that a weak solution $u\in \rmH^1_{\mathrm{hyp}}(\calD)$ to $-\L u = f$ in $\calD$
    is a solution to the boundary value problem \cref{eq:bvp2} if $u - g_1 \in \rmH^1_{\mathrm{hyp},\partial^v}(\calD)$ and $\tr_x(u) = g_2$ on $\partial_+^x \calD$.
\end{definition}

We will present two distinct approaches to establish the existence of a weak solution.
The first one uses the Lions-Lax-Milgram theorem and works for $\calD$ satisfying \cref{assump:D0} where $\partial \calU \in \rmC^{1,1}$.
\begin{theorem}\label{thm:1}
    Let $\calD\subset\R^{2n}$, and let \cref{assump:D0,assump:1,assump:posdiv} hold, where $\partial \calU \in \rmC^{1,1}$.
    If $g_1 \in \rmH^1_{\mathrm{hyp}} (\calD)$, $g_2 \in \rmL^2(\partial^x \calD, \abs{v\cdot\mathbf{n}_x})$ and $f\in \rmL^2(\calU;\rmH^{-1}(\calV))$,
    then there exists a weak solution $u$ to \cref{eq:bvp2}
    in the sense of \cref{def:dirichletweaksol}.
\end{theorem}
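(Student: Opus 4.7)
The plan is to apply the Lions-Lax-Milgram theorem with asymmetric trial and test spaces, exploiting the hypoelliptic structure of $-\L$ together with the weak trace identity \cref{eq:intbypart}. After subtracting the $\partial^v$-data via $u = w + g_1$, one seeks $w \in \rmH^1_{\mathrm{hyp},\partial^v}(\calD)$ satisfying
\[
a(w,\varphi) \;=\; \pair{f,\varphi} + \int_{\partial^x_+\calD}(v\cdot\mathbf{n}_x)\,g_2\,\varphi\,\rmd S \;-\; a(g_1,\varphi) \;=:\; L(\varphi),
\]
for all $\varphi$ in a test space, where
\[
a(w,\varphi) := \iint_{\calD}\left(\bfA\nabla_v w\cdot\nabla_v\varphi - (\mathbf{b}\cdot\nabla_v w)\varphi + w\,(v\cdot\nabla_x\varphi)\right)\rmd\xi.
\]
I take as trial space $H := \rmH^1_{\mathrm{hyp},\partial^v}(\calD)$ and as test space
$V := \{\varphi \in \rmC^1(\overline{\calD}) : \varphi|_{\partial^v\calD} = 0 \text{ and } \varphi|_{\partial^x_-\calD} = 0\}$,
equipped with the norm $\norm{\varphi}_V^2 := \norm{\nabla_v\varphi}_{\rmL^2(\calD)}^2 + \int_{\partial^x_+\calD}\abs{v\cdot\mathbf{n}_x}\varphi^2\,\rmd S$.

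The central step is the coercivity bound $a(\varphi,\varphi) \gtrsim \norm{\varphi}_V^2$ on $V$. The diffusion term contributes $\lambda\norm{\nabla_v\varphi}^2_{\rmL^2}$ by ellipticity of $\bfA$. Writing $(\mathbf{b}\cdot\nabla_v\varphi)\varphi = \tfrac12\mathbf{b}\cdot\nabla_v(\varphi^2)$ and integrating by parts in $v$, which is legal since $\varphi$ vanishes on $\partial^v\calD$, gives $-\iint(\mathbf{b}\cdot\nabla_v\varphi)\varphi = \tfrac12\iint(\nabla_v\cdot\mathbf{b})\varphi^2 \geq 0$ by \cref{assump:posdiv}. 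For the transport piece, applying \cref{thm:weaktrace} with $u = \varphi$ yields $\iint\varphi(v\cdot\nabla_x\varphi) = \tfrac12\int_{\partial^x\calD}(v\cdot\mathbf{n}_x)\varphi^2\,\rmd S = \tfrac12\int_{\partial^x_+\calD}(v\cdot\mathbf{n}_x)\varphi^2\,\rmd S$, where the integrand vanishes identically on $\partial^x_0\calD$ and $\varphi$ vanishes on $\partial^x_-\calD$. Combining these three contributions gives the coercivity with constant $\min(\lambda,\tfrac12)$.

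For continuity of $L$ on $(V,\norm{\cdot}_V)$, Poincaré in $v$ (enabled by $\varphi|_{\partial^v}=0$) controls $\norm{\varphi}_{\rmL^2(\calU;\rmH^1_0(\calV))}$ by $\norm{\nabla_v\varphi}_{\rmL^2}$ and handles $\pair{f,\varphi}$, while Cauchy-Schwarz in the weighted boundary $\rmL^2$ handles the $g_2$ term. For $a(g_1,\varphi)$ the diffusion and drift contributions are immediate, and the delicate transport piece $\iint g_1(v\cdot\nabla_x\varphi)$ is rewritten via \cref{eq:intbypart} as $\int_{\partial^x_+}(v\cdot\mathbf{n}_x)\tr_x(g_1)\varphi\,\rmd S - \pair{v\cdot\nabla_x g_1,\varphi}$; the pairing is dominated by $\norm{g_1}_{\rmH^1_{\mathrm{hyp}}}\norm{\nabla_v\varphi}_{\rmL^2}$, and the boundary integral is regrouped with the $g_2$ term so that only the effective inflow datum $g_2 - \tr_x(g_1)$ appears. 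The Lions-Lax-Milgram theorem then produces $w \in H$ with $a(w,\varphi) = L(\varphi)$ for all $\varphi \in V$. Setting $u := w + g_1$ and testing against $\varphi \in \rmC^1_c(\calD) \subset V$, \cref{def:weaksol} shows that $u$ is a weak solution of $-\L u = f$ in $\calD$, and the equation itself expresses $v\cdot\nabla_x u$ in $\rmL^2(\calU;\rmH^{-1}(\calV))$, so $u \in \rmH^1_{\mathrm{hyp}}(\calD)$. The inflow identity $\tr_x(u) = g_2$ on $\partial^x_+\calD$ is then recovered by substituting \cref{eq:intbypart} into the variational identity for arbitrary $\varphi \in V$: all bulk terms cancel, leaving $\int_{\partial^x_+}(v\cdot\mathbf{n}_x)(\tr_x(u) - g_2)\varphi\,\rmd S = 0$, and varying the trace of $\varphi$ on $\partial^x_+$ gives the equality.

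The principal obstacle is the treatment of the inhomogeneous boundary data, specifically the transport component of $a(g_1,\varphi)$: because $g_1$ belongs only to $\rmH^1_{\mathrm{hyp}}$, its $\partial^x$-trace exists only in the weak sense of \cref{thm:weaktrace}, and the combination $g_2 - \tr_x(g_1)$ must be interpreted at the level of the weak trace rather than as classical boundary values. This is precisely where the $\rmC^{1,1}$ hypothesis on $\partial\calU$ enters, by making \cref{thm:weaktrace} and the identity \cref{eq:intbypart} available; rigorously bounding the $\tr_x(g_1)$ contribution may additionally require a density argument, approximating $g_1$ by smooth extensions with classical traces and passing to the limit via the coercivity bound.
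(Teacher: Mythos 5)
Your proposal follows essentially the same route as the paper: the same bilinear form, coercivity from the ellipticity of $\bfA$, \cref{assump:posdiv}, and the boundary term on $\partial^x_+\calD$ (test functions vanishing on $\partial^x_-\calD$), the Lions-Lax-Milgram theorem with asymmetric spaces, and recovery of the inflow condition $\tr_x(u)=g_2$ on $\partial^x_+\calD$ via the weak trace and the Green's-type identity. One step needs correcting: you declare the trial Hilbert space to be $\rmH^1_{\mathrm{hyp},\partial^v}(\calD)$, but Lions-Lax-Milgram only places the solution in the trial space if the inf-sup condition holds with respect to that norm, and your coercivity bound $a(\varphi,\varphi)\gtrsim\norm{\varphi}_V^2$ yields it only when the test space embeds continuously into the trial space; the quantity $\norm{\varphi}_{\rmH^1_{\mathrm{hyp}}}$, in particular $\norm{v\cdot\nabla_x \varphi}_{\rmL^2(\calU;\rmH^{-1}(\calV))}$, is not controlled by $\norm{\varphi}_V$, so as stated the existence of $w$ in that space is not justified. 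The repair is exactly what the paper does and what your closing sentences already contain: take the trial space $\rmL^2(\calU;\rmH^1_0(\calV))$ (into which the test space does embed, by Poincar\'e), obtain $w$ there, and then recover $v\cdot\nabla_x u\in\rmL^2(\calU;\rmH^{-1}(\calV))$ from the equation itself, so that $u\in\rmH^1_{\mathrm{hyp}}(\calD)$. Finally, the obstacle you flag concerning the $\tr_x(g_1)$ contribution is genuine: the paper dispenses with it via ``W.L.O.G.\ $g_1=0$'', and making that reduction precise requires the same weak-trace bookkeeping (and integrability of $\tr_x(g_1)$ against $\abs{v\cdot\mathbf{n}_x}$ on $\partial^x_+\calD$, or keeping the $g_1$-terms in the functional as you do) that you describe.
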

For the proof, see \cref{sec:weakexist:lions}.

The other approach is via the vanishing viscosity method. This allows us to handle the case when $\partial \calU$ is $\rmC^{0,1}$.
In this case, we need a slightly different definition for weak solutions, which is weaker than the one in~\cite{Zhu22}.
\begin{definition}\label{def:weaksol:c01}
    Let $\calD\subset\R^{2n}$ satisfy \cref{assump:D0}.
    For $f \in \rmL^2(\calU;\rmH^{-1}(\calV))$, we say that $(u,u_\Gamma)$ is a \emph{weak solution pair} to $-\L u = f$ in $\calD$ if $u\in \rmL^2(\calU;\rmH^1(\calV))$ and $u_\Gamma \in \rmL^2_{\mathrm{loc}} (\partial^x \calD, \abs{v\cdot\mathbf{n}_x}^2)$ such that the Green's formula is satisfied,
    i.e.~for all $\varphi\in \rmC^1( \overline{\calD})$ with $\varphi|_{\partial^v \calD} = 0$ we have
    \begin{equation*}
        {\iint_\calD \left((\bfA \nabla_v u) \cdot \nabla_v \varphi -(\mathbf{b}\cdot\nabla_v u) \varphi+ u  (v\cdot\nabla_x \varphi) - f\varphi\right)\, \mathrm{d}x\rmd v
            = \int_{\partial^x \calD} v\cdot\mathbf{n}_x u_{\Gamma}\varphi \, \mathrm{d}S.}
    \end{equation*}
    We say that $(u,u_\Gamma)$ is a solution to the boundary value problem \cref{eq:bvp2} if $u - g_1 \in \rmL^2(\calU; \rmH^1_0(\calV))$ and  $u_{\Gamma} = g_2$ on $\partial^x_+ \calD$.
\end{definition}
It is worth noting that from our renormalization formula, \cref{thm:renormal}, when $\partial \calU$ is $\rmC^{1,1}$, that if $u$ is a weak solution in the sense of \cref{def:weaksol} in $\calD$, then $(u,\tr_x(u))$ is a weak solution pair in the sense of \cref{def:weaksol:c01}.

Following the method developed in~\cite{Zhu22}, we can construct a weak solution using the vanishing viscosity method for the stationary problem.
\begin{theorem} \label{thm:2}
    Let $\calD\subset\R^{2n}$, and let \cref{assump:D0,assump:1,assump:posdiv} hold.
    Then, for $g_1 \in \rmH^1 (\calD)$, $g_2 \in \rmL^2(\partial^x \calD, \abs{v\cdot\mathbf{n}_x})$ and $f\in \rmL^2(\calU;\rmH^{-1}(\calV))$, there exists a weak solution pair $(u, u_\Gamma)$, with $u_\Gamma\in \rmL^2(\partial^x \calD, \abs{v\cdot\mathbf{n}_x})$ such that $(u,u_\Gamma)$ is a solution to \cref{eq:bvp2}.
    Furthermore, there exists a sequence of functions $u_\varepsilon\in \rmH^1(\calD)$ (constructed through the vanishing viscosity method) that converges to the unique limit $u_\varepsilon\to u$ in $\rmL^2(\calU;\rmH^1(\calV))$ and $\Tr(u_\varepsilon)\to u_\Gamma$ in $\rmL^2(\partial^x_-\calD, \abs{v\cdot\mathbf{n}_x})$ respectively.

    If $f=0$ and $\norm{g}_{\rmL^\infty(\partial_\mathrm{hyp}\calD)}<\infty$, then the above constructed weak solution pair $(u,u_\Gamma)$ satisfies the following weak maximum principle
    \begin{equation*}
        \norm{u_{\Gamma}}_{\rmL^\infty(\partial^x_- \calD)} \leq \norm{u}_{\rmL^\infty(\calD)}\leq \norm{g}_{\rmL^\infty(\partial_\mathrm{hyp}\calD)}.
    \end{equation*}
\end{theorem}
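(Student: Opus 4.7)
The plan is to regularize the degenerate operator by adding a small uniformly elliptic perturbation, solve the resulting Dirichlet problem by Lax--Milgram, extract $\varepsilon$-independent estimates, and identify the vanishing viscosity limit as a weak solution pair via the renormalized Green's formula (\cref{thm:renormal}). After approximating $g_2$ by more regular data (with final recovery of the rough case by the comparison principle), construct an extension $G\in \rmH^1(\calD)\cap \rmL^\infty(\calD)$ whose trace equals $g_1$ on $\partial^v\calD$ and equals $g_2$ on $\partial^x\calD$, chosen so that $\norm{G}_{\rmL^\infty(\partial\calD)}\leq \norm{g}_{\rmL^\infty(\partial_{\mathrm{hyp}}\calD)}$ when the latter is finite. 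For each $\varepsilon>0$, solve
\begin{equation*}
    -\varepsilon\Delta_\xi u_\varepsilon-\L u_\varepsilon=f\text{ in }\calD,\qquad u_\varepsilon=G\text{ on }\partial\calD,
\end{equation*}
by Lax--Milgram for $u_\varepsilon-G\in \rmH^1_0(\calD)$. The bilinear form is coercive by \cref{assump:1} (principal part), \cref{assump:posdiv} (the drift contributes $\tfrac{1}{2}\int(\nabla_v\cdot\mathbf{b})w^2\geq 0$ after integration by parts), and by the fact that the skew-symmetric transport term vanishes on the diagonal for $w\in \rmH^1_0(\calD)$.

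Testing against $w_\varepsilon:=u_\varepsilon-G$ and using Poincar\'e in $\calV$ yields the $\varepsilon$-uniform energy bound
\begin{equation*}
    \varepsilon\norm{\nabla_\xi w_\varepsilon}_{\rmL^2(\calD)}^2+\norm{\nabla_v w_\varepsilon}_{\rmL^2(\calD)}^2\lesssim \norm{G}_{\rmH^1(\calD)}^2+\norm{f}_{\rmL^2(\calU;\rmH^{-1}(\calV))}^2,
\end{equation*}
so $u_\varepsilon$ is bounded in $\rmL^2(\calU;\rmH^1(\calV))$. Reading the equation for $v\cdot\nabla_x u_\varepsilon$ in $\rmL^2(\calU;\rmH^{-1}(\calV))$ shows $u_\varepsilon$ is bounded in $\rmH^1_{\mathrm{hyp}}(\calD)$. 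Since $\Tr(u_\varepsilon)|_{\partial^x\calD}=g_2$ by construction, the boundary trace is uniformly bounded in $\rmL^2(\partial^x\calD,\abs{v\cdot\mathbf{n}_x})$. An Aubin--Lions-type compactness argument for $\rmH^1_{\mathrm{hyp}}$ then upgrades weak convergence $u_\varepsilon\rightharpoonup u$ in $\rmL^2(\calU;\rmH^1(\calV))$ to strong convergence in $\rmL^2(\calD)$.

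The main technical obstacle is the strong convergence of traces on $\partial^x_-\calD$ and the identification of $u_\Gamma$. The plan is to apply \cref{thm:renormal} to the difference $u_\varepsilon-u_{\varepsilon'}$, which satisfies the viscous equation with zero data, and exploit the sign of the transport boundary term to deduce that $(\Tr u_\varepsilon)|_{\partial^x_-\calD}$ is Cauchy in $\rmL^2(\partial^x_-\calD,\abs{v\cdot\mathbf{n}_x})$. The limit $u_\Gamma$ then satisfies $u_\Gamma=g_2$ on $\partial^x_+\calD$ because $\Tr(u_\varepsilon)=g_2$ there for every $\varepsilon$, and $u-g_1\in \rmL^2(\calU;\rmH^1_0(\calV))$ on $\partial^v\calD$ is inherited in the limit. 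Passing $\varepsilon\to 0$ in the variational form, using that $\varepsilon\nabla_\xi u_\varepsilon\to 0$ in $\rmL^2$ and the strong trace convergence on $\partial^x_-$, yields the Green's formula of \cref{def:weaksol:c01}, so $(u,u_\Gamma)$ is a weak solution pair solving \cref{eq:bvp2}. The weak comparison principle, itself a consequence of \cref{thm:renormal}, delivers uniqueness of the pair, promoting subsequential convergence to convergence of the full net $u_\varepsilon$.

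For the weak maximum principle with $f=0$, choose the extension $G$ so that $\norm{G}_{\rmL^\infty(\partial\calD)}\leq \norm{g}_{\rmL^\infty(\partial_{\mathrm{hyp}}\calD)}$. The viscous problem has uniformly elliptic principal part and, after integration by parts, a nonnegative zeroth-order coefficient from \cref{assump:posdiv}, so the classical weak maximum principle gives $\norm{u_\varepsilon}_{\rmL^\infty(\calD)}\leq \norm{G}_{\rmL^\infty(\partial\calD)}$ uniformly in $\varepsilon$. Passing to the limit yields $\norm{u}_{\rmL^\infty(\calD)}\leq \norm{g}_{\rmL^\infty(\partial_{\mathrm{hyp}}\calD)}$, and $|\Tr(u_\varepsilon)|\leq \norm{u_\varepsilon}_{\rmL^\infty(\calD)}$ combined with the strong trace convergence on $\partial^x_-\calD$ produces the analogous bound for $u_\Gamma$.
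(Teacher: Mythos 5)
There is a genuine gap, and it sits exactly where the paper's construction is most delicate: the boundary condition you impose on the viscous approximation. You solve $-\varepsilon\Delta_\xi u_\varepsilon-\L u_\varepsilon=f$ with full Dirichlet data $u_\varepsilon=G$ on all of $\partial\calD$, in particular on the outflow part $\partial^x_-\calD$, where the limit problem prescribes nothing (the outflow trace $u_\Gamma$ is an unknown of the problem). With your construction $\Tr(u_\varepsilon)=\Tr(G)$ on $\partial^x_-\calD$ for every $\varepsilon$, so the trace there is constant in $\varepsilon$ and its ``limit'' is the arbitrary extension $\Tr(G)|_{\partial^x_-\calD}$, not the outflow trace of $u$; what actually happens is a boundary layer at $\partial^x_-\calD$, and the identification $\Tr(u_\varepsilon)\to u_\Gamma$ fails. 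Your proposed fix — apply \cref{thm:renormal} to $u_\varepsilon-u_{\varepsilon'}$ — does not repair this: that lemma is proved for weak solutions of the degenerate equation $-\L u=f$ (and under $\partial\calU\in\rmC^{1,1}$), not for the viscous equation, and in any case it cannot produce a nontrivial Cauchy statement for traces that are frozen at $\Tr(G)$. Moreover, to pass to the limit in the Green's formula of \cref{def:weaksol:c01} one must test with $\varphi\in\rmC^1(\overline{\calD})$ not vanishing on $\partial^x\calD$, and then the viscous conormal flux $\varepsilon\,\partial_{\mathbf{n}_x}u_\varepsilon$ appears as a boundary term; the Dirichlet approximation gives no control of this flux (it may blow up as $\varepsilon\to0$), whereas the paper's choice of a \emph{Robin} condition $\varepsilon\,\partial_{\mathbf{n}_x}u_\varepsilon+(v\cdot\mathbf{n}_x)_+u_\varepsilon=(v\cdot\mathbf{n}_x)_+g_2$ on $\partial^x\calD$ (with viscosity only in $x$) is designed precisely so that this flux is replaced by the term that belongs in the Green's formula, the energy estimate controls $\int_{\partial^x\calD}\abs{v\cdot\mathbf{n}_x}u_\varepsilon^2\,\rmd S$, and the differences $u_{\varepsilon_k}-u_{\varepsilon_{k+1}}$ are Cauchy in that weighted boundary norm, which is how the paper obtains $\Tr(u_\varepsilon)\to u_\Gamma$ on $\partial^x_-\calD$ and then simply sets $u_\Gamma:=g_2$ on $\partial^x_+\calD$.

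Two further points undercut the scaffolding around your argument. First, the extension step: $g_2$ is only in $\rmL^2(\partial^x\calD,\abs{v\cdot\mathbf{n}_x})$, which is far from the $\rmH^{1/2}$ regularity needed for an $\rmH^1(\calD)$ extension $G$, and your plan to approximate $g_2$ and recover the general case ``by the comparison principle'' is not available under the hypotheses of the theorem: \cref{assump:D0} only asks $\partial\calU\in\rmC^{0,1}$, and in that setting the paper's comparison principle requires an a priori $\rmL^\infty$ bound on the difference (the paper explicitly leaves uniqueness of possibly unbounded pairs open), so it cannot be used either to pass from regular to rough $g_2$ or to upgrade subsequential convergence to convergence of the full net — in the statement, ``unique limit'' refers to the limit of the constructed Cauchy sequence, which the paper gets by a direct energy estimate on $u_{\varepsilon_k}-u_{\varepsilon_{k+1}}$ (using the telescoping choice $\varepsilon_k=1/k^2$), not from uniqueness of weak solution pairs. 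Second, your maximum-principle step inherits the same defect: bounding $u_\varepsilon$ by $\norm{G}_{\rmL^\infty(\partial\calD)}$ requires prescribing bounded data on $\partial^x_-\calD$, i.e.\ again over-determining the problem; the paper instead proves the $\rmL^\infty$ bound for the Robin problem directly (\cref{thm:epsmax}, following the Gilbarg--Trudinger argument) and passes it to the limit. The existence and energy parts of your plan (Lax--Milgram, coercivity from \cref{assump:1,assump:posdiv}, $\sqrt{\varepsilon}\nabla u_\varepsilon$ bounded) are fine, but the route to the trace pair and to the Green's formula needs the Robin (or an equivalent flux-controlling) boundary condition to work.
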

The construction of the weak solution in this theorem can be found in \cref{thm:epslimit}. The weak maximum principle can be found in \cref{thm:weakmax}.

In order for the Perron method to work for this equation, we need the weak comparison principle which also guarantees the uniqueness of weak solutions.
\begin{theorem}[Comparison principle]\label{thm:comparison}
    Let $\calD\subset\R^{2n}$, and let \cref{assump:1,assump:D0,assump:posdiv} hold, where $\partial \calU \in \rmC^{1,1}$.
    Let $u,w$ be functions such that $-\L u,-\L w\in \rmL^2(\calU; \rmH^{-1}(\calV))$ in the sense of \cref{def:weaksol}.
    If $-\L u \leq -\L w$ in $\calD$, $\tr_x (u) \leq \tr_x (w)$ on $\partial^x_+ \calD$, and $\Tr_v (u) \leq \Tr_v (w)$ on $\partial^v \calD$,
    then $u\leq w$ a.e.~in $\calD$.
    In particular, weak solutions to \cref{eq:bvp2} in the sense of \cref{def:dirichletweaksol} are unique.

    In the case where $\partial \calU$ is $\rmC^{0,1}$, if $(u,u_\Gamma)$, $(w,w_\Gamma)$ are weak solution pairs in the sense of \cref{def:weaksol:c01}, satisfying the same conditions as above, and $u-w \in \rmL^\infty(\calD)$, then we have $u \leq w$ a.e.~in $\calD$.
\end{theorem}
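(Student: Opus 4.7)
The plan is to set $z := u - w$ and reduce the statement to showing $z_+ \equiv 0$ a.e.\ in $\calD$; the uniqueness of weak solutions to \cref{eq:bvp2} then follows by applying the comparison in both directions. By linearity, $z$ is a weak subsolution of $-\L z = 0$ in the sense of \cref{def:weaksol}, with $\tr_x(z) \leq 0$ on $\partial^x_+\calD$ and $\Tr_v(z) \leq 0$ on $\partial^v\calD$. Heuristically one would test the subsolution inequality against $z_+$, but as the remark following \cref{thm:weaktrace} emphasizes, \cref{def:weaksol} only admits $\rmC^1_c(\calD)$ test functions and the weak trace does not extend to tests inside $\rmH^1_{\mathrm{hyp}}(\calD)$. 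The renormalized Green's formula \cref{thm:renormal} is precisely what bypasses this obstruction, so the argument reduces to applying it with the renormalization $F(s) = s_+^2/2$, realized as a limit of $\rmC^2$ convex approximations $F_\varepsilon$ with $F_\varepsilon' \to s_+$ pointwise and $0 \leq F_\varepsilon'' \leq 1$.

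Working through the renormalized identity, the three terms of the weak formulation are handled as follows. The diffusive contribution becomes $\iint_\calD (\bfA \nabla_v z_+) \cdot \nabla_v z_+ \, \rmd x \rmd v \geq \lambda \iint_\calD |\nabla_v z_+|^2 \, \rmd x \rmd v$ by \cref{assump:1}. The drift contribution, after rewriting $(\mathbf{b}\cdot\nabla_v z) z_+ = \mathbf{b}\cdot\nabla_v(z_+^2/2)$ and integrating by parts in $v$, equals $-\frac{1}{2}\iint_\calD (\nabla_v\cdot\mathbf{b}) z_+^2 \, \rmd x \rmd v$ plus a boundary contribution on $\partial^v\calD$; the boundary contribution vanishes since $\Tr_v(z) \leq 0$ forces $\Tr_v(z_+) = 0$, while \cref{assump:posdiv} renders the remaining bulk integral non-positive. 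The transport contribution, by the renormalized identity, equals
\begin{equation*}
    \frac{1}{2}\int_{\partial^x\calD} v\cdot\mathbf{n}_x \, (\tr_x(z)_+)^2 \, \rmd S,
\end{equation*}
in which the hypothesis $\tr_x(z) \leq 0$ on $\partial^x_+\calD$ annihilates the outflow part, the factor $v\cdot\mathbf{n}_x$ is non-positive on $\partial^x_-\calD$, and $\partial^x_0\calD$ contributes nothing. Collecting the three pieces into the subsolution inequality yields
\begin{equation*}
    \lambda \iint_\calD |\nabla_v z_+|^2 \, \rmd x \rmd v \leq 0,
\end{equation*}
hence $\nabla_v z_+ \equiv 0$. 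Since $z_+(x,\cdot) \in \rmH^1_0(\calV)$ for a.e.\ $x$ by the hypothesis on $\partial^v\calD$, Poincar\'e in the velocity variable forces $z_+ \equiv 0$.

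The main technical obstacle is thus packaged into \cref{thm:renormal}: once the renormalized identity is available, the remainder of the argument is a chain-rule computation combined with the sign analysis on $\partial^x\calD$. For the $\rmC^{0,1}$ variant one works instead with the weak solution pairs $(u,u_\Gamma)$, $(w,w_\Gamma)$ of \cref{def:weaksol:c01}, with the role of $\tr_x$ played by the prescribed boundary data $u_\Gamma, w_\Gamma$; here the renormalized Green's formula is not available a priori. The supplementary hypothesis $u - w \in \rmL^\infty(\calD)$ is used precisely to truncate $z_+$ at a uniform level, so that the quantity $F_\varepsilon'(z)$ can be inserted into the $\rmC^1(\overline{\calD})$ test slot of \cref{def:weaksol:c01} against appropriate cutoffs, thereby rendering all boundary integrals against $|v\cdot\mathbf{n}_x|\,\rmd S$ absolutely convergent and permitting the identical sign analysis to close the argument in the limit $\varepsilon \to 0$.
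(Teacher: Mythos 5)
Your treatment of the $\rmC^{1,1}$ case follows the paper's own route (renormalize with a version of $s\mapsto s_+^2/2$ via \cref{thm:renormal} with $\varphi=1$, use \cref{assump:posdiv} for the drift after integrating by parts in $v$, sign analysis of the boundary term, Poincar\'e in the velocity variable), but as written the renormalization you invoke does not satisfy the hypotheses of \cref{thm:renormal}: that lemma requires $\Phi'\in\rmW^{1,\infty}(\R)$, hence bounded, whereas your $F_\varepsilon'$ approximates $s_+$ and is unbounded. This is not purely cosmetic. The weak trace is only known to lie in $\rmL^2_{\mathrm{loc}}(\partial^x\calD,\abs{v\cdot\mathbf{n}_x}^2)$, so with a genuinely quadratic renormalization the boundary term $\int_{\partial^x\calD}v\cdot\mathbf{n}_x\,(\tr_x z)_+^2\,\rmd S$ that you write down is not even known to be finite. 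The paper avoids this by truncating: it takes $\Phi_k$ with $\Phi_k'=\Psi_k$ (the positive part cut at level $k$), so that $\Phi_k'\in\rmW^{1,\infty}(\R)$ and $\Phi_k$ grows only linearly, runs exactly your computation for each fixed $k$, and then lets $k\to\infty$. With that truncation inserted, your first part is correct and essentially identical to the paper's proof, including the use of $\Tr_v(z)\le 0$ to kill the $\partial^v\calD$ terms and the Poincar\'e inequality to pass from $\nabla_v z_+=0$ to $z_+=0$.

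The $\rmC^{0,1}$ part contains a genuine gap. You propose to insert $F_\varepsilon'(z)$, suitably cut off, into the $\rmC^1(\overline{\calD})$ test slot of \cref{def:weaksol:c01}. This is inadmissible: $z=u-w$ lies only in $\rmL^2(\calU;\rmH^1(\calV))$, has no regularity in $x$, and no cutoff or truncation makes $F_\varepsilon'(z)$ a $\rmC^1(\overline{\calD})$ function; this is precisely the obstruction (no testing against functions of the solution) that you yourself quote from the remark after \cref{thm:weaktrace}. Moreover, your claim that the renormalized Green's formula is ``not available a priori'' here is the opposite of how the paper closes this case: by the remark following \cref{thm:renormal}, when $\partial\calU$ is only $\rmC^{0,1}$ and the solution is bounded, the renormalized formula \emph{does} hold (obtained by approximating $\calU$ by $\rmC^{1,1}$ domains so that the convolution-translation argument still applies), with a boundary function $h_\Gamma\in\rmL^\infty(\partial^x\calD)$ in place of $\tr_x(h)$ and with $\Phi\in\rmW^{2,\infty}_{\mathrm{loc}}$ renormalizations allowed. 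The hypothesis $u-w\in\rmL^\infty(\calD)$ is there to activate that remark, not to legitimize direct testing. One should also note that the Green's formula in \cref{def:weaksol:c01} determines the boundary function $\abs{v\cdot\mathbf{n}_x}\,\rmd S$-a.e., so the $h_\Gamma$ produced by the remark coincides with $u_\Gamma-w_\Gamma$ where $v\cdot\mathbf{n}_x\neq 0$; after this identification the sign hypotheses on $\partial^x_+\calD$ transfer and the computation of the first part goes through verbatim.
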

For the proof, refer to \cref{sec:green}, in which we need to use the renormalization technique, see \cref{thm:renormal}, which dates back to DiPerna and Lions,~\cite{DL89}.

We note that we cannot yet exclude the case that a weak solution pair $(u,u_\Gamma)$ is unbounded when $f=0$, $u_{\Gamma}|_{\partial^x_+ \calD} = 0$ and $\partial \calU$ is only $\rmC^{0,1}$. As such we cannot directly apply \cref{thm:comparison} to obtain a uniqueness result. We leave this global boundedness problem as an open question for future work.
\begin{remark}
    The distinction between the two methods utilized in \cref{thm:1,thm:2} lies in the necessity to construct the trace function for the solution obtained through the  function space or the vanishing viscosity approximation.
    Consequently, we only establish that the trace function $\tr_x(u)$ belongs to $\rmL_{\mathrm{loc}}^2(\partial^x \calD, \abs{v\cdot\mathbf{n}_x}^2)$ due to \cref{thm:weaktrace}, whereas the trace function obtained through the vanishing viscosity method (which is constructed through the approximation) lies in $\rmL^2(\partial^x \calD, \abs{v\cdot\mathbf{n}_x})$.
    However, when $\partial \calU \in \rmC^{1,1}$, the solutions constructed in the two different ways coincide, see \cref{thm:comparison}.
    It's important to note that this difference arises due to the absence of a \emph{strong trace theorem} for the function space $\rmH_{\mathrm{hyp}}^1(\calD)$.
    It is an interesting research problem to show that the trace function $\tr_x(u)$ of a weak solution $u$ in the sense of \cref{def:weaksol} in $\calD = \calU \times \calV$ is in $\rmL^2(\partial^x \calD, \abs{v\cdot\mathbf{n}_x})$ under \cref{assump:D0}, which we currently only know for the constructed solution in \cref{thm:2}.
\end{remark}

\subsection{Perron's method} \label{sec:perron}
We consider the problem
\begin{equation}\label{eq:bvp}
    \begin{cases}
        -\mathscr{L} u(x,v)= 0, & \textrm{ in } \Omega\subset\R^{2n}, \\
        \hfill u = g,           & \textrm{ on } \partial\Omega,
    \end{cases}
\end{equation}
where $\L$ is as in \cref{eq:kfp}, satisfying \cref{assump:1,assump:posdiv}.
We begin with defining $\mathrm{K}$-harmonic and $\mathrm{K}$-superharmonic functions.
\begin{definition}\label{def:kharmonic}
    Let $\Omega\subset\R^{2n}$ be an open set.
    We say that a function $u$ is \emph{$\mathrm{K}$-harmonic} if $u$ is a continuous weak solution to $-\L u = 0$ in $\Omega$ in the sense of \cref{def:weaksol}.
\end{definition}

From \cref{thm:2,thm:reg,thm:comparison} we have the following result:
Let $\calD \subset \R^{2n}$ satisfy \cref{assump:D0}, $\partial \calU \in \rmC^{1,1}$. Then for $h\in \rmC(\partial \calD)$, there exists a unique $\mathrm{K}$-harmonic function $u \in \rmC(\overline{\calD})$ such that $u = h$ on $\partial_{\mathrm{hyp}} \calD$, satisfying $\sup_{\overline{\calD}} u \leq \sup_{\partial_{\mathrm{hyp}}\calD} h$.

The above is enough to guarantee that the notion of $\mathrm{K}$-superharmonic functions is well-defined using the comparison principle.
\begin{definition}\label{def:ksuperharmonic}
    Let $\Omega\subset\R^{2n}$ be an open set.
    A function $u: \Omega \to (-\infty,\infty]$ is called K-\emph{superharmonic} if
    \begin{enumerate}
        \item $u$ is lower semi-continuous,
        \item $u$ is finite in a dense subset of $\Omega$,
        \item $u$ satisfies the comparison principle on each product domain $\calD\ssubset \Omega$ as above: That is, for any $\mathrm{K}$-harmonic function $h \in \rmC(\overline{\calD})$ in $\calD$, such that $h\leq u$ on $\partial_{\mathrm{hyp}} \calD$, then $h\leq u$ in the whole $\calD$.
    \end{enumerate}

    A function $w$ is K-\emph{subharmonic} if $-w$ is $\mathrm{K}$-superharmonic.
\end{definition}
We can see that lower semicontinuous weak supersolutions are $\mathrm{K}$-superharmonic by \cref{thm:subisksub} making the class non-empty.

With all these notions we can define the upper and lower Perron solutions respectively.
\begin{definition}\label{def:uppersolution}
    Let $\Omega\subset\R^{2n}$ be an open set and let $g: \partial\Omega\to \R$ be a bounded function.
    A function $u$ is said to belong to the \emph{upper class} $\mathcal{U}_g$ with respect to \cref{eq:bvp} if $u$ is $\mathrm{K}$-superharmonic in $\Omega$, bounded below and satisfies
    \begin{align*}
        \liminf_{\xi \to \xi_0} u(\xi) \geq g(\xi_0)
    \end{align*}
    at each point $\xi_0 \in\partial\Omega$.
    The \emph{upper Perron solution} $\overline{H}_g$ of \cref{eq:bvp} is defined by
    \begin{align*}
        \overline{H}_g (\xi) = \inf\{ u(\xi) : u\in \mathcal{U}_g\}.
    \end{align*}
    The \emph{lower Perron solution} $\underline{H}_g$ is defined analogously.
    Finally, we say that $g$ is \emph{resolutive} with respect to $\Omega$ if $\overline{H}_g = \underline{H}_g$ in $\Omega$.
\end{definition}

\begin{theorem}\label{thm:pwbexist}
    Let $\Omega\subset\R^{2n}$ be a bounded open set, and let \cref{assump:1,assump:posdiv} hold.
    If the boundary data $g:\partial\Omega\to\R$ is bounded, then the upper and lower Perron solutions $\underline{H}_g$ and $\overline{H}_g$ to the equation \cref{eq:bvp} are $\mathrm{K}$-harmonic.
\end{theorem}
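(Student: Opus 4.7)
My plan is to adapt the classical Perron construction to the kinetic setting, following the philosophy of nonlinear parabolic potential theory as in~\cite{KL96}. I focus on $\overline{H}_g$; the claim for $\underline{H}_g$ follows by replacing $g$ with $-g$. Since $g$ is bounded, the constant $M := \sup_{\partial\Omega} g$ is $\mathrm{K}$-harmonic (hence $\mathrm{K}$-superharmonic) and lies in $\mathcal{U}_g$, so $\overline{H}_g \leq M$ and $\mathcal{U}_g \neq \emptyset$. The comparison characterization in \cref{def:ksuperharmonic}, applied to the constant $\inf_{\partial\Omega} g$ on every product subdomain of $\Omega$, forces $u \geq \inf_{\partial\Omega} g$ for every $u \in \mathcal{U}_g$, so $\overline{H}_g$ is uniformly bounded.

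I would next establish two auxiliary lemmas. \emph{Lattice property}: if $u, w \in \mathcal{U}_g$, then $\min\{u,w\} \in \mathcal{U}_g$, because lower semi-continuity is preserved under finite minima and testing the comparison property separately against $u$ and $w$ yields $h \leq \min\{u,w\}$ in the interior whenever $h$ is $\mathrm{K}$-harmonic on a product subdomain and dominated by $\min\{u,w\}$ on the hypoelliptic boundary. \emph{Poisson modification}: for a product domain $\calD = \calU \times \calV \ssubset \Omega$ with $\partial \calU \in \rmC^{1,1}$ and $u \in \mathcal{U}_g$, set
\begin{equation*}
    P_\calD u := \begin{cases} H_u & \textrm{in } \calD,\\ u & \textrm{in } \Omega \setminus \calD,\end{cases}
\end{equation*}
where $H_u$ is the $\mathrm{K}$-harmonic function in $\calD$ with boundary values $u|_{\partial_{\mathrm{hyp}} \calD}$, obtained as the monotone limit of continuous Dirichlet solutions (via \cref{thm:2,thm:comparison}) corresponding to an increasing continuous approximation of $u|_{\partial_{\mathrm{hyp}} \calD}$. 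A pasting argument plus \cref{thm:comparison} shows $P_\calD u \in \mathcal{U}_g$ and $P_\calD u \leq u$ in $\Omega$.

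With these tools, fix $\xi_0 \in \Omega$ and a product subdomain $\calD \ssubset \Omega$ containing $\xi_0$ with $\partial \calU \in \rmC^{1,1}$, and let $\{\xi_j\}_{j\geq 1}$ be a countable dense subset of $\calD$. For each $j$ pick $u_n^{(j)} \in \mathcal{U}_g$ with $u_n^{(j)}(\xi_j) \to \overline{H}_g(\xi_j)$, set $w_n := \min\{u_n^{(1)}, \ldots, u_n^{(n)}\} \in \mathcal{U}_g$, and $W_n := P_\calD w_n \in \mathcal{U}_g$, which is $\mathrm{K}$-harmonic in $\calD$. Since $\overline{H}_g \leq W_n \leq w_n$ and $w_n(\xi_j) \to \overline{H}_g(\xi_j)$, we get $W_n(\xi_j) \to \overline{H}_g(\xi_j)$ for each $j$. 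Uniform boundedness combined with the interior H\"older/Harnack estimates (\cref{thm:reg}) and Arzel\`a--Ascoli yield a subsequence converging locally uniformly to a $\mathrm{K}$-harmonic $W$ on $\calD$ with $W \geq \overline{H}_g$ and $W(\xi_j) = \overline{H}_g(\xi_j)$ for all $j$. Suppose toward contradiction that $W(\xi^*) > \overline{H}_g(\xi^*)$ at some $\xi^* \in \calD$; choose $u^* \in \mathcal{U}_g$ with $u^*(\xi^*) < W(\xi^*)$ and rerun the construction with $\tilde{w}_n := \min\{w_n, u^*\}$ to obtain a $\mathrm{K}$-harmonic limit $\tilde{W} \leq W$ in $\calD$ satisfying $\tilde{W}(\xi_j) = W(\xi_j)$ for all $j$ and $\tilde{W}(\xi^*) \leq u^*(\xi^*) < W(\xi^*)$. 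Continuity and equality on the dense set force $\tilde{W} \equiv W$, a contradiction; therefore $\overline{H}_g \equiv W$ on $\calD$, proving $\overline{H}_g$ is $\mathrm{K}$-harmonic in $\Omega$.

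The principal obstacle is the Poisson modification step, specifically verifying lower semi-continuity of $P_\calD u$ across $\partial \calD \cap \Omega$---most delicate on the outflow part $\partial^x_- \calD$, where no boundary condition is imposed on $H_u$---and constructing $H_u$ for merely lower semi-continuous boundary data using the quantitative weak maximum principle from \cref{thm:2} together with the comparison theorem \cref{thm:comparison}.
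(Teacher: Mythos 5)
Your overall route is the same as the paper's (the KL96-style Perron construction: downward-directedness of $\mathcal{U}_g$, harmonic modification in product subdomains, the convergence lemma, and a dense-set comparison argument at the end, which is fine). The genuine gap is precisely the step you flag but do not close, and as you have literally set it up it is false: with $P_\calD u := u$ on $\Omega\setminus\calD$, hence in particular on $\partial^x_-\calD\cap\Omega$, the pasted function need not be lower semicontinuous there, so it need not be $\mathrm{K}$-superharmonic, and the claim ``a pasting argument plus \cref{thm:comparison} shows $P_\calD u\in\mathcal{U}_g$'' does not hold in general. Concretely, take $n=1$, $\calD=(0,1)\times(1,2)$, let $\Omega$ be a slightly larger product so that all velocities in $\Omega$ are positive, and let $u=\mathbf{1}_{\{x<x_*\}}$ with $0<x_*<1$. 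Since every admissible curve in $\Omega$ has $\dot x=v>0$, a comparison on product subintervals (splitting at $x=x_*$) shows $u$ satisfies \cref{def:ksuperharmonic}, so $u$ is $\mathrm{K}$-superharmonic in $\Omega$. The modification $H_u$ in $\calD$ has boundary data $0$ on $\{x=1\}\times\calV$ and on the lateral boundary for $x\geq x_*$, and by the maximum principle together with propagation of positivity for $1-H_u$, $H_u$ stays bounded away from $1$ near the outflow end $\{x=0\}$ for $v$ away from $\partial\calV$; yet $P_\calD u = u = 1$ at those outflow points, so the liminf from inside $\calD$ is strictly below the assigned value and lower semicontinuity fails on $\partial^x_-\calD$. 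This is the kinetic analogue of the ``top of the cylinder'' issue in parabolic potential theory, and it is exactly why the paper defines the $\mathrm{K}$-harmonic modification as a supremum of continuous $\mathrm{K}$-harmonic minorants rather than by naive pasting with the value $u$ on $\partial\calD$.

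The repair is a genuine argument, not bookkeeping: redefine the modification on $\partial\calD\cap\Omega\setminus\partial_{\mathrm{hyp}}\calD$ as the liminf of $H_u$ from within $\calD$ (equivalently, take the lower semicontinuous envelope of the pasted function), and then verify the comparison property (3) of \cref{def:ksuperharmonic} for test product domains $\calD'=\calU'\times\calV'$ straddling the interface. This verification goes through because near an outflow point the interface $(\partial\calU\cap\calU')\times\calV'$ has $v\cdot\mathbf{n}_x<0$ throughout, hence is not part of the hypoelliptic boundary of the inner piece $(\calU'\cap\calU)\times\calV'$: one first compares in the inner piece, where the test function is dominated by $H_u$ on its hypoelliptic boundary, and then in the outer piece using the $\mathrm{K}$-superharmonicity of $u$. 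Two smaller points: your lower bound ``$u\geq\inf_{\partial\Omega}g$ for $u\in\mathcal{U}_g$ by applying \cref{def:ksuperharmonic} to the constant'' is circular as stated (it presupposes the bound on the hypoelliptic boundary of the subdomain); use instead \cref{thm:ksupsub:compare} with the constant, or simply $\overline{H}_g\geq\underline{H}_g\geq\inf_{\partial\Omega}g$ from \cref{thm:upperlower:compare}. Also, interior compactness should be quoted from \cref{thm:holderint} and the convergence lemma rather than \cref{thm:reg}, which is a boundary statement.
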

However, for the resolutivity, we need to assume slightly better properties on the coefficients of the equation, as it is difficult to construct explicit, weak supersolutions to equations in divergence form.

\begin{assumption}\label{assump:2}
    Let $\abs{\partial_{v_i} a_{ij}}<\nu$ hold for all $i,j=1,\dots,n$.
\end{assumption}
We note that this assumption formally means we can rewrite the equation in non-divergence form with a new vector field $\widetilde{\mathbf{b}} = \mathbf{b} + \nabla_v \mathbf{a}$, which is again bounded.

\begin{theorem}\label{thm:pwbresolutive}
    Let $\Omega\subset\R^{2n}$ be a bounded open set and let \cref{assump:1,assump:posdiv,assump:2} hold.
    If the boundary data $g$ is continuous on $\partial\Omega$ then $g$ is resolutive for the problem \cref{eq:bvp}.
\end{theorem}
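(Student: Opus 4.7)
The plan is the classical Perron-Wiener-Brelot approach: reduce resolutivity for all continuous boundary data to a sup-norm dense subclass, and verify resolutivity on that subclass via a Poisson-modification exhaustion built from the product-domain Dirichlet theory of the previous sections.

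First I would record the elementary lattice properties of $g\mapsto\overline{H}_g,\underline{H}_g$ directly from \cref{def:uppersolution}: monotonicity in $g$, invariance under additive constants $\overline{H}_{g+c}=\overline{H}_g+c$, subadditivity of $\overline{H}$ and superadditivity of $\underline{H}$, and the duality $\overline{H}_{-g}=-\underline{H}_g$. These give the stability estimate $\norm{\overline{H}_g-\overline{H}_{g'}}_{\rmL^\infty(\Omega)}\leq\norm{g-g'}_{\rmL^\infty(\partial\Omega)}$ and the inequality $\overline{H}_{g_1+g_2}-\underline{H}_{g_1+g_2}\leq\sum_i(\overline{H}_{g_i}-\underline{H}_{g_i})$. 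In particular, the bounded resolutive functions form a sup-norm closed linear subspace of $\rmC(\partial\Omega)$. Since $\rmC^\infty(\overline{\Omega})|_{\partial\Omega}$ is uniformly dense in $\rmC(\partial\Omega)$ by Tietze extension and mollification, it suffices to prove resolutivity of $g=\phi|_{\partial\Omega}$ for every $\phi\in\rmC^\infty(\overline{\Omega})$.

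To handle smooth boundary data I would construct a global smooth K-supersolution with positive slack; this is the only place where \cref{assump:2} is used. Rewriting $\L u=\sum_{i,j}a_{ij}\partial_{v_iv_j}u+\widetilde{\mathbf{b}}\cdot\nabla_v u+v\cdot\nabla_x u$ with $\widetilde{\mathbf{b}}:=\mathbf{b}+\nabla_v\cdot\mathbf{A}$ bounded, the function $\psi(v):=C-c_0^{-1}e^{\alpha v_1}$ satisfies $-\L\psi\geq 1$ on $\overline{\Omega}$ and $\psi\geq 0$ provided $\alpha\lambda>\norm{\widetilde{\mathbf{b}}}_{\rmL^\infty}$ and $C,c_0>0$ are chosen suitably. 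Then for $\phi\in\rmC^\infty(\overline{\Omega})$ and $K_\phi:=\norm{\L\phi}_{\rmL^\infty(\Omega)}$, the functions $u^\pm:=\phi\pm K_\phi\psi$ are continuous on $\overline{\Omega}$ with $u^+$ K-superharmonic and $u^-$ K-subharmonic. Writing $\phi=u^+-K_\phi\psi$ and invoking the lattice inequalities, the problem reduces to the key lemma: \emph{for any bounded continuous K-superharmonic function $u$ on $\overline{\Omega}$, the boundary values $u|_{\partial\Omega}$ are resolutive.}

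For the key lemma I would use Poisson modifications along an exhaustion $\{\calD_k=\calU_k\times\calV_k\}$ of $\Omega$ by product subdomains with $\partial\calU_k\in\rmC^{1,1}$, $\overline{\calD_k}\ssubset\calD_{k+1}$ and $\bigcup_k\calD_k=\Omega$. By \cref{thm:2,thm:comparison} and regularity the Dirichlet problem in $\calD_k$ with continuous data $u|_{\partial_{\mathrm{hyp}}\calD_k}$ admits a unique K-harmonic solution $h_k\leq u$ in $\calD_k$. The paste $u_k:=h_k$ on $\calD_k$ and $u_k:=u$ on $\Omega\setminus\calD_k$ is K-superharmonic in $\Omega$ (after lower semicontinuous regularization if necessary), equals $u$ near $\partial\Omega$, so $u_k\in\mathcal{U}_{u|_{\partial\Omega}}$. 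The sequence $\{u_k\}$ is monotone decreasing by comparison in $\calD_{k+1}$, and De~Giorgi-Nash-Moser plus Harnack applied to $\{h_k\}$ on compacta give local uniform convergence $u_k\searrow U$ with $U$ K-harmonic in $\Omega$; standard comparison then identifies $U$ with $\overline{H}_{u|_{\partial\Omega}}$. A symmetric construction starting from K-subharmonic companions of $u$ in the lower class (produced with the aid of the $u^-$ barrier from the previous step) yields the matching increasing limit equal to $\underline{H}_{u|_{\partial\Omega}}$; since both K-harmonic limits coincide with the K-harmonic $h_k$ inside each $\calD_k$, they agree in $\Omega$, giving $\overline{H}_{u|_{\partial\Omega}}=\underline{H}_{u|_{\partial\Omega}}$.

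The main obstacle in the above is the Poisson modification across the outflow boundary $\partial^x_-\calD_k$, where the constructed $u_k$ is only a weak solution pair in the sense of \cref{def:weaksol:c01} and may fail to be classically continuous. The pasting and convergence arguments must therefore be carried out at the level of lower semicontinuous K-superharmonic functions, relying on the weak trace of \cref{thm:weaktrace}, the comparison principle \cref{thm:comparison}, and regularity up to the hypoelliptic boundary. A second delicate point is populating the lower class with enough K-subharmonic companions matching $u$ on the hypoelliptic boundary of each $\calD_k$ for the parallel lower argument, which is where the two-sided role of the barrier $\psi$ is used a second time.
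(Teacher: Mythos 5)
Your global architecture is the same as the paper's: approximate $g$ uniformly, use linearity/closedness of the resolutive class (the paper's \cref{thm:pwblinearclass} and the elementary lemma), and reduce to boundary restrictions of differences of continuous K-sub/superharmonic functions built from an exponential auxiliary function, which is exactly where \cref{assump:2} enters (your $\psi(v)=C-c_0^{-1}e^{\alpha v_1}$ plays the role of the paper's $\widehat c\,e^{\delta v\cdot(x+\mathbf{q})}$ in \cref{thm:pwbapprox}, and the computation is correct). The gap is in your ``key lemma'' that boundary values of a bounded continuous K-superharmonic function are resolutive. First, the proposed exhaustion is impossible in general: an increasing union of product domains $\calU_k\times\calV_k$ is again a product domain, so an arbitrary bounded open set $\Omega$ (e.g.\ a ball in $\R^{2n}$) admits no exhaustion of the kind you assume, and the paper's product-domain Dirichlet theory (\cref{thm:2,thm:comparison,thm:reg}) does not apply on whatever replacement domains you would use. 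Second, even granting an exhaustion, the lower half of the argument is missing: you never construct K-subharmonic members of $\mathcal{L}_{u|_{\partial\Omega}}$ whose increasing limit matches the decreasing limit $U$ of the Poisson modifications (your $u^-=\phi-K_\phi\psi$ has the wrong boundary values), and the assertion that ``both K-harmonic limits coincide with the K-harmonic $h_k$ inside each $\calD_k$'' is false — successive modifications over larger domains push the upper limit strictly below $h_k$ in general. You flag this as a ``delicate point,'' but it is precisely the content of the lemma, so the proof is incomplete as it stands.

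The repair is short and needs no exhaustion: this is the paper's \cref{thm:pwbsubresolutive}. If $u$ is continuous on $\overline{\Omega}$, K-superharmonic, with boundary values $g=u|_{\partial\Omega}$, then $u\in\mathcal{U}_g$, so $\overline{H}_g\leq u$ in $\Omega$ and hence $\limsup_{\xi\to\xi_0}\overline{H}_g(\xi)\leq g(\xi_0)$ for every $\xi_0\in\partial\Omega$. Since $\overline{H}_g$ is K-harmonic by \cref{thm:pwbexist}, it is in particular K-subharmonic and bounded, so $\overline{H}_g\in\mathcal{L}_g$ and therefore $\overline{H}_g\leq\underline{H}_g$; combined with \cref{thm:upperlower:compare} this gives resolutivity of $u|_{\partial\Omega}$ directly. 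Substituting this argument for your exhaustion step (and keeping your density, linearity, and slack-supersolution construction, which are sound) yields a complete proof essentially identical to the paper's.
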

This theorem follows from the fact that Perron solutions form a linear class, and resolutivity is thus preserved (\cref{thm:pwblinearclass}).
The complete proof is given in \cref{sec:theoremresolutive}.

\begin{definition}
    We call a boundary point $\xi_0\in\partial\Omega$ K-regular if
    \begin{equation*}
        \lim_{\xi\to\xi_0} \overline{H}_g (\xi) = g(\xi_0)
    \end{equation*}
    whenever $g:\partial\Omega\to\R$ is continuous.
\end{definition}

In \cref{sec:barrier} we provide examples of regular boundaries of certain sets that are velocity symmetric (symmetric in $v$), see \cref{exp:barrier:2}.
In particular, we can show a ball $B_r(0)\subset\R^{2n}$ is everywhere regular. This is new for the stationary Kramers-Fokker-Planck equation with general coefficients. In the case of constant coefficients, in the ultraparabolic setting, there is even a Wiener-Landis criterion, see~\cite{KLT18}.

\section{Preliminaries}\label{sec:prelim}
In this section we list some preliminary results that will be used in the proofs of the main theorems.

\subsection{Harnack's inequality and Hölder continuity}\label{sec:reg}
In this section, we recall some recent developments concerning the regularity of solutions to the kinetic Fokker-Planck equations.
We focus on the evolutionary equation as the regularity problem for the stationary problem is a straightforward consequence of the evolutionary one.

Specifically, in this section we consider the evolutionary equation
\begin{equation} \label{eq:time}
    \partial_t u - \L u = f,
\end{equation}
where $f\in \rmL^2(\calU_T;\rmH^{-1}(\calV))$, and \cref{assump:1} holds.
Throughout this section we let $\calD$ satisfy \cref{assump:D0}, and define the time cylinder
\begin{equation*}
    \calD_T := (0,T)\times\calD,\quad\partial_\mathrm{kin} \calD_T:=\left((0,T]\times\partial_\mathrm{hyp} \calD\right)\cup\left(\{0\}\times\calD\right),
\end{equation*}
Now we define a renormalized subsolution as follows.

\begin{definition}[\cite{GM22}] \label{def:rensub}
    A function $u\in \rmL^2(\calU_T;\allowbreak\rmH^1(\calV)) \cap \rmC([0,T];\rmL^2(\calD))$ is a \emph{renormalized subsolution} of $\partial_t u -\L u = f$, for $f \in \rmL^2(\calU_T;\rmH^{-1}(\calV))$, if for all $\Phi:\R \to \R$, such that $\Phi$ is a convex, non-decreasing function satisfying $\Phi' \in \rmW^{1,\infty}$, and for all $\varphi\in \rmC^1_c (\calD_T)$ satisfying $\varphi\geq0$ it holds
    \begin{multline}\label{eq:renormalsubtime*}
        \iiint_{\calD_T}-\Phi(u)\partial_t \varphi + \left(\bfA\nabla_v \Phi(u)\right)\cdot\nabla_v \varphi - \left(\mathbf{b}\cdot\nabla_v \Phi(u)\right) \varphi + \Phi(u) v\cdot\nabla_x \varphi\,\mathrm{d}z \\
        \leq
        \iiint_{\calD_T} f\Phi'(u) \varphi \,\mathrm{d}z .
    \end{multline}
\end{definition}
The family of equations \cref{eq:time} is invariant under the Galilean translation, i.e.~for $z_0,z \in \R^{2n+1}$ we define
\begin{equation}\label{eq:galilean}
    z_0\circ z = (t_0 +t, x_0 + x -t v_0, v_0 +v),
\end{equation}
and the dilation, for some $r>0$ that
\begin{equation*}
    z = (t, x, v)\to rz:= (r^2t, r^3 x, rv).
\end{equation*}
Let $Q_1 = Q_1(0):=\{-1<t <  0, \abs{x}<1, \abs{v}<1\}$ be a cube in $\R^{2n+1}$, using the invariance of the equation we define for $z_0\in \R^{1+2n}$ and $r>0$ the cube
\begin{equation*}
    Q_r(z_0):= z_0\circ rQ_1 = \{-r^2<t-t_0 < 0,\,\abs{x - x_0 + (t-t_0)v_0}< r^3,\, \abs{v-v_0}<r\},
\end{equation*}
and we write for simplicity $Q_r(0) = Q_r$.

\begin{lemma}[Harnack inequality~\cite{GM22}]\label{thm:harnacktime}
    Any non-negative renormalized weak solution $u$ to $\partial_t u - \L u = f$, satisfies the following Harnack inequality
    \begin{equation}\label{eq:harnack}
        \sup_{\widetilde{Q}^-_{r_0/4}} u \leq C(\lambda,\Lambda) \left(\inf_{Q_{r_0/4} } u + \norm{f}_{\rmL^\infty(Q_1)}\right)
    \end{equation}
    where $r_0 = 1/20$ and $\widetilde{Q}^-_{r_0/4} := Q_{r_0/4}(-19r_0^2/8, 0, 0)$.
\end{lemma}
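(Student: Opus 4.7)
The plan is to follow the De Giorgi--Nash--Moser strategy adapted to the kinetic setting, essentially the path laid out by Golse--Imbert--Mouhot--Vasseur and refined in~\cite{GM22}. The full Harnack inequality is obtained by combining two one-sided statements: (a) a local boundedness estimate $\sup_{Q_{r_0/2}} u \lesssim \|u\|_{\rmL^2(Q_1)} + \|f\|_{\rmL^\infty}$ valid for non-negative renormalized subsolutions, and (b) a weak Harnack inequality $\|u\|_{\rmL^{p_0}(\widetilde{Q}^-)} \lesssim \inf_{Q_{r_0/4}} u + \|f\|_{\rmL^\infty}$ for some $p_0>0$, valid for non-negative supersolutions. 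The chaining of these two with a cylinder in between gives the stated result, and Galilean invariance \cref{eq:galilean} together with the parabolic scaling lets us reduce everything to a normalized configuration.

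For step (a), the plan is a Moser iteration executed on the renormalized inequality \cref{eq:renormalsubtime*}: choosing $\Phi(u)=u_+^p$ (regularized) and a cut-off $\varphi$ adapted to nested cylinders produces an energy (Caccioppoli) estimate of the form $\|\nabla_v u_+^{p}\|_{\rmL^2}^2 \lesssim p\,\|u_+^{p}\|_{\rmL^2(\text{larger cylinder})}^2 + (\text{lower-order})$. This only controls $v$-derivatives, so the key extra ingredient is a \emph{gain of integrability} in $(t,x)$ coming from the transport structure $\partial_t + v\cdot \nabla_x$: one uses a kinetic averaging/hypoelliptic smoothing lemma (in the spirit of the Bouchut or Bouchut--Desvillettes estimate) to upgrade $\rmL^2$ bounds on $u$ and its $v$-gradient into $\rmL^{2+\delta}$ bounds on $u$ itself. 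Iterating in the exponent $p$ along geometric sequences of cylinders yields the $\rmL^\infty$ bound.

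Step (b) is the more delicate half. I would first establish a log-estimate: taking $\Phi(u) = -\log(u+\varepsilon)$ in the renormalized inequality (for supersolutions, $-u$ being subsolution of the dual problem) shows $\log u$ has a BMO-type spread in kinetic cylinders, which upgrades to an $\rmL^{p_0}$ control via a John--Nirenberg / Bombieri--Giusti argument on the Galilean-invariant cubes $Q_r(z_0)$. In parallel one needs a \emph{measure-to-pointwise} (intermediate value) lemma: if $u\ge 1$ on a large measure subset of a cylinder, then $u \ge \eta$ on a slightly shifted cylinder. In the kinetic context this is the hardest ingredient, because the natural covering is not translation-invariant in the Euclidean sense — one must cover along the characteristics of the transport field, using the Galilean group and chained cylinders $\widetilde{Q}^-$ shifted backward in time, which explains the appearance of the specific base point $(-19r_0^2/8,0,0)$ in the statement.

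The main obstacle is precisely this expansion-of-positivity / covering step: unlike the parabolic case there is no ellipticity in $x$, so positivity must propagate from velocity diffusion into the position variable through the transport term, and the geometry of cylinders has to respect both the parabolic scaling $(t,x,v)\mapsto(r^2 t, r^3 x, r v)$ and the drift $x\mapsto x-tv_0$. Once the weak Harnack and local boundedness are established in properly aligned cylinders, the final inequality \cref{eq:harnack} follows by a standard chaining argument, and the $\|f\|_{\rmL^\infty(Q_1)}$ term is absorbed throughout by applying the estimates to $u + \|f\|_{\rmL^\infty}(T+1)$ or a similar translate that turns the inhomogeneous problem into a homogeneous one up to harmless constants.
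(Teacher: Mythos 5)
You should first note that the paper itself contains no proof of \cref{thm:harnacktime}: the lemma is imported verbatim from~\cite{GM22} (with the specific constants $r_0=1/20$ and the shifted cylinder $\widetilde{Q}^-_{r_0/4}=Q_{r_0/4}(-19r_0^2/8,0,0)$ coming from that reference), so your outline is being measured against the cited literature rather than against an argument in this paper. As a roadmap it is faithful: the proofs in~\cite{GIMV19} and the quantitative refinement~\cite{GM22} do rest on (a) an $\rmL^2\to\rmL^\infty$ estimate for subsolutions, obtained from a Caccioppoli-type inequality extracted from the renormalized formulation together with a hypoelliptic gain of integrability in $(t,x)$ via velocity averaging, and (b) a weak Harnack inequality for supersolutions built from a logarithmic estimate, an intermediate-value/expansion-of-positivity lemma whose geometry follows the characteristics of $\partial_t+v\cdot\nabla_x$, and a covering argument on Galilean-translated cylinders; the two halves are then chained exactly as you indicate, with $\norm{f}_{\rmL^\infty(Q_1)}$ absorbed by an affine-in-time shift.

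As a proof, however, your text has a genuine gap: every one of the load-bearing ingredients is named but not established. The averaging lemma upgrading $\rmL^2$ control of $u$ and $\nabla_v u$ to $\rmL^{2+\delta}$ integrability, the measure-to-pointwise lemma (proved in~\cite{GIMV19} by a compactness/contradiction argument and in~\cite{GM22} by a quantitative trajectory-based construction), and the John--Nirenberg/ink-spots-type covering adapted to the non-Euclidean cylinders $Q_r(z_0)$ are each substantial theorems occupying most of the cited papers, and none is stated precisely, let alone proved, in your proposal; in particular you identify the expansion of positivity as the hardest step without supplying the mechanism by which positivity gained from the velocity diffusion is transported in $x$, which is exactly where a naive adaptation of the elliptic or parabolic argument fails. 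So the proposal should be read as a correct description of the strategy of~\cite{GIMV19,GM22}, not as a self-contained proof; within the present paper the appropriate resolution is the one the authors take, namely to quote the result from~\cite{GM22}.
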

A direct consequence is the interior H\"older regularity.
\begin{lemma}[Interior H\"older regularity~\cite{GM22}]\label{thm:holderint}
    There is an $\alpha\in (0,1)$ depending on $\lambda, \Lambda$ and $\norm{f}_{L^\infty}$ such that any renormalized weak solution $u$  of $\partial_t u - \L u = f$ in $Q_2$  satisfies
    \begin{equation*}
        \abs{u(z_1) - u(z_2)} \lesssim (1+\norm{f}_{L^\infty(Q_2)})(\norm{u}_{\rmL^2(Q_2)}+ \norm{f}_{L^\infty(Q_2)}) \abs{z_1 -z_2}^\alpha.
    \end{equation*}
\end{lemma}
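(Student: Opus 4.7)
The plan is the classical Krylov--Safonov / De Giorgi oscillation-decay argument, now carried out in the Galilean geometry of the kinetic equation: iterate \cref{thm:harnacktime} on a sequence of shrinking cubes based at an arbitrary point to obtain geometric decay of the oscillation, then sum to produce a Hölder modulus in the natural quasi-distance.

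First I would upgrade the stated Harnack inequality to arbitrary base point $z_0$ and radius $r$. Using the Galilean translation $z \mapsto z_0 \circ z$ from \cref{eq:galilean} and the parabolic dilation $z \mapsto rz$, the equation $\partial_t u - \L u = f$ is preserved modulo rescaling $f$ by $r^2$ and the coefficients by maps within the same ellipticity class. Thus for every cube $Q_r(z_0) \subset Q_2$ one has
\begin{equation*}
    \sup_{\widetilde{Q}^-_{r_0 r/4}(z_0)} u \;\leq\; C\bigl(\inf_{Q_{r_0 r/4}(z_0)} u + r^2 \norm{f}_{\rmL^\infty(Q_2)}\bigr),
\end{equation*}
with $C$ the same constant as in \cref{thm:harnacktime}. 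Applying this to $M_r - u$ and $u - m_r$, where $M_r = \sup_{Q_r(z_0)} u$ and $m_r = \inf_{Q_r(z_0)} u$ (both non-negative renormalized solutions with right-hand side $\pm f$), the standard dichotomy on whether $u$ exceeds $(M_r+m_r)/2$ at an appropriate point of the sup-cube yields a fixed $\rho \in (0,1)$ and $\theta \in (0,1)$ such that
\begin{equation*}
    \operatorname*{osc}_{Q_{\rho r}(z_0)} u \;\leq\; \theta \operatorname*{osc}_{Q_r(z_0)} u + C\, r^2 \norm{f}_{\rmL^\infty(Q_2)}.
\end{equation*}

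Iterating this bound on the geometric sequence $r_k = \rho^k r$ and summing the geometric-plus-polynomial error, one obtains $\operatorname{osc}_{Q_{r_k}(z_0)} u \lesssim \rho^{k\alpha}\bigl(\operatorname{osc}_{Q_r(z_0)} u + \norm{f}_{\rmL^\infty(Q_2)}\bigr)$ with $\alpha = \log \theta / \log \rho \in (0,1)$. Choosing $r$ of order one so that $Q_r(z_0) \subset Q_2$ for every $z_0$ in a slightly smaller cube, and controlling $\operatorname{osc}_{Q_r(z_0)} u$ by the $\rmL^\infty$-to-$\rmL^2$ bound that comes out of the same Harnack chain (of order $\norm{u}_{\rmL^2(Q_2)} + \norm{f}_{\rmL^\infty(Q_2)}$), gives Hölder continuity with respect to the Galilean quasi-metric. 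Finally, on any bounded set this quasi-metric is comparable to a positive power of the Euclidean distance, so after possibly decreasing $\alpha$ the claimed estimate follows.

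The main obstacle is the asymmetry inherent in \cref{thm:harnacktime}: the supremum cube $\widetilde{Q}^-_{r_0/4}$ and the infimum cube $Q_{r_0/4}$ are not concentric, so the dichotomy that converts two one-sided Harnack inequalities into a genuine two-sided oscillation decrement on a common smaller cube requires a careful geometric setup. Verifying that the shifted cubes stay inside $Q_r(z_0)$ after each rescaling, and that the iteration still reaches every $z_0$ in the interior, is the technical heart of the argument, and is exactly what is carried out in detail in~\cite{GM22}.
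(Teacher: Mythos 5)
Your overall route --- transport the Harnack inequality to an arbitrary base point and scale via the Galilean translation \cref{eq:galilean} and the kinetic dilation, apply it to $M_r-u$ and $u-m_r$, deduce a geometric oscillation decay, iterate, and finally pass from the kinetic quasi-distance to the Euclidean one at the cost of lowering $\alpha$ --- is exactly the standard argument behind the paper's citation of~\cite{GM22}; the paper gives no independent proof of \cref{thm:holderint}, so this is the right reconstruction. One remark: the asymmetry of \cref{thm:harnacktime} that you single out as the technical heart is in fact harmless and requires no dichotomy. Adding the two inequalities
$\sup_{\widetilde{Q}^-}(M_r-u)\leq C\bigl(\inf_{Q}(M_r-u)+r^2\norm{f}_{\rmL^\infty}\bigr)$ and
$\sup_{\widetilde{Q}^-}(u-m_r)\leq C\bigl(\inf_{Q}(u-m_r)+r^2\norm{f}_{\rmL^\infty}\bigr)$,
and using that the left-hand sides sum to at least $M_r-m_r$, gives
$\operatorname{osc}_{Q_{r_0r/4}(z_0)}u\leq(1-\tfrac1C)\operatorname{osc}_{Q_r(z_0)}u+2r^2\norm{f}_{\rmL^\infty}$ directly; since $z_0$ is the top-time center of the inf-cube $Q_{r_0r/4}(z_0)$, the iteration stays anchored at $z_0$ without any further geometric bookkeeping.

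The one genuine gap is the final reduction to the $\rmL^2$ norm. The oscillation-decay iteration, run from a starting cube of unit size, controls the H\"older seminorm by $\operatorname{osc}_{Q_r(z_0)}u+\norm{f}_{\rmL^\infty}$, i.e.\ by $\norm{u}_{\rmL^\infty}+\norm{f}_{\rmL^\infty}$; your claim that the needed $\rmL^\infty$-to-$\rmL^2$ bound ``comes out of the same Harnack chain'' does not hold, because the Harnack inequality of \cref{thm:harnacktime} applies only to non-negative solutions and cannot, by itself, produce an interior supremum bound for a sign-changing solution in terms of its $\rmL^2$ norm. That step is a separate ingredient --- the local boundedness (first De Giorgi / gain-of-integrability) lemma, proved in~\cite{GM22} by a De Giorgi--Moser type iteration --- and it is precisely what converts your estimate into the stated one with the factor $\norm{u}_{\rmL^2(Q_2)}+\norm{f}_{\rmL^\infty(Q_2)}$. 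With that lemma invoked explicitly, your argument is complete and matches the cited proof.
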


\subsection{Strong maximum principle}\label{sec:strongmax}

As in for instance~\cite{AP20} we define the vector fields:
\begin{equation}\label{eq:vfield}
    V_i(t,x,v) = \partial_{v_i},\textrm{ and } Y(t,x,v) = v\cdot\nabla_x -\partial_t.
\end{equation}
\begin{definition}\label{def:admissiblecurve}
    We say that a curve $\gamma : [0, T] \to \R^{2n+1}$ is \emph{admissible} if it is absolutely continuous and
    \begin{equation*}
        \dot{\gamma}(s) = \sum_{i=1}^n \omega_i (s) V_i (\gamma(s)) + Y(\gamma(s))
    \end{equation*}
    for almost every $s\in[0,T]$ and with $\omega_1,\omega_2,\dots,\omega_n\in \rmL^1(0,T)$.
\end{definition}

We need to define the counterpart of the admissible curve and the attainable set for the stationary equation.
Given an open set $\Omega\subset\R^{2n}$, we consider the time cylinder $\Omega_{T_0} = \Omega\times(0,T_0)$ for some $T_0>0$.
For any admissible curve $\gamma\subset \Omega_{T_0}$ satisfying \cref{def:admissiblecurve} we define $\eta(s) = P(\gamma(s))\subset\Omega$ where
$P:\R^{2n+1}\to\R^{2n}$ is the projection operator $P(t,x,v) = (x, v)$.
Indeed, one can verify that if $V_i = \partial_{v_i}$ and $Y_S = v\cdot\nabla_x$ then
\begin{equation}\label{eq:stationarycurve}
    \dot{\eta}(s) = \sum_{i=1}^n \omega_i(s) V_i(\eta(s)) + Y_S(\eta(s)),
\end{equation}
where $\omega_i(s)$ are the same as in \cref{def:admissiblecurve}.
Then for $\xi_0,\xi\subset\Omega$, we say $\xi_0\prec \xi$ in $\Omega$ if there is an admissible curve $\eta\subset\Omega$ such that $\eta(0) = \xi_0$ and $\eta(T) =\xi$.
Now we define the attainable set for a point $\xi_0\in\Omega$ as follows
\begin{equation*}
    \mathcal{A}_{\xi_0} (\Omega) = \left\{
    z \in \Omega : \xi_0 \prec \xi
    \right\}.
\end{equation*}
It is easy to see that the strong maximum principle is valid in the stationary case by projection.
\begin{corollary}[Strong maximum principle~\cite{AP20}]\label{thm:strong}
    Let $\Omega\subset\R^{2n}$ be any open set.
    Let $u\geq 0$ be a weak solution to $\L u =0$ in $\Omega$.
    If $u(\xi_0)=0$ for some $\xi_0\in\Omega$,
    then $u(\xi)=0$ for all $\xi\in\overline{\mathcal{A}_{\xi_0}(\Omega)}$.
\end{corollary}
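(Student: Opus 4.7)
The plan is to reduce the stationary strong maximum principle to the evolutionary one cited from AP20 by viewing a stationary solution as a time-independent solution of \cref{eq:time}, and then transferring the propagation-of-zero property using the projection $P(t,x,v)=(x,v)$ that links the two notions of admissible curve.

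First I would lift the solution to the time-dependent setting. For any $T>0$, set $\tilde{u}(t,\xi):=u(\xi)$ on $\Omega_T := (0,T)\times\Omega$. Since $u\geq 0$ is a weak solution to $\L u =0$ in $\Omega$ and $\partial_t \tilde{u}=0$, it follows directly from \cref{def:weaksol} that $\tilde{u}$ is a non-negative weak solution of $\partial_t \tilde{u} - \L \tilde{u} = 0$ in $\Omega_T$. Fixing any $t_0\in(0,T)$, the hypothesis $u(\xi_0)=0$ gives $\tilde{u}(t_0,\xi_0)=0$, so the evolutionary strong maximum principle from AP20 (see the discussion in \cref{sec:strongmax}) yields $\tilde{u}\equiv 0$ on $\mathcal{A}_{(t_0,\xi_0)}(\Omega_T)$ in the time-dependent sense of \cref{def:admissiblecurve}.

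Next I would project the attainable set back. Take any $\xi\in\mathcal{A}_{\xi_0}(\Omega)$ and fix an admissible curve $\eta:[0,S]\to\Omega$ with $\eta(0)=\xi_0$, $\eta(S)=\xi$ satisfying \cref{eq:stationarycurve} with coefficients $\omega_i\in \rmL^1(0,S)$. Choose $T>S$ and $t_0\in(S,T)$, and define the lift
\begin{equation*}
    \gamma(s) := (t_0 - s,\, \eta(s)),\qquad s\in[0,S],
\end{equation*}
which lies in $\Omega_T$. Since $Y=v\cdot\nabla_x-\partial_t$ from \cref{eq:vfield}, a direct differentiation gives
\begin{equation*}
    \dot{\gamma}(s) = (-1,\dot{\eta}(s)) = \sum_{i=1}^n \omega_i(s) V_i(\gamma(s)) + Y(\gamma(s)),
\end{equation*}
so $\gamma$ is admissible in the sense of \cref{def:admissiblecurve} with $\gamma(0)=(t_0,\xi_0)$ and $\gamma(S)=(t_0-S,\xi)$. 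Consequently $(t_0-S,\xi)\in\mathcal{A}_{(t_0,\xi_0)}(\Omega_T)$, and the previous step gives $u(\xi)=\tilde{u}(t_0-S,\xi)=0$. Thus $u\equiv 0$ on $\mathcal{A}_{\xi_0}(\Omega)$.

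Finally, to extend to the closure $\overline{\mathcal{A}_{\xi_0}(\Omega)}$, I would invoke the interior Hölder regularity \cref{thm:holderint} (applied to the stationary problem, which is a special case of the evolutionary one): since $u$ is continuous in $\Omega$, the set $\{u=0\}\cap\Omega$ is relatively closed in $\Omega$, and the identity $u\equiv 0$ on $\mathcal{A}_{\xi_0}(\Omega)\subset\Omega$ passes to its closure within $\Omega$. There is no substantive obstacle: the only care needed is to choose $T$ and $t_0$ larger than the parameter length $S$ of $\eta$ so that the lifted curve $\gamma$ stays in $\Omega_T$, and this is precisely why the statement is labeled a \emph{Corollary} — the correspondence between \cref{def:admissiblecurve} and \cref{eq:stationarycurve} makes the stationary case an immediate projection of the evolutionary one.
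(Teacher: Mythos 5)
Your proposal is correct and follows essentially the same route the paper intends: the paper simply asserts that the stationary case follows ``by projection'' from the time-dependent strong maximum principle of~\cite{AP20}, using the correspondence between \cref{def:admissiblecurve} and \cref{eq:stationarycurve}, and your lift $\gamma(s)=(t_0-s,\eta(s))$ of a stationary admissible curve (together with viewing $u$ as a time-independent weak solution of \cref{eq:time} and using continuity from \cref{thm:holderint} to pass to the closure inside $\Omega$) is exactly that correspondence spelled out.
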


\begin{remark}\label{rmk:nonempty}
    We remark that if $\Omega$ is bounded, then $\overline{\mathcal{A}_{\xi_0}}\cap \partial\Omega \neq\emptyset$.
    For example, given any point $\xi_0=(x_0, v_0)$ we can take all $\omega_i(s)=0$ for $i=1,\dots,n$ in \cref{eq:stationarycurve}, then the curve $\gamma(s)$ can be extended to infinity as $s$ increases hence intersects with $\partial\Omega$.

    We also remark that if $\calD$ satisfies \cref{assump:D0}, then $\overline{\mathcal{A}_{\xi_0}} \cap (\partial\calD \setminus \partial_{\textrm{hyp}} \calD) = \emptyset$. This is easy to verify since if we consider an admissible curve $\gamma$ that intersects $\partial\calD$ at a point $\xi_1$, then if $\xi_1 \in \partial^x \calD$ we see that $0 \leq \dot{\gamma} \cdot \mathbf{n} = v \cdot \mathbf{n}_x$ which implies that $\xi_1 \in \partial^x_+ \calD \cap \partial^x_0 \calD$.
\end{remark}

\section{Renormalized Green's formula and the proof of \cref{thm:comparison}}\label{sec:green}

In this section we provide a constructive proof of the density of smooth functions in $\rmH^1_{\mathrm{hyp}}(\calD)$, using the convolution-translation. We provide it here as it gives us a way to understand explicitly what the weak trace is when we have a product domain such that $\partial \calU$ is $\rmC^{1,1}$. Once this is established we can prove the renormalized Green's formula and the comparison principle (\cref{thm:comparison}). We end this section with a necessary lemma concerning the boundary regularity.

First, we record the following version of the Poincaré inequality.
\begin{lemma}[Poincaré inequality]
    Let $\calD\subset\R^{2n}$ satisfy \cref{assump:D0}.
    There exists a constant $C_P = C_P (\calD)$ such that for all $u \in \rmL^2(\calU; \rmH^1_0(\calV))$ we have
    \begin{equation}\label{ineq:poincare}
        \norm{u}_{\rmL^2(\calD)} \leq C_P \norm{\nabla_v u}_{\rmL^2 (\calD)}.
    \end{equation}
\end{lemma}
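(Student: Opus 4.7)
The plan is to reduce to the classical scalar Poincaré inequality on the bounded Lipschitz domain $\calV$ by slicing in the $x$ variable. Since $u \in \rmL^2(\calU; \rmH^1_0(\calV))$, by definition of the Bochner space the slice $u(x,\cdot)$ belongs to $\rmH^1_0(\calV)$ for almost every $x \in \calU$, and moreover the maps $x \mapsto \|u(x,\cdot)\|_{\rmL^2(\calV)}^2$ and $x \mapsto \|\nabla_v u(x,\cdot)\|_{\rmL^2(\calV)}^2$ are measurable and integrable on $\calU$.

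First, I would invoke the standard Poincaré inequality on $\rmH^1_0(\calV)$: there exists a constant $C_P = C_P(\calV)$, depending only on (for example) the diameter of $\calV$, such that
\begin{equation*}
    \int_{\calV} |u(x,v)|^2 \,\rmd v \,\leq\, C_P^2 \int_{\calV} |\nabla_v u(x,v)|^2 \,\rmd v \qquad \text{for a.e.~} x \in \calU.
\end{equation*}
Next, I would integrate this pointwise (in $x$) inequality over $\calU$ and apply Fubini's theorem to both sides, obtaining
\begin{equation*}
    \iint_{\calD} |u|^2 \,\rmd x \rmd v \,\leq\, C_P^2 \iint_{\calD} |\nabla_v u|^2 \,\rmd x \rmd v,
\end{equation*}
which is exactly \cref{ineq:poincare} after taking square roots. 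Note that the resulting constant depends only on $\calV$ (in fact only on its diameter), not on $\calU$.

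There is no real obstacle here: the inequality is a routine slicing argument since vanishing on $\partial^v \calD$ is encoded fiberwise in the definition of $\rmL^2(\calU; \rmH^1_0(\calV))$, and the regularity of $\partial \calU$ plays no role. The only point requiring a brief comment is the measurability of the integrands in $x$, which is a standard consequence of the Bochner-measurability of $u$ as a function from $\calU$ into $\rmH^1_0(\calV)$.
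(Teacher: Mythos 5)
Your argument is correct and is exactly the paper's approach: the paper's proof simply states that the inequality follows from Fubini's theorem and the standard Poincaré inequality, which is precisely the fiberwise slicing argument you spell out. Your additional remarks on measurability and the constant depending only on $\calV$ are fine and consistent with the intended proof.
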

\begin{proof}
    This follows easily from Fubini's theorem and the standard Poincaré inequality.

\end{proof}

Let $\rho_k(x)$ be a standard mollifier, defined as
\begin{equation*}
    \rho(x) = k^n \rho(kx)\geq 0, \textrm{ where } \rho(x)\in \rmC_c^\infty(\R^n),\, \mathrm{supp} (\rho)\subset B_1,\, \int_{\R^n} \rho(x)\,\mathrm{d}x = 1.
\end{equation*}
We introduce the \emph{convolution-translation} mollification for the domain $\calU$ and any function $u(x)\in \rmL^1({\calU})$:
\begin{equation*}
    u_{\star k}(x)=(u\star\rho_k)(x):= \int_\calU u(y)\rho_k(x - 2\mathbf{n}(x)/k -y)\,\mathrm{d}y,
\end{equation*}
where $\mathbf{n}(x)$ is defined as
\begin{equation*}
    \mathbf{n}(x): = - \frac{\nabla_x d(x,\partial\calU)}{\abs{\nabla_x d(x,\partial\calU)}} \varrho(x),
\end{equation*}
$d(x,\partial\calU)$ is the distance function, and $\varrho(x)$ is a smooth cutoff function such that for some fixed $r>0$, $\chi_{B_{r}(\partial\calU)}\leq \varrho \leq \chi_{B_{2r}(\partial\calU)}$ in $\calU$, where $B_{r}(\partial\calU):=\{x\in{\calU} \textrm{ s.t. } d(x,\partial\calU)<r\}$.
Since $\partial \calU$ is $\rmC^{1,1}$, for small enough $r$ (smaller than the radius of the interior ball condition), $\mathbf{n}(x)$ is $\rmC^{0,1}(\overline{\calU})$, and $\mathbf{n}(x) = \mathbf{n}_x$ for $x \in \partial \calU$.

\begin{lemma}[Density of smooth functions]\label{thm:contran:converge}
    Let $\calD$ satisfy \cref{assump:D0}, where $\partial\calU$ is $\rmC^{1,1}$.
    For any $u \in \rmH^1_{\mathrm{hyp}}(\calD)$, let $u_{\star k}$ be the convolution-translation of $u$ in $\calD$, then we have the following convergences
    \begin{itemize}
        \item $u_{\star k} \to u$ strongly in $\rmL^2(\calU;\rmH^1(\calV))$,
        \item $v\cdot\nabla_x u_{\star k} \to v\cdot\nabla_x u$ strongly in $\rmL^2(\calU;\rmH^{-1}(\calV))$,
    \end{itemize}
    Consequently, for each $u\in\rmH^1_{\mathrm{hyp}}(\calD)$ there exists a sequence of functions $u_\varepsilon \in \rmC^1(\calD)\cap \rmC(\overline{\calD})$ such that $u_\varepsilon \to u$ in $\rmH^1_{\mathrm{hyp}}(\calD)$ as $\varepsilon\to 0$.
\end{lemma}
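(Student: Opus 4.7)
The plan is to establish well-posedness and smoothness of $u_{\star k}$, then prove the two strong convergences separately; the first is the standard Bochner-valued mollifier theorem, while the second requires a commutator analysis and is the main obstacle. The final $\rmC^1$ density claim then follows by an additional mollification in $v$.

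Since $\partial\calU\in\rmC^{1,1}$ and $\mathbf{n}(x)$ agrees with the inward unit normal on $\partial\calU$ (with smooth cutoff $\varrho$), for $k$ large the translated point $x-2\mathbf{n}(x)/k$ lies at distance of order $1/k$ inside $\calU$ near the boundary, so the integrand in $u_{\star k}(x,v)$ is supported in $\calU$. Differentiating under the integral, using $\mathbf{n}\in\rmC^{0,1}(\overline{\calU})$, shows $u_{\star k}\in\rmC^1(\overline{\calU};\rmH^1(\calV))$. Viewing $u\in\rmL^2(\calU;X)$ with $X=\rmH^1(\calV)$ (extended by zero) and writing $u_{\star k}(x)=(u*\rho_k)(x-2\mathbf{n}(x)/k)$, the first convergence follows by combining the classical Bochner mollifier theorem with $\rmL^2(\calU;X)$-continuity of $X$-valued translation by the vanishing amount $2\mathbf{n}(x)/k$.

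For the second convergence, I introduce the Bochner mollification
\begin{equation*}
    (v\cdot\nabla_x u)_{\star k}(x):=\int_\calU (v\cdot\nabla_x u)(y)\,\rho_k(x-2\mathbf{n}(x)/k-y)\,\rmd y
\end{equation*}
valued in $\rmH^{-1}(\calV)$; the same Bochner mollifier theorem yields $(v\cdot\nabla_x u)_{\star k}\to v\cdot\nabla_x u$ in $\rmL^2(\calU;\rmH^{-1}(\calV))$. Thus it suffices to show that the commutator $C_k:=v\cdot\nabla_x u_{\star k}-(v\cdot\nabla_x u)_{\star k}$ vanishes in that space. Testing against $\eta\in\rmC^1_c(\calD)$ with $\eta(x,\cdot)\in\rmH^1_0(\calV)$, integrating by parts in $x$ (justified since $u_{\star k}$ is smooth in $x$ and $\eta$ compactly supported), swapping the order of integration, and using
\begin{equation*}
    \nabla_x\bigl[\rho_k(x-2\mathbf{n}(x)/k-y)\bigr]=\bigl(I-\tfrac{2}{k}D\mathbf{n}(x)\bigr)^T(\nabla\rho_k)(x-2\mathbf{n}(x)/k-y),
\end{equation*}
the identity part of $(I-\tfrac{2}{k}D\mathbf{n})^T$ is rewritten via $\nabla_x=-\nabla_y$ acting on $\rho_k(\cdots)$ and, after applying the weak definition of $v\cdot\nabla_x u$ on the adjoint convolution-translation $S_k\eta(y,v):=\int\rho_k(x-2\mathbf{n}(x)/k-y)\eta(x,v)\,\rmd x$, reproduces $\langle (v\cdot\nabla_x u)_{\star k},\eta\rangle$. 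The remaining $D\mathbf{n}$-correction yields the explicit formula
\begin{equation*}
    C_k(x,v)=\frac{2}{k}\,v\cdot D\mathbf{n}(x)^T\int_\calU u(y,v)\,(\nabla\rho_k)(x-2\mathbf{n}(x)/k-y)\,\rmd y.
\end{equation*}

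The decisive observation is that $\tfrac{1}{k}|\nabla\rho_k|$ has $\rmL^1(\R^n)$-norm equal to $\|\nabla\rho\|_{\rmL^1}$, independent of $k$. Together with boundedness of $|v|$ on $\calV$ and $|D\mathbf{n}|$ on $\calU$, Young's inequality (with a Jacobian change of variables to handle the $x$-dependent translation) shows that $u\mapsto C_k(u)$ is \emph{uniformly} bounded as an operator on $\rmL^2(\calD)$. For smooth $u$, one integration by parts in $y$ inside $C_k$ transfers the derivative from $\rho_k$ to $u$, giving $\|C_k(u)\|_{\rmL^2(\calD)}=O(1/k)$. The general case follows by the standard density argument: given $\delta>0$, first pick $u^\varepsilon\in\rmC^\infty_c(\calD)$ so close to $u$ that the uniform bound forces $\|C_k(u-u^\varepsilon)\|_{\rmL^2}<\delta/2$ for all $k$, then take $k$ large so that $\|C_k(u^\varepsilon)\|_{\rmL^2}<\delta/2$. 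Since $\rmL^2(\calD)\hookrightarrow\rmL^2(\calU;\rmH^{-1}(\calV))$, this establishes the second convergence. Finally, $\rmC^1(\calD)\cap\rmC(\overline{\calD})$ approximants are obtained by additionally mollifying $u_{\star k}$ in $v$; the commutator of a standard $v$-mollifier with the transport $v\cdot\nabla_x$ is of order $\varepsilon$ in the required norm, so both convergences are preserved. The main obstacle is the commutator analysis above, whose key point is that the $1/k$ from $D\mathbf{n}/k$ exactly absorbs the extra $k$ from differentiating $\rho_k$, keeping the commutator uniformly bounded and thus enabling the density argument.
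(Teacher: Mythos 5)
Your proposal is correct and follows essentially the same route as the paper: the commutator $C_k$ you isolate is exactly the paper's error term $R_k$, the key point in both is that the $1/k$ from $D\mathbf{n}/k$ absorbs the extra factor of $k$ in $\nabla\rho_k$ so that Young's inequality gives a bound uniform in $k$, the decay for regular approximants comes from the same integration by parts in $y$, and the general case follows by the same density argument (the paper mollifies the representation $F^0+\sum_i\partial_{v_i}F^i$ of $v\cdot\nabla_x u$ rather than invoking a Bochner-valued mollifier theorem, but these are the same computation). The only cosmetic difference is your final $\rmC^1(\calD)\cap\rmC(\overline{\calD})$ step via an extra $v$-mollification versus the paper's appeal to standard $\rmH^1(\calD)$ density plus diagonalization, which are at the same level of detail.
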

\begin{proof}
    The following properties are easy to verify: if $u(x,v)\in \rmL^2(\calU;\rmH^1(\calV))$, then for fixed $v\in\calV$ and sufficiently large $k$, $u_{\star k} (\cdot, v)\in \rmC^1(\calU)\cap \rmC(\overline{\calU})$,
    $u_{\star k} \to u \in \rmL^2(\calU; \rmH^1(\calV))$ as $k\to\infty$ and if $h \in \rmL^2(\calD)$ then $h_{\star k} \to h$ in $\rmL^2(\calD)$ as $k\to\infty$.

    Next, we prove $v\cdot\nabla_x u_{\star k} \to v\cdot\nabla_x u$ strongly in $\rmL^2(\calU;\rmH^{-1}(\calV))$.
    To do this, observe
    that since $v\cdot\nabla_x u \in \rmL^2(\calU;\rmH^{-1}(\calV))$, there exists a vector with components $F^i\in\rmL^2(\calD)$, $i=0,1,\dots,n$ such that for all $\varphi\in \rmL^2(\calU;\rmH^{1}_0(\calV))$ we have
    \begin{equation}\label{eq:vnablax:repr}
        \pair{v\cdot\nabla_x u, \varphi}_{\rmL^2(\calU;\rmH^{-1}(\calV)), \rmL^2(\calU;\rmH^1_0(\calV))} = \iint_{\calD} F^0 \varphi + \sum_{i=1}^n F^i \partial_{v_i} \varphi \,\rmd x\rmd v.
    \end{equation}
    Then we compute a representation for $v\cdot\nabla_x u_{\star k}\in\rmL^2(\calU;\rmH^{-1}(\calV))$, by density of smooth functions in $\rmL^2(\calU;\rmH_0^1(\calV))$, we take $\varphi\in \rmC^\infty(\calD)\cap \rmC(\overline{\calD})$ such that $\varphi|_{\partial^v \calD} = 0$,
    \begin{align*}
        \iint_{\calD} & (v\cdot\nabla_x u_{\star k}(x,v)) \varphi(x,v) \,\rmd x\rmd v
        \\
        =             & \iint_{\calD} v\cdot\nabla_x \left(\int_{\calU} u(y,v)\rho_k(x-2\mathbf{n}(x)/k -y)\,\rmd y \right) \varphi(x,v) \,\rmd x\rmd v
        \\
        =             & \int_{\calD} \int_{\calU} u(y,v) (\mathbf{I}-\frac{2}{k}\nabla_x \mathbf{n}(x))^{\mathrm{T}} v \cdot\nabla \rho_k(x-2\mathbf{n}(x)/k -y) \varphi(x,v) \,\rmd x \rmd y\rmd  v
        \\
        =             & -\int_{\calD} \int_{\calU} u(y,v) v \cdot\nabla_y (\rho_k(x-2\mathbf{n}(x)/k -y) \varphi(x,v)) \,\rmd x \rmd y\rmd  v                                                        \\
                      & -  \frac{2}{k}\int_{\calD} \int_{\calU} u(y,v)  (\nabla_x \mathbf{n}(x))^{\mathrm{T}} v \cdot\nabla \rho_k(x-2\mathbf{n}(x)/k -y) \varphi(x,v) \,\rmd x \rmd y\rmd  v
        \\
        =             & \pair*{ v\cdot\nabla_y u(y,v),\,  \int_{\calU} \rho_k(x-2\mathbf{n}(x)/k -y)\varphi(x,v)\,\rmd x}_{\rmL^2(\calU;\rmH^{-1}(\calV)), \rmL^2(\calU;\rmH^1_0(\calV))}            \\
                      & - \iint_{\calD} R_k (x,v)\varphi(x,v) \,\rmd x\rmd v,
    \end{align*}
    where $\nabla_x \mathbf{n}(x)$ is the Jacobian of $\mathbf{n}(x)$, and
    \begin{equation*}
        R_k (x,v)= \frac{2}{k} \int_{\calU} u(y,v) (\nabla_x\mathbf{n}(x))^\mathrm{T} v\cdot\nabla \rho_k(x-2\mathbf{n}(x)/k-y)\,\rmd y.
    \end{equation*}
    Also note in the last equality we use the definition of $v\cdot\nabla_x u$ since $\int_{\calU}\rho_k(x-2\mathbf{n}(x)/k-y)\varphi(x,v)\,\rmd x \in \rmC_c^\infty(\calD)$ for $k$ sufficiently large and is a valid test function for distributions.
    It follows from \cref{eq:vnablax:repr} and the above, that
    \begin{multline*}
        \iint_{\calD} (v\cdot\nabla_x u_{\star k}(x,v)) \varphi(x,v) \,\rmd x\rmd v \\
        = \iint_{\calD} \left(F^0_{\star k}(x,v) - R_k(x,v)\right) \varphi(x,v) + \sum_{i=1}^n F^i_{\star k} \partial_{v_i} \varphi(x,v) \,\rmd x\rmd v .
    \end{multline*}
    Therefore, by linearity
    \begin{multline*}
        \pair{v\cdot\nabla_x (u_{\star k}-u),\varphi}_{\rmL^2(\calU;\rmH^{-1}(\calV)), \rmL^2(\calU;\rmH^1_0(\calV))} \\
        = \iint_{\calD}(F^0_{\star k} - F^0 - R_k) \varphi(x,v) + (F^i_{\star k}-F^i) \partial_{v_i} \varphi(x,v) \,\rmd x\rmd v
    \end{multline*}
    and from the identification of the space $\rmL^2(\calU;\rmH^{-1}(\calV))$ we have
    \begin{align*}
        \norm{v\cdot\nabla_x (u_{\star k}-u)}_{\rmL^2(\calU;\rmH^{-1}(\calV))} & \leq \norm{F^0_{\star k} - F^0 - R_k}_{\rmL^2(\calD)} + \norm{F^i_{\star k}-F^i}_{\rmL^2(\calD)}
        \\
                                                                               & \leq \norm{F^0_{\star k} - F^0 }_{\rmL^2(\calD)} + \norm{R_k}_{\rmL^2(\calD)} + \norm{F^i_{\star k}-F^i}_{\rmL^2(\calD)}.
    \end{align*}
    It suffices to show that $\norm{R_k}_{\rmL^2(\calD)}\to 0$ as $k\to \infty$.

    Indeed, if $u\in \rmH^1(\calD)$, then
    \begin{align*}
        R_k(x,v) = & \frac{2}{k}\int_{\calU} -u(y,v)  v\cdot(\nabla_x \mathbf{n}(x))^{\mathrm{T}}\nabla_y \left(\rho_k (x-2\mathbf{n}(x)/k -y)\right)\,\mathrm{d}y \\
        =          & \frac{2}{k}\int_{\calU} \left(\nabla_y u(y,v)\cdot \nabla_x \mathbf{n}(x) v\right)  \rho_k (x-2\mathbf{n}(x)/k -y)\,\mathrm{d}y               \\
        =          & \frac{2}{k} \left(\nabla_y u\right)_{\star k}(x,y) \cdot \nabla_x \mathbf{n}(x) v.
    \end{align*}
    As such $R_k \to 0$ as $k\to\infty$ in $\rmL^2({\calD})$.
    Now for any $u\in \rmL^2(\calU;\rmH^1(\calV))$, we can take a $\rmH^1(\calD)$ sequence $u_\varepsilon\to u$ as $\varepsilon\to 0$ in $\rmL^2({D})$, and find that
    \begin{multline*}
        R_k(x,v) = \underbrace{\frac{2}{k} \left(\nabla_y u_\varepsilon\right)_{\star k}(x,v)\cdot \nabla_x \mathbf{n}(x)v}_{\mathrm{I}_{k,\varepsilon}:=} \\
        + \underbrace{\frac{2}{k} \int_{\calU} (u-u_\varepsilon)(y,v) v\cdot (\nabla_x\mathbf{n}(x))^{\mathrm{T}}\nabla\rho_k (x-2\mathbf{n}(x)/k -y)\,\mathrm{d}y}_{\mathrm{II}_{k,\varepsilon}:=} .
    \end{multline*}
    It is clear that for fixed $\varepsilon$, the first term $\mathrm{I}_{k,\varepsilon}\to 0$ in $\rmL^2({\calD})$ as $k\to 0$.
    For the second term, we compute
    \begin{align*}
         & \norm{\mathrm{II}_{k,\varepsilon}}^2_{\rmL^2(\calD)}                                                                                                                                                             \\
         & \leq 4 \norm{v\nabla_x\mathbf{n}(x)}^2_{\rmL^\infty(\calD)} \iint_\calD \left(\int_\calU \abs{(u-u_\varepsilon)(y,v)}\abs{(k^n \nabla \rho)(kx-2\mathbf{n}(x)-ky)}\,\mathrm{d}y\right)^2\,\mathrm{d}x\mathrm{d}v \\
         & \leq 4 \norm{v\nabla_x\mathbf{n}(x)}^2_{\rmL^\infty(\calD)}
        \norm{u-u_\varepsilon}_{\rmL^2(\calD)}^2 \norm{k^n \nabla\rho(ky)}^2_{\rmL^1(\calD)}
    \end{align*}
    where in the second inequality we use Young's convolution inequality.
    Note that for all $k\geq 1$,
    \begin{equation*}
        \int_{\R^n} \abs{k^n \nabla \rho(ky)}\,\mathrm{d}y = C
    \end{equation*}
    for some constant $C$ independent of $k$.
    Hence, by we have $\lim_{k\to\infty} \norm{\mathrm{II}_{k,\varepsilon}}_{\rmL^2(\calD)}^2 \leq C' \norm{u-u_\varepsilon}_{\rmL^2(\calD)}^2$ in $\rmL^2(\calD)$ for each fixed $\varepsilon>0$ and some other constant $C'$ independent of $\varepsilon$.
    By combining the convergence of $\mathrm{I}_{k,\varepsilon}$ and $\mathrm{II}_{k,\varepsilon}$
    we prove that $\norm{R_k}_{\rmL^2(\calD)}\to 0$ as $k\to \infty$.

    For the density of smooth function, we use the fact that each $u_{\star k}$ can be approximated by $\rmC^1(\calD)\cap \rmC(\overline{D})$ functions in $\rmH^1(\calD)$ norm.
    By combining both approximations using a diagonalization argument, the lemma is proved.

\end{proof}

\subsection{Renormalized Green's formula}

With the density of smooth functions in $\rmH^1_{\mathrm{hyp}}(\calD)$ at hand we can now establish the renormalized Green's formula.
We note that similar results and techniques can be found in the literature, see for instance~\cite{Mis00,Zhu22}. We would however like to point out that our definition of weak solution (\cref{def:weaksol}) is weaker than the one in~\cite{Zhu22}, in that we can use the weak trace and the density of smooth functions to establish the renormalized Green's formula.
The proof can be found in \cref{sec:appendix}.
\begin{lemma}\label{thm:renormal}
    Let $\calD \subset \R^{2n}$, and let \cref{assump:1,assump:D0} hold, where $\partial \calU \in \rmC^{1,1}$. Let $f \in \rmL^2(\calU;\rmH^{-1}(\calV))$, if $u \in \rmH^1_{\mathrm{hyp}}(\calD)$ is a weak solution in the sense of \cref{def:weaksol},
    then for any $\Phi:\R\to\R$ such that $\Phi'\in \rmW^{1,\infty}(\R)$ and either
    \begin{enumerate}
        \item \label{item:renormal1} $\Phi'(\Tr_v(u)) = 0$ a.e.~on $\partial^v \calD$ and $\varphi\in \rmC^1 (\overline{\calD})$, or
        \item \label{item:renormal2} $\varphi\in \rmC^1(\overline{\calD})$ with $\varphi|_{\partial^v \calD} = 0$,
    \end{enumerate}
    we have
    \begin{multline}\label{eq:renormal}
        \iint_{\calD} \left(\bfA\nabla_v \Phi(u)\right)\cdot \nabla_v \varphi + \varphi \Phi''(u)(\bfA\nabla_v u)\cdot\nabla_v u  - \left(\mathbf{b}\cdot\nabla_v \Phi(u)\right) \varphi
        + \Phi(u) v\cdot\nabla_x \varphi\,\mathrm{d}\xi \\
        = \iint_{\calD} f\Phi'(u) \varphi \,\mathrm{d}\xi + \int_{\partial^x \calD} v\cdot\mathbf{n}_x \Phi(\tr_x(u))\varphi\,\mathrm{d}S.
    \end{multline}
    In the special case that $\Phi(u) = u$ we get that $(u,\tr_x(u))$ is a weak solution pair in the sense of \cref{def:weaksol:c01}.
\end{lemma}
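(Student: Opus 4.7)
The plan is to test the weak equation of \cref{def:weaksol} with the non-smooth function $\psi = \Phi'(u)\varphi$, and then unfold the resulting transport term via a distributional chain rule combined with the weak-trace integration-by-parts formula \cref{eq:intbypart}. The main tool is the density of smooth functions in $\rmH^1_{\mathrm{hyp}}(\calD)$ provided by the convolution-translation in \cref{thm:contran:converge}.

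First, I would verify that $\psi$ is a legitimate test function. Since $\Phi' \in \rmW^{1,\infty}$ is Lipschitz, $\Phi'(u) \in \rmL^2(\calU;\rmH^1(\calV))$ with $\nabla_v \Phi'(u) = \Phi''(u)\nabla_v u$, so that $\psi \in \rmL^2(\calU;\rmH^1(\calV))$. Moreover, in both cases (\ref{item:renormal1}) and (\ref{item:renormal2}) we have $\Tr_v \psi = \Phi'(\Tr_v u)\Tr_v \varphi = 0$, hence $\psi \in \rmL^2(\calU;\rmH^1_0(\calV))$. Next, I would extend the weak formulation from $\varphi \in \rmC^1_c(\calD)$ to such $\psi$ by rewriting $\iint u\,(v\cdot\nabla_x \varphi)\,\mathrm{d}\xi$ as $-\pair{v\cdot\nabla_x u,\varphi}_{\rmL^2(\calU;\rmH^{-1}(\calV)),\rmL^2(\calU;\rmH^1_0(\calV))}$ (available since $u \in \rmH^1_{\mathrm{hyp}}$) and using density of $\rmC^1_c$ in $\rmL^2(\calU;\rmH^1_0(\calV))$. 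Plugging $\psi$ into this extended formulation and using the Leibniz rule $\nabla_v\psi = \Phi''(u)\varphi\nabla_v u + \Phi'(u)\nabla_v \varphi$ together with $\Phi'(u)\nabla_v u = \nabla_v \Phi(u)$ (the Sobolev chain rule in $v$) already produces the second-order and drift terms of \cref{eq:renormal} along with the source $\iint f\Phi'(u)\varphi\,\mathrm{d}\xi$ on the right-hand side.

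The remaining and most delicate step is to transform $\pair{v\cdot\nabla_x u,\Phi'(u)\varphi}$ into $\pair{v\cdot\nabla_x \Phi(u),\varphi}$ and then invoke \cref{eq:intbypart}. For this I would first establish $\Phi(u)\in\rmH^1_{\mathrm{hyp}}(\calD)$ together with the distributional chain rule $v\cdot\nabla_x \Phi(u) = \Phi'(u)(v\cdot\nabla_x u)$ as elements of $\rmL^2(\calU;\rmH^{-1}(\calV))$, and the trace identification $\tr_x \Phi(u) = \Phi(\tr_x u)$. For the smoothed approximations $u_k = u_{\star k}$ from \cref{thm:contran:converge} both identities hold classically, and one passes to the limit using the strong convergences $u_k \to u$ in $\rmL^2(\calU;\rmH^1(\calV))$ and $v\cdot\nabla_x u_k \to v\cdot\nabla_x u$ in $\rmL^2(\calU;\rmH^{-1}(\calV))$, the bound $\Phi''\in\rmL^\infty$, and the continuity of the weak-trace operator $\tr_x$ from \cref{thm:weaktrace}. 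Then \cref{eq:intbypart} applied to $\Phi(u)$ against $\varphi$ gives
\begin{equation*}
\pair{v\cdot\nabla_x \Phi(u),\varphi} = \int_{\partial^x\calD} v\cdot\mathbf{n}_x\,\Phi(\tr_x u)\,\varphi\,\mathrm{d}S - \iint_\calD \Phi(u)\,v\cdot\nabla_x \varphi\,\mathrm{d}\xi
\end{equation*}
in case (\ref{item:renormal2}); in case (\ref{item:renormal1}), where $\varphi$ need not vanish on $\partial^v\calD$, the hypothesis $\Phi'(\Tr_v u)=0$ combined with approximation of $\varphi$ by cutoffs against $\partial^v\calD$ yields the same identity, the extra $\partial^v$ contributions vanishing in the limit because they are weighted by $\Phi'(\Tr_v u)$. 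Substituting into the identity obtained in the previous paragraph produces exactly \cref{eq:renormal}. The last sentence of the lemma (that $(u,\tr_x u)$ is a weak solution pair in the sense of \cref{def:weaksol:c01}) is then the special case $\Phi(u)=u$.

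The main obstacle is the DiPerna--Lions type renormalization in the preceding step: verifying that the commutator between $v\cdot\nabla_x$ and the smoothing $u\mapsto u_{\star k}$ vanishes strongly in $\rmL^2(\calU;\rmH^{-1}(\calV))$, which is precisely what \cref{thm:contran:converge} delivers. The $\rmC^{1,1}$ regularity of $\partial\calU$ enters critically here through the Lipschitz extension $\mathbf{n}(x)$ of the outer normal in the convolution-translation, whose Jacobian bound controls the remainder term $R_k$ in the proof of \cref{thm:contran:converge}; without it, the chain rule and the continuity of $\tr_x$ along smoothings would both be out of reach.
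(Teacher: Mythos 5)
Your overall architecture (test the extended weak formulation with $\psi=\Phi'(u)\varphi$, then convert the transport pairing into a boundary term via the convolution-translation) is close in spirit to the paper, but the pivotal intermediate claim is a genuine gap: you assert that $\Phi(u)\in\rmH^1_{\mathrm{hyp}}(\calD)$ with the chain rule $v\cdot\nabla_x\Phi(u)=\Phi'(u)(v\cdot\nabla_x u)$ \emph{as elements of} $\rmL^2(\calU;\rmH^{-1}(\calV))$, and then apply \cref{thm:weaktrace} and \cref{eq:intbypart} to $\Phi(u)$. This is in general false (for $n\geq 2$), and it cannot be obtained by passing to the limit in $v\cdot\nabla_x\Phi(u_{\star k})=\Phi'(u_{\star k})\,v\cdot\nabla_x u_{\star k}$: writing $v\cdot\nabla_x u=F^0+\sum_i\partial_{v_i}F^i$ with $F^i\in\rmL^2(\calD)$, one has $\Phi'(u)\partial_{v_i}F^i=\partial_{v_i}(\Phi'(u)F^i)-\Phi''(u)\partial_{v_i}u\,F^i$, and the commutator $\Phi''(u)\nabla_v u\cdot F$ (equivalently, the term $\varphi\,\Phi''(u)(\bfA\nabla_v u)\cdot\nabla_v u$ that survives in \cref{eq:renormal}) is only in $\rmL^1(\calD)$, which does not embed into $\rmL^2(\calU;\rmH^{-1}(\calV))$. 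Concretely, the map $\varphi\mapsto\pair{v\cdot\nabla_x u,\Phi'(u)\varphi}$ is only bounded for \emph{bounded} $\varphi$, since $\nabla_v(\Phi'(u)\varphi)$ contains $\Phi''(u)\varphi\nabla_v u$, which is not controlled by $\norm{\varphi}_{\rmL^2(\calU;\rmH^1(\calV))}$; hence no uniform $\rmL^2(\calU;\rmH^{-1}(\calV))$ bound on $v\cdot\nabla_x\Phi(u_{\star k})$ is available, $\Phi(u)$ need not lie in $\rmH^1_{\mathrm{hyp}}(\calD)$, and $\tr_x(\Phi(u))$ and \cref{eq:intbypart} for $\Phi(u)$ are simply not defined/applicable. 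This is exactly why compositions of $\rmH^1_{\mathrm{hyp}}$ functions are treated as \emph{renormalized} (not weak) solutions in this theory, cf.\ \cref{def:rensub}.

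The repair is to renormalize at the mollified level rather than at the limit level, which is what the paper does: insert $\psi=\Phi'(u_{\star k})\phi$ into the equation satisfied by $u_{\star k}$ (with the commutator errors $R_k,E_k$ from \cref{thm:contran:converge}); since $u_{\star k}$ is $\rmC^1$ in $x$, the chain rule and the integration by parts in $x$ are classical there, producing the boundary term $\int_{\partial^x\calD}v\cdot\mathbf{n}_x\,\Phi(u_{\star k})\phi\,\rmd S$ and keeping the $\rmL^1$ term $\phi\,\Phi''(u_{\star k})(\bfA\nabla_v u_{\star k})\cdot\nabla_v u_{\star k}$ explicitly; one then passes $k\to\infty$ using $R_k,E_k\to0$, the Lipschitz bound on $\Phi$, and the specific convergence $u_{\star k}|_{\partial^x\calD}\to\tr_x(u)$ in $\rmL^2_{\mathrm{loc}}(\partial^x\calD,\abs{v\cdot\mathbf{n}_x}^2)$ (the Cauchy--Schwarz step \cref{eq:contran:4}) — not any ``continuity of $\tr_x$'' applied to $\Phi(u)$. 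Your other ingredients (extending the weak formulation to $\psi\in\rmL^2(\calU;\rmH^1_0(\calV))$, checking $\Tr_v\psi=0$ in both cases, the Leibniz computation of the $\bfA$- and $\mathbf{b}$-terms, the cutoff argument near $\partial^v\calD$ in case~\eqref{item:renormal1}, and the observation that $\Phi(u)=u$ gives the weak solution pair) are sound and consistent with the paper, but as written the proof does not go through without replacing the ``$\Phi(u)\in\rmH^1_{\mathrm{hyp}}$ plus \cref{eq:intbypart}'' step by the mollified-level argument above.
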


\begin{remark}
    When $\partial \calU$ is only $\rmC^{0,1}$, we can still establish the renormalized Green's formula when we have a priori that $u \in \rmL^\infty(\calD)$.
    Specifically, the proof of \cref{thm:renormal} gives the existence of $u_{\Gamma} \in \rmL^\infty(\partial^x \calD)$ such that $(u,u_{\Gamma})$ is a weak solution pair in the sense of \cref{def:weaksol:c01}, satisfying
    \begin{equation*}
        \norm{u_{\Gamma}}_{\rmL^\infty(\partial^x \calD )} \leq \norm{u}_{\rmL^\infty(\calD)}.
    \end{equation*}
    Furthermore, if in addition $\Phi \in \rmW^{2,\infty}_{\mathrm{loc}}$ and either of \cref{item:renormal1,item:renormal2} holds, then \cref{eq:renormal} holds with $\tr_x(u)$ replaced by $u_{\Gamma}$.
\end{remark}

\begin{remark}
    If the source term $f\in\rmL^2(\calD)$ then $\Phi$ can be chosen to be convex and such that $\Phi'\in \rmL^\infty(\R)$. We then obtain that $\Phi(u)$ is a renormalized subsolution in the sense of satisfying the following inequality with $\varphi \geq 0$:
    \begin{multline*}
        \int_{\calD} \left(\bfA\nabla_v \Phi(u)\right)\cdot \nabla_v \varphi - \left(\mathbf{b}\cdot\nabla_v \Phi(u)\right) \varphi
        + \Phi(u) v\cdot\nabla_x \varphi\,\mathrm{d}\xi
        \\
        \leq \int_{\calD} f\Phi'(u) \varphi \,\mathrm{d}\xi + \int_{\partial^x \calD} v\cdot\mathbf{n}_x \Phi(\tr_x(u))\varphi\,\mathrm{d}S.
    \end{multline*}
    That is, we can choose $\Phi(u) = u_+$.
\end{remark}

A consequence of the renormalized Green's formula \cref{eq:renormal} is the weak maximum principle for a weak solution.
Recall that we are under \cref{assump:1,assump:D0,assump:posdiv}, which is sufficient for the weak maximum principle to hold.

\subsection{Proof of \cref{thm:comparison}}

Let $h = u-w$, then $-\L h = f \leq 0$, $\tr_x(h) = \tr_x(u)-\tr_x(w) \leq 0$ on $\partial^x_+ \calD$, and $\Tr_v h \leq 0$ on $\partial^v \calD$.
Consider the following function $\Psi_k$ for $k>0$:
\begin{equation*}
    \Psi_k(h) =
    \begin{cases}
        0 & h \leq 0  \\
        h & 0 < h < k \\
        k & h \geq k
    \end{cases}
\end{equation*}
We define $\Phi_k(h) = \int_{-\infty}^{h} \Psi_k(s)\,\mathrm{d}s$.

It is clear by definition that $\Phi_k' = \Psi_k \in \rmW^{1,\infty}(\R)$ and $\Phi_k(r) = O(r)$ as $r \to \infty$.
Since $\Phi_k(h) = \Phi_k'(0)=0$ when $h \leq 0$, we have that $\Phi_k'(\Tr_v h) = 0$ on $\partial^v \calD$.
Now applying \cref{thm:renormal} with $\Phi_k$ and $\varphi=1$, we get, since $f \leq 0$,
\begin{equation*}
    \int_{\calD} \Phi_k''(h)(\bfA \nabla_v h)\cdot\nabla_v h  - \left(\mathbf{b}\cdot\nabla_v \Phi_k(h)\right)\,\mathrm{d}\xi
    \leq \int_{\partial^x \calD} v\cdot\mathbf{n}_x \Phi_k(\tr_x(h))\,\mathrm{d}S.
\end{equation*}
By \cref{assump:posdiv}, we can integrate by parts to get
\begin{equation*}
    -\int_\calD \mathbf{b} \cdot \nabla_v \Phi_k(h) \,\mathrm{d}\xi
    =
    \underset{=0}{\underbrace{-\int_{\calD} \nabla_v \cdot (\mathbf{b} \Phi_k(h)) \,\mathrm{d}\xi}} + \int_{\calD} (\nabla_v \cdot \mathbf{b}) \Phi_k(h) \,\mathrm{d}\xi
    \geq 0,
\end{equation*}
where the first term on the right-hand side is zero since $\Phi_k(\Tr_v h) =0$ on $\partial^v \calD$.

By our assumption, we have that $\Phi_k(\tr_x(h)) = 0$ on $\partial^x_+ \calD$.
Hence,
\begin{align*}
    \int_{\calD} \Phi_k''(h)|\nabla_v h|^2 \,\mathrm{d}\xi + \int_{\partial^x_- \calD} |v\cdot\mathbf{n}_x| \Phi_k(\tr_x(h))\,\mathrm{d}S \leq 0.
\end{align*}
Since this holds for all $k$, we have that $h \leq 0$ a.e.~in $\calD$ and $\tr_x(h) \leq 0$ a.e.~on $\partial^x_- \calD$.
This concludes the proof.
\begin{flushright}
    \qed
\end{flushright}

Now the weak maximum principle follows as a corollary as constants solve $-\L C = 0$ in the sense of \cref{def:weaksol}.

\begin{corollary}\label{thm:weakunique}
    Let $\calD \subset \R^{2n}$, and let \cref{assump:1,assump:D0,assump:posdiv} hold, where $\partial \calU \in \rmC^{1,1}$.
    Let $f = 0$‚ $g_1\in \rmL^\infty(\partial^v \calD)$ and $g_2\in \rmL^\infty(\partial^x \calD)$.
    If $u$ is a weak solution to \cref{eq:bvp2} in the sense of \cref{def:dirichletweaksol}, then it holds that
    \begin{equation*}
        \sup_{\partial^x_- \calD} \tr_x(u) \leq \sup_{\partial_{\mathrm{hyp}}\calD} g_+ \textrm{ and } \sup_{\calD} u \leq \sup_{\partial_{\mathrm{hyp}}\calD} g_+.
    \end{equation*}
    In particular,
    \begin{equation*}
        \sup_{\partial^x_- \calD} \abs{\tr_x(u)} \leq \sup_{\partial_{\mathrm{hyp}}\calD} \abs{g} \textrm{ and } \sup_{\calD} \abs{u}\leq \sup_{\partial_{\mathrm{hyp}}\calD} \abs{g}.
    \end{equation*}

    If $\partial \calU$ is only $\rmC^{0,1}$, then the same result holds provided the weak solution $u$ is bounded.
\end{corollary}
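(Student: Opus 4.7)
The plan is to deduce this corollary from the comparison principle (Theorem \ref{thm:comparison}) by choosing $w$ to be a suitable constant. Set $M := \sup_{\partial_{\mathrm{hyp}} \calD} g_+$, which is finite by hypothesis. The constant function $w \equiv M$ is trivially a weak solution to $-\L w = 0$ in the sense of Definition \ref{def:weaksol}: both $\nabla_v w$ and $v\cdot\nabla_x w$ vanish identically, so the defining integral identity reduces to $0 = 0$ for every test function. It is also immediate that $w \in \rmH^1_{\mathrm{hyp}}(\calD)$.

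Next, I would verify the boundary inequalities required by Theorem \ref{thm:comparison}. On $\partial^v \calD$, $\Tr_v(w) = M \geq g_+ \geq g_1 = \Tr_v(u)$, and on $\partial^x_+ \calD$, $\tr_x(w) = M \geq g_+ \geq g_2 = \tr_x(u)$. Applying Theorem \ref{thm:comparison} (with $u$ as the subsolution and $w$ as the supersolution) yields $u \leq M$ a.e.~in $\calD$. Moreover, inspection of the proof of Theorem \ref{thm:comparison} shows that the same argument simultaneously produces $\tr_x(u - w) \leq 0$ a.e.~on $\partial^x_- \calD$, hence $\tr_x(u) \leq M$ on $\partial^x_- \calD$. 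This establishes the first pair of inequalities.

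For the $\abs{\,\cdot\,}$ bounds, I would apply the same reasoning with $u$ replaced by $-u$, which is again a weak solution to \eqref{eq:bvp2} (with $f = 0$) but now with boundary data $-g$. Since $\sup_{\partial_{\mathrm{hyp}} \calD} (-g)_+ = \sup_{\partial_{\mathrm{hyp}} \calD} g_- \leq \sup_{\partial_{\mathrm{hyp}} \calD} \abs{g}$, we obtain $-u \leq \sup_{\partial_{\mathrm{hyp}}\calD}\abs{g}$ in $\calD$ and analogously for $\tr_x(u)$ on $\partial^x_- \calD$. Combining with the upper bounds yields $\sup_{\calD}\abs{u} \leq \sup_{\partial_{\mathrm{hyp}}\calD}\abs{g}$ and $\sup_{\partial^x_-\calD}\abs{\tr_x(u)} \leq \sup_{\partial_{\mathrm{hyp}}\calD}\abs{g}$.

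For the $\rmC^{0,1}$ case, the argument is identical, invoking the second assertion of Theorem \ref{thm:comparison}; the hypothesis $u - w \in \rmL^\infty(\calD)$ needed there is automatic, since $w \equiv M$ is constant and $u$ is assumed bounded. The proof is essentially routine once the comparison principle is at hand; the only subtle point is recognizing that the argument in the proof of Theorem \ref{thm:comparison} delivers the trace inequality on $\partial^x_- \calD$ \emph{for free}, which is precisely what is needed for the $\tr_x(u)$ part of the statement.
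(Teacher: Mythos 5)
Your proposal is correct and matches the paper's own argument: the paper proves this corollary precisely by noting that constants solve $-\L C = 0$ in the sense of \cref{def:weaksol} and then invoking \cref{thm:comparison} (whose proof indeed also yields the trace inequality on $\partial^x_-\calD$), exactly as you do with $w \equiv \sup_{\partial_{\mathrm{hyp}}\calD} g_+$ and with $-u$ for the absolute-value bounds. The only cosmetic point is that for constant $w$ the term $\iint_\calD w\,(v\cdot\nabla_x\varphi)\,\rmd\xi$ vanishes by integrating by parts in $x$ (for compactly supported $\varphi$), not because $v\cdot\nabla_x w$ appears in the weak formulation, but this does not affect the argument.
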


\subsection{Boundary regularity}
A consequence of the regularity of the time-dependent equation, see~\cite{Zhu22} and the convergence of the trace of the convolution-translation (see \cref{thm:contran:converge}), is the boundary regularity of the weak solution.

\begin{lemma} \label{thm:reg}
    Let $\calD\subset\R^{2n}$, and let \cref{assump:1,assump:D0} hold, where $\partial \calU \in \rmC^{1,1}$.
    Let $\norm{f}_{\rmL^\infty(\calD)}<\infty$.
    Suppose $u\in\rmH^1_{\mathrm{hyp}}(\calD)$ is a bounded weak solution to \cref{eq:bvp2} in the sense of \cref{def:dirichletweaksol}.
    If $g\in \rmC(\partial_{\mathrm{hyp}} \calD)$ then $u\in \rmC(\overline{\calD})$ and $u = \tr_x(u)$ on $\partial^x \calD$.
    Furthermore, if $g\in \rmC^\alpha(\partial_{\mathrm{hyp}} \calD)$ for some $0<\alpha<1$, then $u\in \rmC^\beta(\overline{\calD})$ for some $0<\beta\leq \alpha$.
\end{lemma}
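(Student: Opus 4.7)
The plan is to reduce the stationary problem to the evolutionary one, apply the boundary regularity results of~\cite{Zhu22} together with the interior estimate of~\cref{thm:holderint}, and then identify the weak trace with the pointwise restriction using~\cref{thm:contran:converge}. Concretely, set $\tilde{u}(t,x,v) := u(x,v)$ on $\calD_T := (0,T) \times \calD$ for any $T > 0$. A direct computation shows that $\tilde{u}$ is a weak solution of $\partial_t \tilde{u} - \mathscr{L}\tilde{u} = f$ on $\calD_T$, and because $\tilde u$ is independent of $t$, the initial condition $\tilde u(0,\cdot)=u$ and the lateral data $\tilde u = g$ on $(0,T]\times \partial_\mathrm{hyp}\calD$ match compatibly on all of $\partial_\mathrm{kin}\calD_T$, so the problem for $\tilde u$ is of the type covered in~\cite{Zhu22}.

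The boundary regularity results of~\cite{Zhu22} for the time-dependent kinetic Fokker-Planck equation then furnish, at each $(x_0,v_0)\in\partial_\mathrm{hyp}\calD$ and any $t_0\in (0,T)$, a modulus of continuity for $\tilde u$ at $(t_0,x_0,v_0)$ depending only on the modulus of continuity of $g$ near $(x_0,v_0)$, on $\norm{u}_{\rmL^\infty(\calD)}$, and on $\norm{f}_{\rmL^\infty(\calD)}$. Since $\tilde u$ is constant in $t$, this transfers to the same modulus of continuity for $u$ at $(x_0,v_0)$, yielding $u\in\rmC(\overline{\calD})$ (respectively $u\in\rmC^\beta(\overline{\calD})$) in the two stated cases. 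For points of $\partial^x_-\calD\setminus\partial_\mathrm{hyp}\calD$, continuity follows instead from the Galilean-invariant interior H\"older estimate~\cref{thm:holderint}: any such point is the forward endpoint of an admissible curve that, when run backward in $t$ inside $\calD_T$, lies strictly in the interior, so applying~\cref{thm:holderint} in a small Galilean cylinder around a lifted interior point transfers the regularity up to this portion of the boundary.

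To establish $u = \tr_x(u)$ on $\partial^x\calD$, let $u_{\star k}$ be the convolution-translation approximation from~\cref{thm:contran:converge}. Each $u_{\star k}$ is smooth up to the boundary, so its weak trace agrees with the classical restriction $u_{\star k}|_{\partial^x\calD}$. On the one hand, $u_{\star k} \to u$ in $\rmH^1_{\mathrm{hyp}}(\calD)$, so continuity of the weak trace operator (\cref{thm:weaktrace}) gives $\tr_x(u_{\star k}) \to \tr_x(u)$ in $\rmL^2_\mathrm{loc}(\partial^x\calD, \abs{v\cdot\mathbf{n}_x}^2)$. On the other hand, continuity of $u$ on $\overline{\calD}$ together with the definition of the convolution-translation yields $u_{\star k}|_{\partial^x\calD} \to u|_{\partial^x\calD}$ locally uniformly, so the two limits must agree. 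The main obstacle I expect lies at the degenerate set $\partial^x_0\calD$, where $v\cdot\mathbf{n}_x = 0$: the barrier constructions of~\cite{Zhu22} are most delicate there, and it is precisely the $\rmC^{1,1}$ regularity of $\partial\calU$ that is needed to control uniformly how characteristics depart from and re-enter $\calU$ near such tangential points.
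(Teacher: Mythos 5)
Your proposal is correct and follows essentially the same route as the paper: boundary regularity is obtained by viewing $u$ as a time-independent solution of $\partial_t u - \L u = f$ and invoking~\cite{Zhu22}, and the identification $u = \tr_x(u)$ on $\partial^x\calD$ is proved exactly as in the paper, by comparing the two limits of the convolution-translations $u_{\star k}$ on $\partial^x\calD$ (convergence to $\tr_x(u)$ in $\rmL^2_{\mathrm{loc}}(\partial^x\calD,\abs{v\cdot\mathbf{n}_x}^2)$ from \cref{thm:contran:converge,thm:weaktrace} versus uniform convergence to $u$ from the continuity just established). Your extra sketch for the outflow portion $\partial^x_-\calD$ via \cref{thm:holderint} is material the paper simply folds into the citation of~\cite{Zhu22}, so it changes nothing essential.
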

\begin{proof}
    As the stationary solution is simply a constant in time solution to the evolutionary equation $\partial_t u - \L u = f$, the boundary regularity follows from~\cite{Zhu22} and the proof is omitted.

    To prove that $u = \tr_x(u)$, it suffices to show that $u_{\star k}\to u$ in $\rmL^2_{\mathrm{loc}}(\partial^x\calD, \abs{v\cdot\mathbf{n}_x}^2)$, where $u_{\star k}$ is the convolution-translation of $u$ in $\calD$ defined in the beginning of this section.
    Indeed,
    \begin{equation*}
        \iint_{\partial^x\calD} \abs{u_{\star k} - u}^2\,\mathrm{d}S(x,v)
        \leq \iint_{\partial^x\calD}  \sup_{y\in B_{1/k}(x - 2\mathbf{n}(x)/k)} C\abs{y - x}^\beta \,\mathrm{d}S(x,v),
    \end{equation*}
    where $C$ is a constant and $\beta$ is the H\"older exponent and both are independent of $k$ (the term $\abs{y-x}^\beta$ can be replaced by other modulus of continuity).
    By sending $k\to\infty$ we get the desired result.

\end{proof}

\section{Existence of weak solutions}
\subsection{Lions-Lax-Milgram theorem}\label{sec:weakexist:lions}
In this section, we present the Lions-Lax-Milgram approach.
Indeed, we are motivated by~\cite{AAMN21}, see also~\cite{FSGM23}.
Again, note that we need to assume that $\mathbf{b}$ satisfies \cref{assump:posdiv}.
\begin{proof}[Proof of \cref{thm:1}]
    W.L.O.G.~assume that $g_1 = 0$.
    Consider the bilinear form
    \begin{equation*}
        E(u,\psi) := \iint_\calD \left(\bfA\nabla_v u\right)\cdot\nabla_v \psi - (\mathbf{b}\cdot\nabla_v u)\psi + u (v\cdot\nabla_x\psi)\,\mathrm{d}\xi,
    \end{equation*}
    for all $u\in \rmL^2(\calU;\rmH^1_0 (\calV))$ and all
    $\psi\in \Upsilon:=\{\psi \in \rmH^1(\calD)\cap \rmL^2(\calU;\rmH^1_0(\calV)) \textrm{ s.t. } \Tr(\psi)|_{\partial^x_-\calD} = 0\}$.
    The space of test functions $\Upsilon$ is equipped with the norm
    \begin{equation*}
        \norm{\psi}_\Upsilon^2 := \iint_\calD (\psi^2 +\abs{\nabla_v \psi}^2) \,\mathrm{d}\xi + \iint_{\partial^x_+ \calD} v\cdot\mathbf{n}_x \Tr(\psi)^2\,\mathrm{d}S.
    \end{equation*}
    It is obvious that $\Upsilon$ is continuously embedded in $\rmL^2(\calU;\rmH^1_0 (\calV))$, namely
    $\norm{\psi}^2_{\rmL^2(\calU;\rmH^1_0 (\calV))} \leq \norm{\psi}_\Upsilon^2$.
    Next, for $\psi\in \Upsilon$, using \cref{assump:posdiv} it holds that
    \begin{equation*}
        E (\psi,\psi) = \iint_{\calD} \left(\bfA\nabla_v \psi\right)\cdot\nabla_v \psi - (\mathbf{b}\cdot\nabla_v \psi)\psi \,\mathrm{d}\xi + \frac{1}{2}\int_{\partial^x_+\calD} v\cdot\mathbf{n}_x \mathrm{Tr}(\psi)^2 \,\mathrm{d}S
        \geq C \norm{\psi}_\Upsilon^2 .
    \end{equation*}
    Therefore, the bilinear form $E$ is $\Upsilon$-coercive.
    Now we define a functional $F\in \Upsilon^*$ as follows
    \begin{equation*}
        F(\psi) := \iint_{\calD} f \psi\,\mathrm{d}\xi + \int_{\partial^x_+\calD} v\cdot\mathbf{n}_x g_2 \Tr(\psi)\,\mathrm{d}S.
    \end{equation*}
    It is clear that $F \in \Upsilon^\ast$ since by duality and Cauchy-Schwartz
    \begin{align*}
        |F(\psi)|
        \leq &
        \|f\|_{\rmL^2(\calU; \rmH^{-1}(\calV))}\|\psi\|_{\rmL^2(\calU; \rmH^1(\calV))} + \|g_2\|_{\rmL^2(\partial^x_+ \calD; \abs{v \cdot \mathbf{n}_x})} \|\Tr(\psi)\|_{\rmL^2(\partial^x_+ \calD; \abs{v \cdot \mathbf{n}_x})}
        \\
        \leq &
        C \norm{\psi}_\Upsilon.
    \end{align*}
    It then follows from the Lions-Lax-Milgram theorem, see~\cite[Theorem 2.1]{Sho97}, that for any $F\in \Upsilon^*$ there is a solution $u\in \rmL^2(\calU;\rmH^1_0(\calV))$ to the problem $E(u,\psi) = F(\psi)$ for all $\psi \in \Upsilon$.

    Now by \cref{thm:weaktrace} we have that there exists a trace function $\tr_x(u)$ such that the renormalization formula \cref{eq:renormal} holds. As such, $\tr_x(u) |_{\partial^x_+\calD} = g_2$ and the proof is complete.

\end{proof}

\subsection{Vanishing viscosity approximation}\label{sec:weakexist:vs}
In this section, we present the vanishing viscosity method coupled with a Robin boundary condition similar to~\cite{Zhu22}.

Throughout this section, we let \cref{assump:1,assump:D0,assump:posdiv} hold.

Let's consider the following problem:
\begin{equation}\label{eq:bvp3}
    \left\{
    \begin{array}{rll}
        -\mathscr{L}^\varepsilon u
         & = f,                          & \textrm{in } \mathcal{D},            \\
        u
         & = g_1,                        & \textrm{on } \partial^v \mathcal{D}, \\
        \varepsilon \frac{\partial u}{\partial \mathbf{n}_x} + (v\cdot\mathbf{n}_x)_+ u
         & = (v\cdot\mathbf{n}_x)_+ g_2, & \textrm{on } \partial^x \mathcal{D},
    \end{array}\right.
\end{equation}
where $\frac{\partial u}{\partial \mathbf{n}_x}:= \mathbf{n}_x\cdot\nabla_x u$, and $g_1$ and $g_2$ represent boundary data as in \cref{eq:bdydata}.
The operator $\mathscr{L}^\varepsilon$ is defined as:
\begin{equation}\label{eq:opeps}
    \mathscr{L}^\varepsilon := \mathscr{L} + \varepsilon \Delta_x = \nabla_\xi\cdot(\widehat{\mathbf{A}}_\varepsilon \nabla_\xi) + \widehat{\mathbf{b}}\cdot\nabla_\xi ,
\end{equation}
where
\begin{equation*}
    \widehat{\mathbf{A}}_\varepsilon:= \begin{pmatrix}
        \varepsilon \mathrm{I} & 0          \\
        0                      & \mathbf{A}
    \end{pmatrix}
    \textrm{ and }
    \widehat{\mathbf{b}}:= \begin{pmatrix}
        v \\
        \mathbf{b}
    \end{pmatrix}.
\end{equation*}

We remark that the Robin boundary condition is canonical from \cref{eq:opeps} in the sense that if we test \cref{eq:opeps} with the solution itself and integrate by parts then we can see the contribution of Robin boundary terms.

Now, let's define weak solutions to \cref{eq:bvp3}:
\begin{definition}
    Let $f\in\rmL^2(\calU;\rmH^{-1}(\calV))$, $g_1\in \rmH^1(\calD)$ and $g_2\in \rmL^2(\partial^x \calD,\abs{v\cdot\mathbf{n}_x})$.
    We say that $u\in \rmH^1(\calD)$ is a weak solution to \cref{eq:bvp3} if $u-g_1 \in \rmH^1_{\partial^v}(\calD)$ and for all $\varphi\in \rmH^1_{\partial^v}(\calD)$, the following holds:
    \begin{multline}\label{def:epsweak}
        \iint_\calD \left(\widehat{\bfA}_\varepsilon \nabla_\xi u\right)\cdot \nabla_\xi \varphi - (\widehat{\mathbf{b}}\cdot \nabla_\xi u) \varphi\,\mathrm{d}\xi + \int_{\partial^x \calD} (v\cdot\mathbf{n}_x)_+ \Tr (u) \varphi \,\mathrm{d}S \\
        = \iint_\calD f\varphi\,\mathrm{d}\xi + \int_{\partial^x \calD} (v\cdot\mathbf{n}_x)_+ g_2 \varphi\,\mathrm{d}S,
    \end{multline}
    where $\mathrm{d}S$ is the standard surface measure on $\partial \calD$.
\end{definition}
\begin{lemma}\label{thm:epsmax}
    For each fixed $\varepsilon$, if $u_\varepsilon$ is a weak solution to \cref{eq:bvp3} with $f=0$ then
    \begin{equation*}
        \sup_{\partial^x \calD} u_\varepsilon \leq \sup_\calD u_\varepsilon \leq \sup_{\partial_\mathrm{hyp} \calD} g_+
    \end{equation*}
\end{lemma}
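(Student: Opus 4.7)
The plan is to test the weak formulation of \cref{eq:bvp3} with the truncation $w := (u_\varepsilon - k)_+$, where $k := \sup_{\partial_\mathrm{hyp}\calD} g_+$, which we may assume finite (otherwise there is nothing to prove). Since $u_\varepsilon - g_1 \in \rmH^1_{\partial^v}(\calD)$ and $g_1 \leq g_+ \leq k$ on $\partial^v\calD$, the trace of $w$ vanishes on $\partial^v\calD$, so $w \in \rmH^1_{\partial^v}(\calD)$ is admissible as a test function.

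Plugging $\varphi = w$ into \cref{def:epsweak} with $f=0$, the key step is to rewrite the drift term via the chain rule $w\,\nabla_\xi u_\varepsilon = \tfrac{1}{2}\nabla_\xi w^2$ and integrate by parts. Since $\widehat{\mathbf{b}} = (v,\mathbf{b})$ has $\nabla_\xi \cdot \widehat{\mathbf{b}} = \nabla_v \cdot \mathbf{b} \geq 0$ by \cref{assump:posdiv}, and the boundary contribution on $\partial^v\calD$ vanishes, I would obtain
\[
-\iint_\calD (\widehat{\mathbf{b}} \cdot \nabla_\xi u_\varepsilon)\,w \,\rmd\xi
= \tfrac{1}{2}\iint_\calD (\nabla_v \cdot \mathbf{b})\, w^2 \,\rmd\xi - \tfrac{1}{2}\int_{\partial^x\calD}(v\cdot\mathbf{n}_x)\, \Tr(w)^2 \,\rmd S.
\]
On the support of $\Tr(w)$ one has $\Tr(u_\varepsilon)\Tr(w) = \Tr(w)^2 + k\Tr(w)$, and using $g_2 \leq g_+ \leq k$ on $\partial^x_+\calD$ the whole identity collapses to a sum of manifestly non-negative quantities equal to zero: the bulk energy $\iint_\calD (\widehat{\bfA}_\varepsilon \nabla_\xi w)\cdot\nabla_\xi w\,\rmd\xi$, the divergence term $\tfrac12\iint_\calD(\nabla_v\cdot\mathbf{b})w^2\,\rmd\xi$, the boundary terms $\tfrac{1}{2}\int_{\partial^x_+\calD}(v\cdot\mathbf{n}_x)\Tr(w)^2\,\rmd S$ and $\tfrac{1}{2}\int_{\partial^x_-\calD}\abs{v\cdot\mathbf{n}_x}\Tr(w)^2\,\rmd S$, and finally $\int_{\partial^x_+\calD}(v\cdot\mathbf{n}_x)(k-g_2)\Tr(w)\,\rmd S$.

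Since each of these must then vanish, the ellipticity of $\widehat{\bfA}_\varepsilon$ forces $\nabla_\xi w \equiv 0$; combined with $w|_{\partial^v\calD} = 0$ this gives $w \equiv 0$ on each connected component of $\calD$. Hence $u_\varepsilon \leq k$ a.e.\ in $\calD$, and taking traces gives $\Tr(u_\varepsilon) \leq k$ a.e.\ on $\partial^x\calD$, which yields both asserted inequalities.

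The main obstacle is that the chain rule $w\,\nabla_\xi u_\varepsilon = \tfrac{1}{2}\nabla_\xi w^2$ and the subsequent integration by parts require $w^2 \in \rmH^1(\calD)$, which is not automatic without \emph{a priori} boundedness of $u_\varepsilon$. I would handle this in the standard way by replacing $w$ with $w \wedge M$ in the test function, performing the computation at each level $M$ where $(w\wedge M)^2 \in \rmH^1(\calD)$ is justified, and passing $M\to\infty$ using monotone convergence together with the uniform non-negativity of every term in the identity.
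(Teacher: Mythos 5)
Your argument is correct, but it is a genuinely different route from the paper's. The paper proves the second inequality by testing with $\varphi=(u_\varepsilon-M-k)_+$, \emph{keeping} the drift term $(\widehat{\mathbf{b}}\cdot\nabla_\xi u_\varepsilon)\varphi$ as is, and reducing to the classical energy inequality of Gilbarg--Trudinger Theorem 8.1, whose Sobolev-embedding/level-set argument needs only uniform ellipticity of $\widehat{\bfA}_\varepsilon$ (true for fixed $\varepsilon$) and boundedness of $\widehat{\mathbf{b}}$ on the bounded set $\calD$ --- in particular it never invokes \cref{assump:posdiv}. You instead integrate the drift by parts, use $\nabla_\xi\cdot\widehat{\mathbf{b}}=\nabla_v\cdot\mathbf{b}\ge 0$ together with the Robin structure of \cref{def:epsweak}, and obtain in one shot an identity whose terms are all non-negative at the sharp level $k=\sup_{\partial_{\mathrm{hyp}}\calD}g_+$, so that the energy term forces $\nabla_\xi(u_\varepsilon-k)_+\equiv 0$ and the zero trace on $\partial^v\calD$ (via \cref{ineq:poincare}) gives $(u_\varepsilon-k)_+\equiv 0$. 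Your route buys a shorter, self-contained proof (no Sobolev embedding or measure-theoretic iteration) that additionally yields the trace bound on all of $\partial^x\calD$ directly; the paper's route buys independence from the sign condition on $\nabla_v\cdot\mathbf{b}$ and avoids pairing the merely measurable $\mathbf{b}$ against $\nabla_v(w^2)$. Two small points of care in your version: the ``term'' $\tfrac12\iint(\nabla_v\cdot\mathbf{b})w^2\,\rmd\xi$ must be read distributionally (slice-wise in $v$, using that $w^2(x,\cdot)$ has zero trace on $\partial\calV$, exactly as the paper does in the proof of \cref{thm:comparison}); and your truncation at level $M$ is a fine way to justify the chain rule, though for the smooth part $v\cdot\nabla_x$ it is not strictly needed since $w\in\rmH^1(\calD)$ already gives $w^2\in\rmW^{1,1}(\calD)$ with $\rmL^1$ trace, so Gauss--Green applies; since \cref{assump:posdiv} is a standing assumption of that section, relying on it is legitimate.
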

\begin{proof}
    The first inequality is trivial.

    For the second inequality, we follow the proof of~\cite[Theorem 8.1]{GT01}.
    Let's take $\varphi = (u - M-k )_+$ where $M=\sup_{\partial_\mathrm{hyp} D} g_+$ and $0\leq k <\sup_{\calD} u-M$ (if no such $k$ exists we are done).
    It follows that $\varphi\in \rmH^1_{\partial^v}(\calD)$ and in particular $\varphi\geq 0$, hence
    \begin{equation*}
        \iint_D \left(\widehat{\bfA}_\varepsilon \nabla_\xi u \right)\cdot \nabla_\xi \varphi - (\widehat{\mathbf{b}}\cdot \nabla_\xi u) \varphi\,\mathrm{d}z \leq  \int_{\partial^x D} (v\cdot\mathbf{n}_x)_+ (g_2 - \Tr(u)) \varphi\,\mathrm{d}S \leq 0,
    \end{equation*}
    which is equivalent to
    \begin{equation*}
        \iint_D \left(\widehat{\bfA}_\varepsilon \nabla_\xi \varphi \right)\cdot \nabla_\xi \varphi \,\mathrm{d}z \leq \int_{\Sigma} (\widehat{\mathbf{b}}\cdot \nabla_\xi \varphi) \varphi\,\mathrm{d}\xi
    \end{equation*}
    where $\Sigma = \mathrm{supp}(\nabla_\xi \varphi)\subset\mathrm{supp}(\varphi)$.
    Now this is the same energy inequality as in the proof of~\cite[Theorem 8.1]{GT01}, and we can proceed in the same way to conclude the lemma.

\end{proof}
\begin{proposition}\label{thm:epstime:exist}
    Let $\calD\subset\R^{2n}$, and let \cref{assump:1,assump:D0,assump:posdiv} hold.
    If $f\in \rmL^2(\calU;\rmH^{-1}(\calV))$, $g_1\in \rmH^1(\calD)$ and $g_2\in \rmL^2(\partial^x \calD,\abs{v\cdot\mathbf{n}_x})$, then for each fixed $\varepsilon$ there exists a weak solution $u_\varepsilon\in \rmH^1(\calD)$ to the Dirichlet problem \cref{eq:bvp3}.
\end{proposition}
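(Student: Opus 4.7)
My plan is to prove \cref{thm:epstime:exist} by applying the Lax-Milgram theorem on the Hilbert space $\rmH^1_{\partial^v}(\calD)$, which is a closed subspace of $\rmH^1(\calD)$ by continuity of the standard trace. First, writing $u=w+g_1$ with $w\in\rmH^1_{\partial^v}(\calD)$, the boundary condition on $\partial^v\calD$ is absorbed, and \cref{def:epsweak} becomes $B(w,\varphi)=\widetilde F(\varphi)$, where $B$ is the bilinear form on the left-hand side of \cref{def:epsweak} and $\widetilde F(\varphi):=F(\varphi)-B(g_1,\varphi)$ with $F$ the linear form on the right. Since $g_1\in \rmH^1(\calD)$, the functional $\varphi\mapsto B(g_1,\varphi)$ is bounded on $\rmH^1_{\partial^v}(\calD)$, so $\widetilde F$ remains a bounded linear functional.

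Continuity of $B$ follows from \cref{assump:1}, the uniform bound on $|v|$ over the bounded $\calV$, and the $\rmH^1(\calD)\to\rmL^2(\partial\calD)$ trace inequality for the Robin term. Continuity of $F$ uses the duality between $\rmL^2(\calU;\rmH^{-1}(\calV))$ and $\rmL^2(\calU;\rmH^1_0(\calV))$ for the source, combined with the weighted Cauchy-Schwarz estimate
\[
    \left|\int_{\partial^x\calD}(v\cdot\mathbf{n}_x)_+ g_2\Tr(\varphi)\,\rmd S\right|
    \leq \|g_2\|_{\rmL^2(\partial^x\calD,|v\cdot\mathbf{n}_x|)}\|\Tr(\varphi)\|_{\rmL^2(\partial^x\calD,|v\cdot\mathbf{n}_x|)},
\]
followed by the trace inequality.

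The main step is coercivity. Testing with $\varphi=w$ and splitting $\widehat{\mathbf{b}}\cdot\nabla_\xi w = v\cdot\nabla_x w + \mathbf{b}\cdot\nabla_v w$, the divergence theorem together with $\nabla_x\cdot v=0$, $w=0$ on $\partial^v\calD$, and the fact that the outer normal of $\partial^v\calD$ has vanishing $x$-component, yields
\[
    -\iint_\calD (v\cdot\nabla_x w)\,w\,\rmd\xi \;=\; -\tfrac{1}{2}\int_{\partial^x\calD}(v\cdot\mathbf{n}_x)\,w^2\,\rmd S.
\]
Integration by parts in $v$, using $w=0$ on $\partial^v\calD$ and the fact that $\mathbf{n}_v=0$ on $\partial^x\calD$, together with \cref{assump:posdiv}, gives
\[
    -\iint_\calD (\mathbf{b}\cdot\nabla_v w)\,w\,\rmd\xi \;=\; \tfrac{1}{2}\iint_\calD(\nabla_v\cdot\mathbf{b})\,w^2\,\rmd\xi \;\geq\; 0.
\]
Combining these with the Robin contribution and using the identity $(v\cdot\mathbf{n}_x)_+-\tfrac{1}{2}(v\cdot\mathbf{n}_x)=\tfrac{1}{2}|v\cdot\mathbf{n}_x|$, one obtains
\[
    B(w,w)\;\geq\;\min(\varepsilon,\lambda)\,\|\nabla_\xi w\|_{\rmL^2(\calD)}^2 \;+\; \tfrac{1}{2}\int_{\partial^x\calD}|v\cdot\mathbf{n}_x|\,w^2\,\rmd S.
\]
Finally, the Poincar\'e inequality \cref{ineq:poincare}, valid since $w\in\rmL^2(\calU;\rmH^1_0(\calV))$, upgrades $\|\nabla_\xi w\|_{\rmL^2(\calD)}^2$ to the full $\rmH^1(\calD)$-norm, producing $\rmH^1$-coercivity of $B$, and Lax-Milgram delivers the desired weak solution. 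The one technical point requiring care is justifying the divergence-theorem identities above; I would do this by first approximating $w$ by $\rmC^\infty(\overline{\calD})$ functions vanishing near $\partial^v\calD$ (which is possible since $\partial\calV\in\rmC^{0,1}$) and then passing to the limit using the $\rmH^1$-continuity of the trace.
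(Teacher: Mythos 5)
Your proposal is correct and follows essentially the same route as the paper: the paper's proof is precisely an application of the Lax--Milgram theorem to the bilinear form $E$ and functional $F$ of \cref{def:epsweak} on $\rmH^1_{\partial^v}(\calD)$, with the remaining details delegated to~\cite[Theorem 8.3]{GT01}. You simply spell out the coercivity computation (skew-symmetry of the transport term, the Robin compensation giving $\tfrac12\abs{v\cdot\mathbf{n}_x}$, \cref{assump:posdiv}, and the Poincar\'e inequality \cref{ineq:poincare}) that the paper leaves implicit, at the same level of rigor as the paper's analogous estimates.
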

\begin{proof}
    This proof is an application of the Lax-Milgram theorem, just to note that now we have the bilinear form
    \begin{equation*}
        E(u, \varphi):= \iint_\calD \left(\widehat{\bfA}_\varepsilon \nabla_\xi u\right)\cdot \nabla_\xi \varphi  - (\widehat{\mathbf{b}}\cdot \nabla_\xi u) \varphi\,\mathrm{d}\xi+ \int_{\partial^x \calD} (v\cdot\mathbf{n}_x)_+ \Tr (u) \varphi \,\mathrm{d}S,
    \end{equation*}
    and a linear functional
    \begin{equation*}
        F(\varphi):= \iint_D f \varphi \,\mathrm{d}\xi + \int_{\partial^x D} (v\cdot\mathbf{n}_x)_+ g_2 \varphi\,\mathrm{d}S.
    \end{equation*}
    The rest of the proof is similar to that of~\cite[Theorem 8.3]{GT01}.

\end{proof}
\begin{lemma}\label{thm:epsenergy}
    Let $\calD \subset \R^{2n}$, and let \cref{assump:1,assump:D0,assump:posdiv} hold.
    Let $g_1 = 0$ and $g_2\in \rmL^2(\partial^x \calD, \abs{v\cdot\mathbf{n}_x})$.
    Then for each fixed $\varepsilon$ the weak solution $u_\varepsilon$ to \cref{eq:bvp3} satisfies the following energy estimate
    \begin{multline*}
        \iint_{\calD} \left(\varepsilon \abs{\nabla_x u_\varepsilon}^2 + \frac{\lambda}{4}\abs{\nabla_v u_\varepsilon}^2 + \frac{\lambda}{8C_P}\abs{u_\varepsilon}^2\right)\,\mathrm{d}\xi
        + \frac{1}{4}\int_{\partial^x \calD} \abs{v\cdot\mathbf{n}_x} u_\varepsilon^2\,\mathrm{d}S \\
        \leq \frac{2C_P}{\lambda} \iint_{\calD} f^2 \mathrm{d}\xi + \int_{\partial^x \calD} (v\cdot\mathrm{n}_x)_+ g_2^2\,\mathrm{d}S,
    \end{multline*}
    where $\lambda$ is from \cref{assump:1}.
\end{lemma}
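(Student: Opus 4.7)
The plan is to test the weak formulation \cref{def:epsweak} with $\varphi = u_\varepsilon$ itself, which is admissible since $g_1 = 0$ forces $u_\varepsilon \in \rmH^1_{\partial^v}(\calD)$. This yields the identity
\begin{equation*}
\iint_\calD (\widehat{\bfA}_\varepsilon \nabla_\xi u_\varepsilon) \cdot \nabla_\xi u_\varepsilon - (\widehat{\mathbf{b}} \cdot \nabla_\xi u_\varepsilon)\, u_\varepsilon\, \rmd\xi + \int_{\partial^x \calD} (v \cdot \mathbf{n}_x)_+ u_\varepsilon^2 \,\rmd S = \iint_\calD f u_\varepsilon\, \rmd\xi + \int_{\partial^x \calD} (v \cdot \mathbf{n}_x)_+ g_2 u_\varepsilon \,\rmd S.
\end{equation*}
The first term on the left is bounded below by $\varepsilon \|\nabla_x u_\varepsilon\|_{\rmL^2}^2 + \lambda \|\nabla_v u_\varepsilon\|_{\rmL^2}^2$ via \cref{assump:1}.

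Next, I would handle the drift contribution by writing $\widehat{\mathbf{b}} \cdot \nabla_\xi u_\varepsilon = v \cdot \nabla_x u_\varepsilon + \mathbf{b} \cdot \nabla_v u_\varepsilon$ and rewriting $(v \cdot \nabla_x u_\varepsilon) u_\varepsilon = \tfrac{1}{2} v \cdot \nabla_x(u_\varepsilon^2)$ and similarly for the $\mathbf{b}$ term. Integration by parts in $x$ on the product domain gives a boundary term only on $\partial^x \calD$ (since the outward normal on $\partial^v\calD$ is purely in $v$), producing $-\tfrac{1}{2}\int_{\partial^x \calD} v \cdot \mathbf{n}_x\, u_\varepsilon^2\, \rmd S$. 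Integration by parts in $v$ for the $\mathbf{b}$-term produces no $\partial^v \calD$ boundary contribution because $u_\varepsilon(x,\cdot) \in \rmH^1_0(\calV)$, and yields the interior term $\tfrac{1}{2}\iint (\nabla_v \cdot \mathbf{b})\, u_\varepsilon^2\, \rmd\xi \geq 0$ by \cref{assump:posdiv}. Combining the two boundary terms via the elementary identity $-\tfrac{1}{2}(v \cdot \mathbf{n}_x) + (v \cdot \mathbf{n}_x)_+ = \tfrac{1}{2}|v \cdot \mathbf{n}_x|$ converts them into $\tfrac{1}{2}\int_{\partial^x \calD} |v \cdot \mathbf{n}_x|\, u_\varepsilon^2\, \rmd S$.

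At this stage the left-hand side reads $\varepsilon \|\nabla_x u_\varepsilon\|^2 + \lambda \|\nabla_v u_\varepsilon\|^2 + \tfrac{1}{2}\int_{\partial^x \calD}|v\cdot \mathbf{n}_x| u_\varepsilon^2\,\rmd S$, and the right-hand side is $\iint f u_\varepsilon\,\rmd\xi + \int_{\partial^x \calD}(v \cdot \mathbf{n}_x)_+ g_2 u_\varepsilon\,\rmd S$. I then apply weighted Young's inequalities: on the forcing term, $\iint f u_\varepsilon \leq \tfrac{2 C_P}{\lambda} \iint f^2 + \tfrac{\lambda}{8 C_P}\|u_\varepsilon\|_{\rmL^2}^2 + \tfrac{\lambda}{4}\|\nabla_v u_\varepsilon\|^2$ (splitting with a factor of $\|u_\varepsilon\|$ converted via Poincaré, \cref{ineq:poincare}); on the boundary term, $\int (v\cdot\mathbf{n}_x)_+ g_2 u_\varepsilon \leq \int (v\cdot\mathbf{n}_x)_+ g_2^2 + \tfrac{1}{4}\int (v\cdot\mathbf{n}_x)_+ u_\varepsilon^2$. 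The last term absorbs into a quarter of $\tfrac{1}{2}\int |v\cdot \mathbf{n}_x|u_\varepsilon^2$ on the left, leaving exactly $\tfrac{1}{4}\int |v\cdot \mathbf{n}_x|u_\varepsilon^2$.

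There is no real obstacle here; the main care is bookkeeping the constants in the Young inequalities so that the absorbed pieces precisely leave $\tfrac{\lambda}{4}\|\nabla_v u_\varepsilon\|^2$, $\tfrac{\lambda}{8 C_P}\|u_\varepsilon\|_{\rmL^2}^2$, and $\tfrac{1}{4}\int |v\cdot \mathbf{n}_x|u_\varepsilon^2$ on the left-hand side, and handling the $\varepsilon \Delta_x$ contribution cleanly. The key structural ingredients are the admissibility of $u_\varepsilon$ as a test function (which is precisely why the Robin-type condition at $\partial^x \calD$ was chosen), the sign condition on $\nabla_v \cdot \mathbf{b}$ (\cref{assump:posdiv}), and the Poincaré inequality \cref{ineq:poincare}.
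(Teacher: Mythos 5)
Your proposal is correct and follows exactly the paper's (one-line) argument: test \cref{def:epsweak} with $\varphi = u_\varepsilon$, use the ellipticity from \cref{assump:1}, the sign condition \cref{assump:posdiv}, the boundary identity $-\tfrac12 (v\cdot\mathbf{n}_x)+(v\cdot\mathbf{n}_x)_+=\tfrac12\abs{v\cdot\mathbf{n}_x}$, and the Poincaré inequality \cref{ineq:poincare} with Young's inequality. The only remark is that your final constant bookkeeping (generating the $\tfrac{\lambda}{8C_P}\norm{u_\varepsilon}^2$ term on the left via Poincaré) is exactly the routine step the paper leaves implicit, so nothing is missing.
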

\begin{proof}
    It follows immediately by taking $\varphi = u_\varepsilon$ in \cref{def:epsweak} and using \cref{assump:1} and \cref{ineq:poincare}.

\end{proof}
By letting $\varepsilon\to 0$ we obtain the following existence result for \cref{eq:bvp2} for Lipschitz domains.
\begin{proposition}\label{thm:epslimit}
    Let $\calD\subset\R^{2n}$, and let \cref{assump:1,assump:D0,assump:posdiv} hold.
    If $g_1 \in \rmH^1 (\calD)$, $g_2 \in \rmL^2(\partial^x D, \abs{v\cdot\mathbf{n}_x})$ and $f\in \rmL^2(\calU;\rmH^{-1}(\calV))$,
    then there exists a pair of functions $u\in \rmL^2(\calU;\rmH^1(\calV))$ and $u_{\Gamma}\in\rmL^2(\partial^x \calD, \abs{v\cdot\mathbf{n}_x})$ such that $u_\varepsilon\to u$ in $\rmL^2(\calU;\rmH^1(\calV))$ and $\Tr(u_\varepsilon)\to u_{\Gamma}$ in $\rmL^2(\partial^x_- \calD, \abs{v\cdot\mathbf{n}_x})$.
    Furthermore, $(u,u_\Gamma)$ is a weak solution pair to \cref{eq:bvp2} in the sense of \cref{def:weaksol:c01}.
\end{proposition}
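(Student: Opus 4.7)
The plan is to pass to the limit $\varepsilon \to 0$ in the Robin-type weak formulation \cref{def:epsweak} satisfied by the solutions $u_\varepsilon$ furnished by \cref{thm:epstime:exist}, using \cref{thm:epsenergy} for uniform compactness.

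First, I would perform a standard lifting by setting $\tilde u_\varepsilon := u_\varepsilon - g_1 \in \rmH^1_{\partial^v}(\calD)$, which solves a Dirichlet-Robin problem of the same type with $g_1 = 0$ and modified source/boundary data whose norms are bounded uniformly in $\varepsilon$ (the only $\varepsilon$-dependent contribution, coming from $\varepsilon\Delta_x g_1$, is absorbed by the viscous dissipation). Applying \cref{thm:epsenergy} then yields
\begin{equation*}
\sup_{\varepsilon>0}\Bigl(\norm{u_\varepsilon}_{\rmL^2(\calU;\rmH^1(\calV))} + \sqrt{\varepsilon}\,\norm{\nabla_x u_\varepsilon}_{\rmL^2(\calD)} + \norm{\Tr(u_\varepsilon)}_{\rmL^2(\partial^x\calD,\abs{v\cdot\mathbf{n}_x})}\Bigr) < \infty.
\end{equation*}
By reflexivity, along a subsequence, $u_\varepsilon \rightharpoonup u$ weakly in $\rmL^2(\calU;\rmH^1(\calV))$, $\Tr(u_\varepsilon) \rightharpoonup u_\Gamma$ weakly in $\rmL^2(\partial^x\calD,\abs{v\cdot\mathbf{n}_x})$, and $\varepsilon\nabla_x u_\varepsilon \to 0$ strongly in $\rmL^2(\calD)$.

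Next, I would pass to the limit in \cref{def:epsweak} against test functions $\varphi \in \rmC^1(\overline{\calD})$ with $\varphi|_{\partial^v\calD}=0$, which lie in $\rmH^1_{\partial^v}(\calD)$ and are thus admissible. Integrating the transport term $-(v\cdot\nabla_x u_\varepsilon)\varphi$ by parts in $x$ (using $\nabla_x \cdot v = 0$) produces the desired interior term $u_\varepsilon(v\cdot\nabla_x\varphi)$ together with a boundary contribution that combines with the Robin term via $(v\cdot\mathbf{n}_x)_+ - (v\cdot\mathbf{n}_x) = -(v\cdot\mathbf{n}_x)_-$ into the single integral
\begin{equation*}
\int_{\partial^x\calD}\bigl[(v\cdot\mathbf{n}_x)_+ g_2 - (v\cdot\mathbf{n}_x)_-\Tr(u_\varepsilon)\bigr]\varphi\,\mathrm{d}S.
\end{equation*}
The $\varepsilon\nabla_x u_\varepsilon\cdot\nabla_x\varphi$ term vanishes by Cauchy-Schwarz, and the remaining linear terms pass to the limit by weak convergence. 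This identifies $(u,u_\Gamma)$ as a weak solution pair in the sense of \cref{def:weaksol:c01}, with $u_\Gamma = g_2$ on $\partial^x_+\calD$ emerging directly from the $(v\cdot\mathbf{n}_x)_+ g_2$ factor on the right-hand side.

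The strong convergence is obtained by a quadratic energy identity. Testing \cref{def:epsweak} with $u_\varepsilon - g_1 \in \rmH^1_{\partial^v}(\calD)$, using $u_\varepsilon\,v\cdot\nabla_x u_\varepsilon = \tfrac{1}{2}v\cdot\nabla_x(u_\varepsilon^2)$, and invoking \cref{assump:posdiv} to sign the $\mathbf{b}$-term after integration by parts, yields (in the reduced case $g_1=0$) the identity
\begin{equation*}
\iint_\calD \varepsilon\abs{\nabla_x u_\varepsilon}^2 + (\bfA\nabla_v u_\varepsilon)\cdot\nabla_v u_\varepsilon + \tfrac{1}{2}(\nabla_v\cdot\mathbf{b})u_\varepsilon^2\,\mathrm{d}\xi + \tfrac{1}{2}\int_{\partial^x\calD}\abs{v\cdot\mathbf{n}_x}\Tr(u_\varepsilon)^2\,\mathrm{d}S = \mathcal{R}(u_\varepsilon,\Tr(u_\varepsilon)),
\end{equation*}
where $\mathcal{R}$ is a continuous linear functional of its arguments (involving $f$ and $g_2$), so $\mathcal{R}(u_\varepsilon,\Tr(u_\varepsilon)) \to \mathcal{R}(u,u_\Gamma)$ by weak convergence. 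Matching this against the analogous energy identity for the limit pair $(u,u_\Gamma)$ forces convergence of the quadratic norms, and the Radon-Riesz property of the Hilbert spaces $\rmL^2(\calU;\rmH^1(\calV))$ and $\rmL^2(\partial^x_-\calD,\abs{v\cdot\mathbf{n}_x})$ then promotes weak to strong convergence for both $u_\varepsilon$ and $\Tr(u_\varepsilon)|_{\partial^x_-\calD}$.

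The main obstacle will be justifying the limit energy identity for $(u,u_\Gamma)$: the natural test function $u - g_1$ is not a priori $\rmC^1(\overline{\calD})$, so cannot be substituted directly into the Green's formula of \cref{def:weaksol:c01}. Under only $\rmC^{0,1}$ regularity of $\partial\calU$ the renormalization \cref{thm:renormal} is not available, and one must instead approximate $u - g_1$ by admissible smooth test functions, using the uniform bound on $\Tr(u_\varepsilon)$ in $\rmL^2(\partial^x\calD,\abs{v\cdot\mathbf{n}_x})$ together with weak lower semicontinuity of the nonnegative quadratic terms to squeeze the inequalities into the equality needed to pin down the limit.
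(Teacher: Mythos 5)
Your weak-convergence part is fine (the uniform bounds from \cref{thm:epsenergy}, the strong convergence $\varepsilon\nabla_x u_\varepsilon\to 0$, and passage to the limit in the Green's formula against $\varphi\in\rmC^1(\overline{\calD})$ with $\varphi|_{\partial^v\calD}=0$, with $u_\Gamma:=g_2$ on $\partial^x_+\calD$), but the step that carries the actual content of the proposition --- \emph{strong} convergence of $u_\varepsilon$ in $\rmL^2(\calU;\rmH^1(\calV))$ and of $\Tr(u_\varepsilon)$ on $\partial^x_-\calD$ --- has a genuine gap. Your plan is to match the $\varepsilon$-level energy identity with an energy identity for the limit pair $(u,u_\Gamma)$ and invoke Radon--Riesz. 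But the limit identity requires testing the limit equation with (an approximation of) $u-g_1$ itself, i.e.\ exactly the ``testing against the solution'' that the hypoelliptic structure forbids: $u$ only lies in $\rmL^2(\calU;\rmH^1(\calV))$, its trace on $\partial^x\calD$ is only the weak trace, and in the present setting $\partial\calU$ is merely $\rmC^{0,1}$, so the renormalized Green's formula \cref{thm:renormal} is not available (and its $\rmC^{0,1}$ variant needs $u\in\rmL^\infty$, which is not known a priori here). Your proposed repair --- approximate $u-g_1$ by admissible test functions and use weak lower semicontinuity --- only yields the inequality $\liminf$ (energy of $u_\varepsilon$) $\geq$ (energy of $u$); to upgrade weak to strong convergence you need the reverse inequality, which is precisely the unjustified limit identity, so the argument is circular. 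There is also a bookkeeping obstruction on $\partial^x_+\calD$: the weak limit of $\Tr(u_\varepsilon)$ there need not equal $g_2$ (the paper's remark after the proof stresses this), so $\mathcal{R}(u_\varepsilon,\Tr(u_\varepsilon))$ converges to a quantity involving this unidentified limit rather than $\mathcal{R}(u,u_\Gamma)$, and the matching cannot close. Finally, your compactness only gives subsequential convergence; without a uniqueness statement (unavailable in the $\rmC^{0,1}$ case without boundedness) you cannot conclude convergence of the whole family.

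The paper circumvents all of this with a more elementary device: fix $\varepsilon_k=k^{-2}$, write the equation satisfied by $h_k=u_{\varepsilon_k}-u_{\varepsilon_{k+1}}$ (which has the same Robin structure with source $\varepsilon_{k+1}\nabla_x u_{\varepsilon_k}-\varepsilon_k\nabla_x u_{\varepsilon_{k+1}}$), and test it with $h_k$ --- legitimate because each $u_{\varepsilon_k}\in\rmH^1(\calD)$. The resulting estimate carries the factor $(\sqrt{\varepsilon_k}-\sqrt{\varepsilon_{k+1}})^2/\sqrt{\varepsilon_k\varepsilon_{k+1}}=1/(k(k+1))\to0$ against uniformly bounded energies, so $u_{\varepsilon_k}$ is Cauchy in $\rmL^2(\calU;\rmH^1(\calV))$ and $\Tr(u_{\varepsilon_k})$ is Cauchy in $\rmL^2(\partial^x\calD,\abs{v\cdot\mathbf{n}_x})$. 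Strong convergence is thus obtained purely at the level of the viscous approximations, never touching the limit equation's energy identity; if you want to salvage your write-up, this Cauchy-difference argument is the missing ingredient to substitute for the Radon--Riesz step.
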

\begin{proof}
    Following the lines of~\cite{Zhu22} adapted to our setting.
    W.L.O.G.~we assume that $g_1 = 0$.
    Let $\varepsilon_k = \frac{1}{k^2}$, and we do the energy estimates for $\mathscr{L}^{\varepsilon_k} u_{\varepsilon_k} - \mathscr{L}^{\varepsilon_{k+1}} u_{\varepsilon_{k+1}}$.
    Define $h_k := u_{\varepsilon_k}-u_{\varepsilon_{k+1}}$, for all $\varphi\in \rmH^1_{\partial^v }(\calD)$ it follows that
    \begin{multline*}
        \iint_\calD (\bfA \nabla_v h_k)\cdot\nabla_v \varphi +(\varepsilon_k + \varepsilon_{k+1}) \nabla_x h_k \cdot\nabla_x\varphi -  (\mathbf{b}\cdot\nabla_v h_k) + (v\cdot\nabla_x h_k)\varphi\,\mathrm{d}\xi \\
        + \int_{\partial^x \calD} (v\cdot\mathbf{n}_x)_+ h_k\varphi\,\mathrm{d}S=\iint_\calD (\varepsilon_{k+1} \nabla_x u_{\varepsilon_{k}} -\varepsilon_k\nabla_x u_{\varepsilon_{k+1}})\cdot\nabla_x \varphi\,\mathrm{d}\xi .
    \end{multline*}
    This is an equation of the same structure as \cref{eq:bvp3} hence we proceed similarly as in the proof of \cref{thm:epsenergy} to obtain
    \begin{align*}
        \begin{split}
            \iint_\calD (\varepsilon_k +\varepsilon_{k+1})\abs{\nabla_x h_k}^2 +\frac{\lambda}{4}\abs{\nabla_v h_k}^2 + \frac{\lambda}{4C_P} \abs{h_k}^2 \,\mathrm{d}\xi + \frac{1}{2}\int_{\partial^x \calD} \abs{v\cdot\mathbf{n}_x} h_k^2 \,\mathrm{d}S \\
            \leq \iint_\calD (\varepsilon_{k+1} \nabla_x u_{\varepsilon_{k}} -\varepsilon_k\nabla_x u_{\varepsilon_{k+1}})\cdot\nabla_x h_k \,\mathrm{d}\xi \Rightarrow
        \end{split} \\
        \begin{split}
            \iint_\calD \abs{\sqrt{\varepsilon_k}\nabla_x u_{\varepsilon_k}-\sqrt{\varepsilon_{k+1}}\nabla_x u_{\varepsilon_{k+1}}}^2 +\frac{\lambda}{4}\abs{\nabla_v h_k}^2 + \frac{\lambda}{4C_P} \abs{h_k}^2 \,\mathrm{d}\xi + \frac{1}{2}\int_{\partial^x \calD} \abs{v\cdot\mathbf{n}_x} h_k^2 \,\mathrm{d}S \\
            \leq \frac{(\sqrt{\varepsilon_k}-\sqrt{\varepsilon_{k+1}})^2}{\sqrt{\varepsilon_k \varepsilon_{k+1}}}\iint_\calD \varepsilon_k \abs{\nabla_x u_{\varepsilon_k}}^2 + \varepsilon_{k+1}\abs{\nabla_x u_{\varepsilon_{k+1}}}^2\,\mathrm{d}\xi
        \end{split}
    \end{align*}
    Observe that $\frac{(\sqrt{\varepsilon_k}-\sqrt{\varepsilon_{k+1}})^2}{\sqrt{\varepsilon_k \varepsilon_{k+1}}} = \frac{1}{k(k+1)}\to 0$ as $k\to\infty$
    and $\int_\calD \varepsilon_k \abs{\nabla_x u_{\varepsilon_k}}^2 \,\mathrm{d}\xi$ is uniformly bounded for all $\varepsilon_k$ due to \cref{thm:epsenergy}.
    Hence, $h_k$ is Cauchy in $\rmL^2(\calU; \rmH^1(\calV))$, $\Tr(h_k)$ is Cauchy in $\rmL^2(\partial^x \calD, \abs{v\cdot\mathbf{n}_x})$ and $\sqrt{\varepsilon_k}\nabla_x u_{\varepsilon_k}$ is Cauchy in $\rmL^2(\calD)$,
    which implies that there is a function $u\in \rmL^2(\calU; \rmH^1(\calV))$ and a function $\widehat{u}_{\Gamma}\in \rmL^2(\partial^x \calD; |v \cdot n_x|)$ such that
    \begin{gather}
        \nonumber u_{\varepsilon} \to u,\textrm{ and } \nabla_v u_\varepsilon \to \nabla_v u,\textrm{ in } \rmL^2 (\calD),\\
        {\varepsilon}\nabla_x u_\varepsilon \to 0 \textrm{ in } \rmL^2(\calD),\label{eq:epsconverge}\\
        \nonumber \Tr(u_\varepsilon) \to \widehat u_{\Gamma} \textrm{ in } \rmL^2 (\partial^x \calD, \abs{v\cdot\mathbf{n}_x}).
    \end{gather}
    We now use integration by parts on the $v\cdot\nabla_x $ term to rewrite \cref{def:epsweak} in the following form
    \begin{multline}\label{def:epsweak2}
        \iint_\calD (\bfA\nabla_v u_\varepsilon)\cdot\nabla_v \varphi + \varepsilon \nabla_x u_\varepsilon \nabla_x \varphi - (\mathbf{b}\cdot\nabla_v u_\varepsilon) \varphi + u_\varepsilon(v\cdot\nabla_x \varphi)\,\mathrm{d}\xi  \\
        = \iint_\calD f\varphi\,\mathrm{d}\xi + \int_{\partial^x_+ \calD} v\cdot\mathbf{n}_x g_2\varphi\,\mathrm{d}S + \int_{\partial^x_- \calD} v\cdot\mathbf{n}_x u_\varepsilon \varphi\,\mathrm{d}S.
    \end{multline}
    By sending $\varepsilon\to 0$ we get from \cref{eq:epsconverge} that
    \begin{multline*}
        \iint_\calD (\bfA\nabla_v u)\cdot\nabla_v \varphi - (\mathbf{b}\cdot\nabla_v u) \varphi + u(v\cdot\nabla_x \varphi)\,\mathrm{d}\xi  \\
        = \iint_\calD f\varphi\,\mathrm{d}\xi + \int_{\partial^x_+ \calD} v\cdot\mathbf{n}_x g_2\varphi\,\mathrm{d}S + \int_{\partial^x_- \calD} v\cdot\mathbf{n}_x \widehat u_{\Gamma} \varphi\,\mathrm{d}S.
    \end{multline*}

    Finally, we define
    \begin{equation*}
        u_\Gamma :=
        \begin{cases}
            g_2,                 & \textrm{ on } \partial^x_+\calD, \\
            \widehat u_{\Gamma}, & \textrm{ on } \partial^x_-\calD,
        \end{cases}
    \end{equation*}
    and the proof is complete.

\end{proof}
\begin{remark}
    We remark that in the proof $\lim_{\varepsilon\to 0} \Tr(u_\varepsilon)$ does not necessarily equal $g_2$ on $\partial^x_+\calD$ because from the proof we cannot exclude the case that $\mathbf{n}_x\cdot\nabla_x u_\varepsilon$ blows up when $\varepsilon\to0$.
\end{remark}
\begin{corollary}\label{thm:weakmax}
    Let $u$ be the weak solution to \cref{eq:bvp2} constructed in \cref{thm:epslimit} with $f=0$, then
    \begin{equation*}
        \sup_{\partial^x_- \calD} u_{\Gamma} \leq \sup_{\calD} u \leq \sup_{\partial_\mathrm{hyp} \calD} g_+ .
    \end{equation*}
    In particular,
    \begin{equation*}
        \sup_{\partial^x_- \calD} \abs{u_{\Gamma}} \leq \sup_{\calD} \abs{u}\leq \sup_{\partial_\mathrm{hyp}\calD}\abs{g}.
    \end{equation*}
\end{corollary}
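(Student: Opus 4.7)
The plan is to pass to the limit $\varepsilon \to 0$ in the weak maximum principle for the viscous approximation (\cref{thm:epsmax}), exploiting the strong $\rmL^2$ convergences recorded in \cref{thm:epslimit}. Writing $M := \sup_{\partial_\mathrm{hyp}\calD} g_+$, \cref{thm:epsmax} delivers $u_\varepsilon \leq M$ a.e.~in $\calD$ and $\Tr(u_\varepsilon) \leq M$ a.e.~on $\partial^x \calD$ for every fixed $\varepsilon > 0$, with both bounds uniform in $\varepsilon$.

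To obtain the interior bound $\sup_\calD u \leq M$, I will use the convergence $u_\varepsilon \to u$ in $\rmL^2(\calU; \rmH^1(\calV))$ from \cref{thm:epslimit} to extract a subsequence converging a.e.~in $\calD$, so that $u \leq M$ a.e. For the trace bound $\sup_{\partial^x_- \calD} u_\Gamma \leq M$, I will apply the analogous convergence $\Tr(u_\varepsilon) \to u_\Gamma$ in $\rmL^2(\partial^x_- \calD, \abs{v\cdot\mathbf{n}_x})$ to extract a further subsequence converging $\abs{v\cdot\mathbf{n}_x}\rmd S$-a.e. Since $\abs{v\cdot\mathbf{n}_x} > 0$ on $\partial^x_- \calD$ by definition, this is equivalent to $\rmd S$-a.e.~convergence, yielding $u_\Gamma \leq M$ a.e.~there. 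Together these two bounds give the chain of inequalities in the statement, where the intermediate quantity $\sup_\calD u$ simply records that both the interior and trace essential suprema are controlled by $M$.

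For the ``in particular'' conclusion, I will exploit the linearity of the viscous construction: the viscous Robin problem \cref{eq:bvp3} and the passage to the limit are linear in the data, so if $(u, u_\Gamma)$ arises from $g$ (with $f = 0$), then $(-u, -u_\Gamma)$ arises from $-g$. Applying the bound just proved to $-u$ with boundary data $-g$ yields $-u \leq \sup_{\partial_\mathrm{hyp}\calD}(-g)_+ = \sup_{\partial_\mathrm{hyp}\calD} g_-$ and similarly for $-u_\Gamma$. Combining with the upper bounds on $u$ and $u_\Gamma$ and using $\max(\sup g_+, \sup g_-) = \sup\abs{g}$ closes the argument.

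The only step requiring care is observing that the a.e.~convergence of the traces is initially only with respect to the weighted measure $\abs{v\cdot\mathbf{n}_x}\rmd S$, and translating it into $\rmd S$-a.e.~convergence on $\partial^x_- \calD$ via the strict positivity of the weight there; the remainder is a routine measure-theoretic passage to subsequences.
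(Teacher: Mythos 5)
Your proposal is correct and follows essentially the same route as the paper, whose proof simply cites the uniform bound of \cref{thm:epsmax} together with the convergences \cref{eq:epsconverge} from \cref{thm:epslimit}; you merely spell out the routine details (a.e.\ convergent subsequences, strict positivity of $\abs{v\cdot\mathbf{n}_x}$ on $\partial^x_-\calD$, and the linearity argument with data $-g$ for the absolute-value bound). Note that, like the paper, you only establish that both suprema are bounded by $\sup_{\partial_{\mathrm{hyp}}\calD} g_+$ rather than the intermediate inequality $\sup_{\partial^x_-\calD} u_\Gamma \leq \sup_\calD u$ itself, which is the same level of detail as the original one-line proof.
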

\begin{proof}
    This follows from \cref{eq:epsconverge} and \cref{thm:epsmax}.

\end{proof}

\section{The Perron-Wiener-Brelot solution}\label{sec:pwbsol}

Using the existence and regularity theory for product domains we can construct Perron-Wiener-Brelot solutions to the stationary Kramers-Fokker-Planck equation in divergence form with rough coefficients. Throughout this section we let \cref{assump:1,assump:posdiv} hold.

We first state the following convergence lemma.
\begin{lemma}[Convergence lemma]
    Let $\Omega\subset\R^{2n}$ be an open set and let $u_k$ be a locally uniformly bounded sequence of $\mathrm{K}$-harmonic functions (see \cref{def:kharmonic}) in $\Omega$.
    Then it has a subsequence that converges locally uniformly in $\Omega$ to a $\mathrm{K}$-harmonic function.
\end{lemma}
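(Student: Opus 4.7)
The plan is to combine the interior Hölder regularity (the stationary analogue of \cref{thm:holderint}) with a Caccioppoli-type energy estimate, then apply Arzelà--Ascoli and pass to the limit in the weak formulation on each product subdomain.

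First, I would fix a compact set $K \ssubset \Omega$. Since a stationary weak solution is a constant-in-time solution of $\partial_t u - \L u = 0$, \cref{thm:holderint} (with $f = 0$) applied to the Galilean-translated cubes $Q_r(z_0)$ centered at times in the interior of some cylinder over $K$ gives that each $u_k$ is Hölder continuous in $K$, with Hölder exponent $\alpha \in (0,1)$ and Hölder seminorm controlled by $\|u_k\|_{\rmL^\infty(K')}$ for a slightly larger compact $K' \ssubset \Omega$. Since the sequence is locally uniformly bounded by hypothesis, the Hölder norms of $u_k$ on $K$ are uniformly bounded. Arzelà--Ascoli then yields a subsequence $u_{k_j}$ converging locally uniformly in $\Omega$ to some continuous function $u$, and a diagonal extraction over an exhausting sequence of compact sets upgrades this to locally uniform convergence on all of $\Omega$.

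Next I would show that $u$ is a weak solution in the sense of \cref{def:weaksol}. Fix an arbitrary product domain $\calD = \calU \times \calV \ssubset \Omega$ and a slightly larger product domain $\calD' \ssubset \Omega$. The standard Caccioppoli inequality for weak solutions of $-\L u_k = 0$ (obtained by testing with $\eta^2 u_k$ for a smooth cutoff $\eta$ supported in $\calD'$ and equal to one on $\calD$, and using \cref{assump:1,assump:posdiv} together with the local uniform $\rmL^\infty$ bound on $u_k$) gives a uniform bound
\[
    \|\nabla_v u_{k_j}\|_{\rmL^2(\calD)} \leq C,
\]
with $C$ independent of $j$. Consequently, along a further subsequence (again not relabeled), $\nabla_v u_{k_j} \rightharpoonup \nabla_v u$ weakly in $\rmL^2(\calD)$, and the uniform convergence guarantees $u_{k_j} \to u$ strongly in $\rmL^2(\calD)$.

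For any test function $\varphi \in \rmC^1_c(\calD)$, the weak formulation
\[
    \iint_{\calD} (\bfA \nabla_v u_{k_j}) \cdot \nabla_v \varphi - (\mathbf{b} \cdot \nabla_v u_{k_j}) \varphi + u_{k_j}(v \cdot \nabla_x \varphi) \, \rmd\xi = 0
\]
passes to the limit: the first two terms converge by weak $\rmL^2$ convergence of $\nabla_v u_{k_j}$ against $\rmL^\infty$ coefficients times the test function, and the last term converges by uniform convergence of $u_{k_j}$. Since $\calD$ was arbitrary, $u$ is a weak solution to $-\L u = 0$ in $\Omega$, and being already continuous it is $\mathrm{K}$-harmonic by \cref{def:kharmonic}.

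The main technical obstacle is ensuring the Hölder estimate of \cref{thm:holderint} applies uniformly on compact subsets of $\Omega$, since the estimate is formulated on specific Galilean cubes $Q_r(z_0)$; this requires covering a compact set by finitely many such cubes contained in $\Omega$, where a translation and dilation reduces to the standard cube $Q_2$ and the bound on the right-hand side depends only on $\|u_k\|_{\rmL^2}$ on the larger cube, hence is uniform.
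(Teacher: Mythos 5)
Your proposal is correct in outline, but it takes a genuinely different route from the paper in its second half. The paper's proof is a one-line reduction to the scheme of Lemma~3.4 in~\cite{KL96}: after the same first step (interior H\"older regularity from \cref{thm:holderint} applied to the time-independent extension, giving equicontinuity, then Arzel\`a--Ascoli and a diagonal argument), the limit is identified as $\mathrm{K}$-harmonic \emph{without} any energy estimate, by using the solvability of the Dirichlet problem on small product subdomains with continuous boundary data (\cref{sec:perron}, i.e.\ \cref{thm:2,thm:reg}) together with the comparison principle \cref{thm:comparison}: each $u_{k_j}$ coincides in such a subdomain with the unique solution having its own boundary values, the boundary data converge uniformly, so the maximum principle forces the uniform limit to agree with the solution having boundary data $u$, hence $u$ is $\mathrm{K}$-harmonic there. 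Your alternative — Caccioppoli bound, weak $\rmL^2$ compactness of $\nabla_v u_{k_j}$, and passage to the limit in the weak formulation — is also viable and has the merit of not invoking boundary regularity or uniqueness on subdomains, only interior estimates.

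One step in your argument needs more care than you give it: you cannot literally ``test with $\eta^2 u_k$''. As the paper stresses after \cref{thm:weaktrace}, functions in $\rmH^1_{\mathrm{hyp}}$ are not admissible test functions because $v\cdot\nabla_x(\eta^2 u_k)$ is only in $\rmL^2(\calU;\rmH^{-1}(\calV))$, so the term $\iint u_k\, v\cdot\nabla_x(\eta^2 u_k)$ is not defined by \cref{def:weaksol} and the formal identity $\iint u_k\, v\cdot\nabla_x(\eta^2 u_k) = \tfrac12 \iint u_k^2\, v\cdot\nabla_x \eta^2$ requires a commutator/renormalization argument. Within the paper this is exactly what \cref{thm:renormal} supplies: on a product subdomain with $\rmC^{1,1}$ boundary in $x$ (note that any weak solution in the sense of \cref{def:weaksol} automatically lies in $\rmH^1_{\mathrm{hyp}}$ of such subdomains, since $v\cdot\nabla_x u_k = \nabla_v\cdot(\bfA\nabla_v u_k)+\mathbf{b}\cdot\nabla_v u_k$ there), apply \cref{eq:renormal} with a (locally truncated) $\Phi(s)=s^2/2$ and $\varphi=\eta^2\in\rmC^1_c$, so the boundary integrals vanish and the Caccioppoli bound follows, uniformly in $k$ thanks to the local uniform $\rmL^\infty$ bound. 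Also note that, since $\mathbf{b}$ is bounded, the drift term can be absorbed by Cauchy--Schwarz directly and \cref{assump:posdiv} is not needed for this estimate. With this justification inserted, your limit passage and the conclusion that the continuous limit is $\mathrm{K}$-harmonic are sound.
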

\begin{proof}
    Using the existence of $\mathrm{K}$-harmonic functions, see \cref{sec:perron}, and \cref{thm:holderint,thm:comparison} we can follow the proof of Lemma 3.4 in~\cite{KL96}.

\end{proof}
Next we record the following simple observation, i.e.~that the set of $\mathrm{K}$-superharmonic functions (see \cref{def:ksuperharmonic}) is closed under taking the minimum.
\begin{lemma}
    Let $\Omega\subset\R^{2n}$ be an open set.
    If $u$ and $w$ are $\mathrm{K}$-superharmonic in $\Omega$, then the function $\min(u, w)$ is $\mathrm{K}$-superharmonic.
\end{lemma}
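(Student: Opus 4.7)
The plan is to verify the three conditions of \cref{def:ksuperharmonic} for $\min(u,w)$ one at a time, none of which requires any deep input beyond the hypotheses.

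First I would handle lower semi-continuity. This is a standard fact: for any $a \in \R$, the superlevel set satisfies
\begin{equation*}
\{\min(u,w) > a\} = \{u > a\} \cap \{w > a\},
\end{equation*}
which is an intersection of two open sets and hence open, so $\min(u,w)$ is lower semi-continuous.

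Next I would verify finiteness on a dense subset. The key observation is that $\min(u,w)(\xi) < \infty$ whenever $u(\xi) < \infty$ or $w(\xi) < \infty$. Since by hypothesis the set $\{u < \infty\}$ is already dense in $\Omega$, the set $\{\min(u,w) < \infty\}$ contains it and is therefore also dense.

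The main step is the comparison property; even here, the work is modest. Fix a product domain $\calD \ssubset \Omega$ satisfying \cref{assump:D0} (with $\partial \calU \in \rmC^{1,1}$) and a K-harmonic $h \in \rmC(\overline{\calD})$ with $h \leq \min(u,w)$ on $\partial_{\mathrm{hyp}}\calD$. Then in particular $h \leq u$ on $\partial_{\mathrm{hyp}}\calD$, so the comparison property of $u$ (which holds because $u$ is K-superharmonic) yields $h \leq u$ in $\calD$. Applying the same reasoning to $w$ gives $h \leq w$ in $\calD$. Combining these pointwise gives $h \leq \min(u,w)$ in $\calD$, as required.

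Putting the three items together concludes that $\min(u,w)$ is K-superharmonic. There is no genuine obstacle here: the only conceptual point is that the comparison principle factors through the two functions separately because $\min$ preserves the inequality on $\partial_{\mathrm{hyp}}\calD$.
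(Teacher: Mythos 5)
Your proof is correct and is exactly the routine verification the paper has in mind (the lemma is recorded there as a simple observation without proof): lower semi-continuity and dense finiteness are immediate, and the comparison property factors through $u$ and $w$ separately since $h\leq\min(u,w)$ on $\partial_{\mathrm{hyp}}\calD$ gives $h\leq u$ and $h\leq w$ there. Nothing further is needed.
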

We can extend the strong maximum principle \cref{thm:strong} to the following maximum principle for $\mathrm{K}$-superharmonic functions.
\begin{lemma}\label{thm:ksup:max}
    Let $\Omega\subset\R^{2n}$ be an open set.
    If $u$ is $\mathrm{K}$-superharmonic and if there is $\xi_0\in\Omega$ such that $u(\xi_0)=\inf_\Omega u$, then $u(\xi) = u(\xi_0)$ for all $\xi\in \overline{\mathcal{A}_{\xi_0}(\Omega)}$, where $\mathcal{A}_{\xi_0}(\Omega)$ is the attainable set from $\xi_0$ (see \cref{sec:strongmax}).
\end{lemma}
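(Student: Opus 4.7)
The plan is to reduce the statement to the strong maximum principle for weak solutions, \cref{thm:strong}, by sandwiching $u$ between the constant $m := u(\xi_0) = \inf_\Omega u$ and a $\mathrm{K}$-harmonic function on each product subdomain, and then propagating the equality $u = m$ along admissible curves by a chaining argument. Set $F := \{\xi \in \Omega : u(\xi) = m\}$; by the lower semicontinuity of $u$ together with $u \geq m$, we have $F = \{u \leq m\}$, which is closed in $\Omega$.

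The key local step is the following: if $\xi_* \in F$ and $\calD \ssubset \Omega$ is a product domain satisfying \cref{assump:D0} with $\partial\calU \in \rmC^{1,1}$ and containing $\xi_*$, then $u \equiv m$ on $\overline{\mathcal{A}_{\xi_*}(\calD)}$. To prove this, use the standard fact that a bounded-below lower semicontinuous function on a compact set is the pointwise increasing limit of continuous ones to choose $g_k \in \rmC(\partial_\mathrm{hyp}\calD)$ with $g_k \nearrow u|_{\partial_\mathrm{hyp}\calD}$ and $g_k \geq m$. Let $h_k \in \rmC(\overline{\calD})$ be the unique $\mathrm{K}$-harmonic function with boundary data $g_k$ provided by \cref{thm:2,thm:reg,thm:comparison}. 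The comparison principle \cref{thm:comparison} applied to the constant $m$ and $h_k$ gives $m \leq h_k$, while $\mathrm{K}$-superharmonicity of $u$ gives $h_k \leq u$ in $\calD$. The sequence $\{h_k\}$ is monotone increasing and locally uniformly bounded, so the convergence lemma above yields locally uniform convergence to a $\mathrm{K}$-harmonic function $h$ with $m \leq h \leq u$ in $\calD$. At $\xi_*$ we have $m \leq h(\xi_*) \leq u(\xi_*) = m$, so $h - m$ is a nonnegative weak solution with a zero at $\xi_*$, and \cref{thm:strong} yields $h \equiv m$ on $\overline{\mathcal{A}_{\xi_*}(\calD)}$; the sandwich then gives $u \equiv m$ there.

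For the global statement, take $\xi_1 \in \mathcal{A}_{\xi_0}(\Omega)$ and an admissible curve $\eta : [0,T] \to \Omega$ with $\eta(0) = \xi_0$ and $\eta(T) = \xi_1$; put $s^* := \sup\{s \in [0,T] : \eta([0,s]) \subset F\}$. Absolute continuity of $\eta$ and closedness of $F$ force $\eta(s^*) \in F$. If $s^* < T$, choose a product domain $\calD \ssubset \Omega$ as above containing $\eta(s^*)$ and a $\delta > 0$ with $\eta([s^*, s^* + \delta]) \subset \calD$. The restriction $\eta|_{[s^*, s^* + \delta]}$ is admissible (obeying the same ODE \cref{eq:stationarycurve}), so $\eta(s^* + \delta) \in \mathcal{A}_{\eta(s^*)}(\calD)$, and the local step forces $\eta(s^* + \delta) \in F$, contradicting the definition of $s^*$. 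Hence $s^* = T$ and $\xi_1 \in F$; since $F$ is closed, $\overline{\mathcal{A}_{\xi_0}(\Omega)} \subset F$, which is the claim. The main obstacle I expect is the local step: one must assemble the sandwich $m \leq h \leq u$ cleanly, which requires a continuous lower approximation on $\partial_\mathrm{hyp}\calD$ that works even when $u = +\infty$ on parts of the boundary, two-sided comparison for the Dirichlet approximants $h_k$, and identification of their monotone pointwise limit with the locally uniform limit provided by the convergence lemma; the chaining argument is then an essentially elementary consequence.
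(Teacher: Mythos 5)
Your overall architecture matches the paper's proof (approximate $u$ from below by continuous boundary data on a product cylinder, solve the Dirichlet problem, sandwich the solution between the constant $m$ and $u$ using the weak maximum principle and $\mathrm{K}$-superharmonicity, invoke the strong maximum principle \cref{thm:strong}, then chain along the admissible curve), but the crucial deduction in your local step is a non sequitur. From $h\equiv m$ on $\overline{\mathcal{A}_{\xi_*}(\calD)}$ and the sandwich $m\le h\le u$ you can only conclude $u\ge m$ on that set, which is trivially true since $m=\inf_\Omega u$; nothing in the sandwich bounds $u$ from \emph{above} at interior points of $\calD$, and indeed a strict supersolution can exceed $h$ there. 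The only place where you have an upper control on $u$ in terms of the Dirichlet approximants is on $\partial_{\mathrm{hyp}}\calD$, where $h_k=g_k$ and $g_k\nearrow u$ pointwise. This is exactly how the paper extracts equality: apply the strong maximum principle to each $h_k$ (each attains the value $m$ at $\xi_*$ by the sandwich), conclude $h_k\equiv m$ on $\overline{\mathcal{A}_{\xi_*}(\calD)}$, hence $g_k=m$ on $\partial_{\mathrm{hyp}}\calD\cap\overline{\mathcal{A}_{\xi_*}(\calD)}$, and let $k\to\infty$ to get $u=m$ \emph{only on that boundary portion}; the interior statement is never claimed at this stage. Working with each $h_k$ rather than the limit $h$ also sidesteps a secondary problem in your write-up: the asserted local uniform boundedness of $\{h_k\}$ is not justified, since the upper envelope $u$ of the sequence need not be locally bounded (it is only finite on a dense set).

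Because your chaining step uses precisely the interior conclusion — $\eta(s^*+\delta)$ lies in the open set $\calD$, not on $\partial_{\mathrm{hyp}}\calD$ — the gap propagates and the first-exit argument does not close as written. The repair is the paper's device: arrange the cylinders so that the point at which you want to conclude $u=m$ lies on the hypoelliptic boundary of the current cylinder (the curve always exits through $\partial_{\mathrm{hyp}}\calD$ by \cref{rmk:nonempty}, and the final cylinder can be chosen with the target point $\xi'$ on $\partial_{\mathrm{hyp}}\calD_N$), and propagate the equality $u=m$ from cylinder boundary to cylinder boundary rather than through interior points; the closure $\overline{\mathcal{A}_{\xi_0}(\Omega)}$ is then handled by lower semicontinuity as you indicate. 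With that modification your argument becomes essentially the paper's proof.
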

\begin{proof}
    For any $\xi'\in \mathcal{A}_{\xi_0}(\Omega)$ there is a curve $\eta:[0,T]\to\Omega$ such that $\eta(0) = \xi_0$ and $\eta(T)=\xi'$.
    We can cover the curve $\eta$ by cylinders $\calD_i\subset\Omega$, as in \cref{assump:D0}, for indices $i\in I$ and hence by compactness we can choose finitely many of them covering $\eta$, i.e.~$i=1, \dots, N$. Finally, we choose the last domain $\calD_N$ such that $\xi'\in \partial_{\mathrm{hyp}} \calD_N$, this is possible due to \cref{rmk:nonempty}.

    First, we have that $\xi_0\in \calD_1$.
    Let's consider the $\mathrm{K}$-harmonic function that satisfies (see \cref{sec:perron}) the following
    \begin{equation}\label{eq:ksup:max:1}
        \begin{cases}
            \L w = 0                             & \textrm{ in } \calD_1,                         \\
            \hphantom{\L} w = \theta_\varepsilon & \textrm{ on } \partial_{\mathrm{hyp}} \calD_1,
        \end{cases}
    \end{equation}
    where $\theta_\varepsilon:\overline{\calD_1}\to\R$, $\varepsilon>0$, is a sequence of continuous functions converging to $u$ pointwise, from below, as $\varepsilon\to 0$ and that $\inf_\Omega u\leq \theta_\varepsilon(\xi)\leq u(\xi)$ on $\partial_\mathrm{hyp} \calD_1$.
    Indeed, since $u$ is lower semi-continuous and $\overline{\calD_1}$ is compact, such $\theta_\varepsilon$ can be found.
    By the definition of $\mathrm{K}$-superharmonic functions, \cref{def:ksuperharmonic}, we have that
    \begin{equation*}
        w(\xi)\leq u(\xi) \textrm{ in } \calD_1.
    \end{equation*}
    On the other hand by the weak maximum principle, \cref{thm:weakunique}, we have
    \begin{equation*}
        w(\xi) \geq \inf_{\partial_\mathrm{hyp} \calD_1} \theta_\varepsilon \geq \inf_{\Omega} u \textrm{ in } \calD_1.
    \end{equation*}
    Hence, it follows that
    \begin{equation*}
        \inf_{\Omega} u = u(\xi_0) \geq w(\xi_0) \geq \inf_{\Omega} u,
    \end{equation*}
    which implies that
    \begin{equation*}
        \inf_{\calD_1} w = w(\xi_0) = u(\xi_0) = \inf_\Omega u.
    \end{equation*}
    Then by the strong maximum principle applied to $w$, \cref{thm:strong}, we obtain
    \begin{equation*}
        w(\xi) = u(\xi_0)=\inf_\Omega u \textrm{ for all } \xi\in \overline{\mathcal{A}_{\xi_0}(\calD_1)}.
    \end{equation*}
    Consequently, by \cref{eq:ksup:max:1}, we see that
    \begin{equation*}
        \theta_\varepsilon(\xi) = u(\xi_0) = \inf_\Omega u \textrm{ on } \partial_{\mathrm{hyp}} \calD_1 \cap \overline{\mathcal{A}_{\xi_0}(\calD_1)}.
    \end{equation*}
    By letting $\varepsilon\to 0$, we see that
    \begin{equation*}
        \theta(\xi) = u(\xi_0) = \inf_\Omega u \textrm{ on } \partial_{\mathrm{hyp}} \calD_1 \cap \overline{\mathcal{A}_{\xi_0}(\calD_1)}.
    \end{equation*}
    Specifically, by \cref{rmk:nonempty}, the above set is non-empty, and we know that $\eta$ intersects $\partial_{\mathrm{hyp}} \calD_1$ at some point $\xi_1$, i.e.~$u(\xi_1) = u(\xi_0)$. Repeating the argument above for all $i=1,\dots, N$ we obtain that $u(\xi') = u(\xi_0) = \inf_\Omega u$. This completes the proof.

\end{proof}
Recalling the definition of the upper and lower classes, \cref{def:uppersolution}, we get the following comparison principle between $\mathrm{K}$-superharmonic and $\mathrm{K}$-subharmonic functions as a consequence of \cref{thm:ksup:max}.
\begin{proposition}\label{thm:ksupsub:compare}
    Let $\Omega\subset\R^{2n}$ be a bounded open set.
    Suppose that $u$ is $\mathrm{K}$-superharmonic and $w$ is $\mathrm{K}$-subharmonic in $\Omega$.
    If $u$ and $w$ are bounded and if
    \begin{equation*}
        \limsup_{\xi\to \xi_0} w(\xi)\leq \liminf_{\xi\to \xi_0} u(\xi)
    \end{equation*}
    at each point $\xi_0\in\partial\Omega$,
    then $w\leq u$ in $\Omega$.
\end{proposition}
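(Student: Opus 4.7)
The plan is to argue by contradiction: set $M := \sup_\Omega(w - u)$ and suppose $M > 0$. Since $u$ is lower semicontinuous and $w$ upper semicontinuous, $w - u$ is upper semicontinuous on $\Omega$, and together with the boundary hypothesis $\limsup_{\xi \to \xi_0}(w - u)(\xi) \leq 0$ on $\partial\Omega$, the superlevel set $\{w - u \geq M/2\}$ is compactly contained in $\Omega$. Hence $M$ is attained at an interior point $\xi_0 \in \Omega$. I will propagate the equality $(w - u) = M$ along an admissible stationary curve starting at $\xi_0$ that reaches $\partial\Omega$ (existence guaranteed by \cref{rmk:nonempty}), thereby producing a sequence of interior points with value $M$ accumulating at a boundary point, which will contradict the boundary hypothesis.

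The key local step is: for any product domain $\calD = \calU \times \calV \ssubset \Omega$ with $\partial\calU \in \rmC^{1,1}$ and any $\xi^\circ \in \calD$ with $(w - u)(\xi^\circ) = M$, we have $(w - u) = M$ on $\overline{\mathcal{A}_{\xi^\circ}(\calD)} \cap \partial_\mathrm{hyp}\calD$. To prove it, use the semicontinuities of $u$ and $w$ on the compact set $\partial_\mathrm{hyp}\calD$ to pick continuous $\theta_\varepsilon \uparrow u$ and $\eta_\varepsilon \downarrow w$. Solving the Dirichlet problems from \cref{sec:perron} yields $\mathrm{K}$-harmonic $h_\varepsilon, g_\varepsilon$ in $\calD$ with these boundary data; by \cref{def:ksuperharmonic} together with its subharmonic analogue, $h_\varepsilon \leq u$ and $g_\varepsilon \geq w$ in $\calD$. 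The differences $\psi_\varepsilon := g_\varepsilon - h_\varepsilon$ are $\mathrm{K}$-harmonic by linearity of $\L$, monotonically decreasing in $\varepsilon$, and uniformly bounded, so the convergence lemma produces a $\mathrm{K}$-harmonic limit $\psi \geq w - u$ with $\psi(\xi^\circ) \geq M$. Applying the weak maximum principle to each $\psi_\varepsilon$ and using a standard compactness argument (based on monotonicity of $\eta_\varepsilon - \theta_\varepsilon$ and the upper semicontinuity of $w - u$) to show $\sup_{\partial_\mathrm{hyp}\calD}(\eta_\varepsilon - \theta_\varepsilon) \to \sup_{\partial_\mathrm{hyp}\calD}(w - u) \leq M$, we obtain $\sup_\calD \psi \leq M$, so $\psi$ attains the value $M$ at the interior point $\xi^\circ$. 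The strong maximum principle \cref{thm:strong} applied to the nonnegative $\mathrm{K}$-harmonic function $M - \psi$ then gives $\psi \equiv M$ on $\overline{\mathcal{A}_{\xi^\circ}(\calD)} \cap \calD$.

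To transfer this identity to $\partial_\mathrm{hyp}\calD$, define $\tilde\psi := \lim_\varepsilon \psi_\varepsilon$ on $\overline{\calD}$; being the pointwise monotone limit of continuous functions, $\tilde\psi$ is upper semicontinuous, agrees with $\psi$ on $\calD$, and agrees with $w - u$ on $\partial_\mathrm{hyp}\calD$. Given $\xi_* \in \overline{\mathcal{A}_{\xi^\circ}(\calD)} \cap \partial_\mathrm{hyp}\calD$ and a sequence $\xi_k \in \mathcal{A}_{\xi^\circ}(\calD) \cap \calD$ with $\xi_k \to \xi_*$, we have $\tilde\psi(\xi_k) = \psi(\xi_k) = M$, so upper semicontinuity of $\tilde\psi$ yields $(w - u)(\xi_*) = \tilde\psi(\xi_*) \geq M$, hence $(w - u)(\xi_*) = M$. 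Iterating this local propagation along the admissible curve from $\xi_0$, covered by finitely many such product domains, produces a sequence of points $\xi_k \in \Omega$ with $(w - u)(\xi_k) = M$ converging to some $\xi^* \in \partial\Omega$; the boundary hypothesis then forces $M \leq \limsup_{\xi \to \xi^*}(w - u)(\xi) \leq 0$, a contradiction. The main technical obstacle is the lack of a classical trace for $\psi$ on $\partial_\mathrm{hyp}\calD$; the resolution, via the upper semicontinuous extension $\tilde\psi$, is exactly what enables the stepwise passage from one product domain to the next along the admissible curve.
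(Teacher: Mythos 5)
Your argument is correct, and it reaches the conclusion by a genuinely different organization than the paper. The paper sets $h=u-w$, asserts that $h$ is $\mathrm{K}$-superharmonic, and then simply invokes the already-established minimum principle for $\mathrm{K}$-superharmonic functions (\cref{thm:ksup:max}) together with \cref{rmk:nonempty} to propagate the negative minimum along an admissible curve to $\partial\Omega$ and contradict the boundary hypothesis. You never use (and never need) the claim that the difference of a $\mathrm{K}$-superharmonic and a $\mathrm{K}$-subharmonic function is $\mathrm{K}$-superharmonic; instead you sandwich $w-u$ between differences of $\mathrm{K}$-harmonic lifts $g_\varepsilon-h_\varepsilon$ of continuous approximations of the boundary data, pass to the monotone limit $\psi$ via the convergence lemma, and apply the weak and strong maximum principles to $\psi$ itself, transferring the value $M$ to $\partial_{\mathrm{hyp}}\calD$ through the upper semicontinuous envelope $\tilde\psi$, and then chain through cylinders along the admissible curve exactly as in the proof of \cref{thm:ksup:max}. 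What your route buys is that it sidesteps the step the paper leaves implicit — for comparison-defined classes, closedness under sums/differences is not automatic and requires an insertion-of-continuous-functions argument plus linearity of the lift — at the cost of essentially re-deriving the propagation machinery of \cref{thm:ksup:max} inline rather than reusing it. Two small points to tighten, both at the paper's own level of informality: choose $\theta_\varepsilon,\eta_\varepsilon$ within the bounds of $u,w$ so that the $\psi_\varepsilon$ are uniformly bounded, and make explicit that the final accumulation at $\xi^\ast\in\partial\Omega$ is obtained by running the finite chaining on curve segments $\eta([0,s_k])$ with $s_k$ increasing to the exit time, arranging (as in the paper's proof of \cref{thm:ksup:max}) that $\eta(s_k)$ lies on the hypoelliptic boundary of the last cylinder of the $k$-th chain.
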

\begin{proof}
    Let's define $h(\xi) = u(\xi) -w(\xi)$.
    It follows that $h$ is $\mathrm{K}$-superharmonic and
    \begin{equation}\label{eq:ksupsub:compare:1}
        \liminf_{\xi\to \xi_0} h(\xi)\geq 0
    \end{equation}
    for each $\xi_0\in \partial \Omega$.

    Now suppose that there is a point $\xi'\in\Omega$ such that $h(\xi')<0$,
    then by the lower semi-continuity there is a point in $\Omega$, still denoted by $\xi'$, that $h(\xi') = \inf_\Omega h <0$.
    By \cref{thm:ksup:max} we get $h(\xi) = h(\xi')$ for all $\xi\in\overline{\mathcal{A}_{\xi'}(\Omega)}$.
    On the other hand, by \cref{rmk:nonempty}, we know there are points $\xi_0\in \partial\Omega\cap \overline{\mathcal{A}_{\xi'}(\Omega)}$ and that
    \begin{equation*}
        \liminf_{\xi \to \xi_0} h(\xi) \leq \liminf_{s\to T} h(\eta(s)) = h(\xi_0)=h(\xi') < 0,
    \end{equation*}
    where $\eta(s):[0,T]\to\Omega\cup\{ \xi_0\}$ is an admissible curve in the sense of \cref{eq:stationarycurve}, with $\eta(0) = \xi'$ and $\eta(T)=\xi_0$.
    This contradicts \cref{eq:ksupsub:compare:1} and hence the proof is complete.

\end{proof}
\begin{corollary}\label{thm:upperlower:compare}
    Let $\Omega\subset\R^{2n}$ be a bounded open set and let $g: \partial\Omega\to\R$ be a bounded function, then $\underline{H}_g \leq \overline{H}_g$.
\end{corollary}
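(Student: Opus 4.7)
The statement is a direct consequence of the comparison principle between $\mathrm{K}$-superharmonic and $\mathrm{K}$-subharmonic functions (\cref{thm:ksupsub:compare}). The plan is to show that $w \leq u$ in $\Omega$ for every $u \in \calU_g$ and every $w$ in the (analogously defined) lower class; taking the infimum over $u$ and the supremum over $w$ then yields $\underline{H}_g \leq \overline{H}_g$ pointwise.

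The only delicate point is that \cref{thm:ksupsub:compare} requires both functions to be bounded, whereas by definition $u \in \calU_g$ is only bounded below and $w$ in the lower class only bounded above. I would resolve this by truncation. Fix $M < \infty$ with $\abs{g} \leq M$ on $\partial\Omega$ and set
\[
\tilde u := \min(u, M+1), \qquad \tilde w := \max(w, -M-1).
\]
Since constants are $\mathrm{K}$-harmonic and the minimum of two $\mathrm{K}$-superharmonic functions is $\mathrm{K}$-superharmonic (by the lemma preceding \cref{thm:ksup:max}), $\tilde u$ is $\mathrm{K}$-superharmonic; symmetrically, $\tilde w$ is $\mathrm{K}$-subharmonic. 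For each $\xi_0 \in \partial\Omega$, using $\abs{g(\xi_0)} \leq M$,
\[
\liminf_{\xi \to \xi_0}\tilde u(\xi) \geq \min\bigl(\liminf_{\xi\to\xi_0} u(\xi),\, M+1\bigr) \geq \min(g(\xi_0), M+1) = g(\xi_0),
\]
and analogously $\limsup_{\xi\to\xi_0}\tilde w(\xi) \leq g(\xi_0)$. Hence $\tilde u \in \calU_g$, $\tilde w$ lies in the lower class, and both are now bounded.

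Applying \cref{thm:ksupsub:compare} to the pair $(\tilde u, \tilde w)$ yields $\tilde w \leq \tilde u$ in $\Omega$; since $w \leq \tilde w$ and $\tilde u \leq u$ pointwise, we obtain $w \leq u$ throughout $\Omega$. Taking the infimum over $u \in \calU_g$ and the supremum over $w$ in the lower class then gives $\underline{H}_g \leq \overline{H}_g$. I do not anticipate any real obstacle here: the argument reduces entirely to meeting the boundedness hypothesis of the comparison principle by a standard truncation, after which the conclusion is immediate.
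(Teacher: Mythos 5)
Your proof is correct and takes essentially the same route the paper intends: the corollary follows immediately by applying \cref{thm:ksupsub:compare} to an arbitrary member of the upper class and an arbitrary member of the lower class and then taking the infimum and supremum. Your truncation by the constants $\pm(M+1)$ (which are indeed $\mathrm{K}$-super/subharmonic, e.g.\ via \cref{thm:subisksub}) is a clean way to meet the boundedness hypothesis of \cref{thm:ksupsub:compare}, a point the paper leaves implicit.
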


\subsection{Proof of \cref{thm:pwbexist}}
Let $\Omega\subset \R^{2n}$ be an open set and let $\calD\ssubset\Omega$ be a cylinder.
If $u$ is $\mathrm{K}$-superharmonic in $\Omega$ and bounded on $\calD$, we define the \emph{$\mathrm{K}$-harmonic modification} (sometimes called harmonic lifting)
\begin{align*}
    \widetilde{u} =
    \begin{cases}
        u, & \textrm{ in } \Omega\setminus\overline{\calD}, \\
        w, & \textrm{ in } \calD,
    \end{cases}
\end{align*}
where
\begin{align*}
    w(\xi) = \sup\{ h(\xi): h\in \rmC(\overline{\calD}) \textrm{ is $\mathrm{K}$-harmonic and } h\leq u \textrm{ on } \partial_\mathrm{hyp}\calD \}.
\end{align*}
Then it is clear that $\widetilde{u}\leq u$ on $\Omega$.
Moreover, from \cref{thm:holderint}, $\widetilde{u}$ is $\mathrm{K}$-superharmonic in $\Omega$ and $\mathrm{K}$-harmonic in $\calD$.

Now we have all the ingredients to prove the existence of the Perron-Wiener-Brelot solution, \cref{thm:pwbexist}, following the proof of~\cite[Theorem 5.1]{KL96}.

\begin{flushright}
    \qedsymbol
\end{flushright}

\subsection{Resolutivity}
Next, we study the resolutivity of the Perron-Wiener-Brelot solution.

The following lemma is easy to verify from the definition of the lower and upper solution, \cref{def:uppersolution}.
\begin{lemma}
    Let $\Omega\subset\R^{2n}$ be a bounded open set.
    Let $f$ and $g$ be bounded functions on $\partial\Omega$ and let $c\geq 0$ be any constant. Then
    \begin{enumerate}
        \item $\overline{H}_c\leq c$.
        \item $\overline{H}_{cf}=c\overline{H}_f$ and $\underline{H}_{cf}=c\underline{H}_f$. If $f$ is resolutive, then $cf$ is resolutive and $H_{cf}=cH_f$.
        \item $\overline{H}_{-f}=-\underline{H}_f$. If $f$ is resolutive then so is $-f$ and $H_{-f} = -H_f$.
        \item If $f\leq g$, then $\overline{H}_f\leq \overline{H}_g$ and $\underline{H}_f\leq \underline{H}_g$.
        \item $\overline{H}_{f+g}\leq \overline{H}_f + \overline{H}_g$ and $\underline{H}_{f+g}\geq \underline{H}_f+\underline{H}_g$ whenever the sums are defined. If $f$ and $g$ are resolutive and $f+g$ is defined, then $f+g$ is resolutive and $H_{f+g} = H_f + H_g$.
    \end{enumerate}
\end{lemma}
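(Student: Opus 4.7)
My plan is to establish each item directly from the definitions of the upper and lower Perron solutions together with elementary closure properties of the classes of $\mathrm{K}$-superharmonic and $\mathrm{K}$-subharmonic functions. Items (1)--(4) are essentially bookkeeping on the upper class $\mathcal{U}_g$, while (5) reduces to a single technical lemma on closure under addition, which I expect to be the only substantive step.

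For (1), every constant $c$ is a weak solution to $-\L u = 0$ in the sense of \cref{def:weaksol}, hence $\mathrm{K}$-harmonic and a fortiori $\mathrm{K}$-superharmonic, and trivially $c \in \mathcal{U}_c$, giving $\overline{H}_c \leq c$. For (2), multiplication by $c > 0$ preserves $\mathrm{K}$-super\-harmonicity by the linearity of $\L$ and commutes with $\liminf$, bijecting $\mathcal{U}_f$ with $\mathcal{U}_{cf}$ and yielding $\overline{H}_{cf} = c\overline{H}_f$; the case $c = 0$ reduces to (1), and resolutivity is then immediate. For (3), negation swaps $\mathrm{K}$-super- and subharmonicity and interchanges $\liminf$ with $-\limsup$, bijecting $\mathcal{U}_{-f}$ with the lower class of $f$, hence $\overline{H}_{-f} = -\underline{H}_f$. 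For (4), $f \leq g$ implies $\mathcal{U}_g \subset \mathcal{U}_f$ (the boundary $\liminf$ condition for $g$ is stronger), so $\overline{H}_f \leq \overline{H}_g$; the lower version is symmetric.

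For (5), once we grant that the sum of two $\mathrm{K}$-superharmonic functions is again $\mathrm{K}$-superharmonic, then for $u \in \mathcal{U}_f$ and $v \in \mathcal{U}_g$ the sum $u + v$ is $\mathrm{K}$-superharmonic, bounded below, and satisfies $\liminf_{\xi \to \xi_0}(u+v) \geq f(\xi_0) + g(\xi_0)$, so $u + v \in \mathcal{U}_{f+g}$; taking infima over $u, v$ independently gives $\overline{H}_{f+g} \leq \overline{H}_f + \overline{H}_g$, and the lower version is symmetric. Resolutivity of $f + g$ then follows by chaining
\begin{equation*}
\overline{H}_{f+g} \leq \overline{H}_f + \overline{H}_g = \underline{H}_f + \underline{H}_g \leq \underline{H}_{f+g} \leq \overline{H}_{f+g},
\end{equation*}
where the final inequality is \cref{thm:upperlower:compare}, forcing equality throughout.

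The main obstacle is thus the closure of $\mathrm{K}$-superharmonic functions under addition. My plan is to mimic classical linear potential theory: given $\calD \ssubset \Omega$ a product cylinder and $h \in \rmC(\overline{\calD})$ $\mathrm{K}$-harmonic with $h \leq u + w$ on $\partial_{\mathrm{hyp}}\calD$, I would first reduce to bounded $u, w$ via the truncations $u_M := \min(u, M)$, $w_M := \min(w, M)$, which remain $\mathrm{K}$-superharmonic by the minimum-closure lemma preceding the statement; for $M > \sup_{\overline{\calD}} h$, the inequality $h \leq u_M + w_M$ holds on $\partial_{\mathrm{hyp}}\calD$. In the bounded case, approximate $u, w$ from below on $\partial_{\mathrm{hyp}}\calD$ by continuous $\theta_k \nearrow u$ and $\phi_k \nearrow w$, and solve the $\mathrm{K}$-harmonic Dirichlet problem in $\calD$ (using the existence statement recorded after \cref{def:kharmonic}) to obtain $H_{\theta_k} \leq u$ and $H_{\phi_k} \leq w$ in $\calD$ by $\mathrm{K}$-superharmonicity. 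Applying the weak maximum principle (\cref{thm:weakunique}) to the $\mathrm{K}$-harmonic function $h - (H_{\theta_k} + H_{\phi_k})$ reduces matters to the boundary, where a Dini-type argument on the compact $\partial_{\mathrm{hyp}}\calD$, using the monotone decrease and pointwise convergence of $h - (\theta_k + \phi_k)$ to $h - (u + w) \leq 0$, yields $h \leq u + w$ in the bounded case; finally, letting $M \to \infty$ by monotone convergence closes the argument.
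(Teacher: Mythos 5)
Your items (1)–(4) are exactly the definition-chasing the paper has in mind: the paper offers no proof at all here (it states the lemma is "easy to verify from the definition" of \cref{def:uppersolution}), so the only substantive content is your item (5), namely that the sum of two $\mathrm{K}$-superharmonic functions is again $\mathrm{K}$-superharmonic, which is precisely where linearity enters and which is not automatic for a comparison-based definition such as \cref{def:ksuperharmonic}. Your argument for this step is correct and is, in effect, a two-function version of the technique the paper itself uses in \cref{thm:ksup:max}: approximate the lower semicontinuous functions from below on the compact set $\partial_{\mathrm{hyp}}\calD$ by continuous data, solve the Dirichlet problem in the product cylinder (the existence/uniqueness statement recorded after \cref{def:kharmonic}, resting on \cref{thm:2,thm:reg,thm:comparison}), use clause (3) of \cref{def:ksuperharmonic} to get $H_{\theta_k}\leq u$, $H_{\phi_k}\leq w$, and conclude via the weak maximum principle \cref{thm:weakunique} together with the standard fact that a decreasing sequence of continuous functions on a compact set converging pointwise to a nonpositive limit eventually has supremum below any $\varepsilon>0$. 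The concluding chain for resolutivity via \cref{thm:upperlower:compare} is also right.

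Two small points to tighten. First, in the truncation step the level $M$ must exceed $\sup_{\overline{\calD}}h$ \emph{plus} the (negative of the) lower bounds of $u$ and $w$; with $M>\sup_{\overline{\calD}}h$ alone, $h\leq u_M+w_M$ can fail at boundary points where one of the functions exceeds $M$ and the other is very negative. Since members of the upper classes are bounded below, this is only a constant-bookkeeping adjustment (indeed, the truncation can be dispensed with altogether, as bounded-below lsc functions on a compact set are already increasing limits of continuous functions). Second, you verify clause (3) of \cref{def:ksuperharmonic} for $u+w$ but not clause (2): finiteness of $u+w$ on a dense set does not follow topologically from finiteness of $u$ and of $w$ on dense sets (two dense $F_\sigma$ sets may be disjoint). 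The cheap fix is to observe that in computing $\overline{H}_f$ one may replace any $u\in\mathcal{U}_f$ by $\min(u,M)$ with $M\geq\sup_{\partial\Omega}f$, which stays in $\mathcal{U}_f$ by the minimum-closure lemma and does not increase the infimum; then the members used in item (5) are bounded, their sum is finite everywhere, and your argument goes through verbatim.
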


Having all the tools at hand we can apply the proofs from~\cite{Ekl79} to obtain the following two lemmas, we include the proofs for the convenience of the reader.
\begin{lemma}\label{thm:pwblinearclass}
    Let $\Omega\subset\R^{2n}$ be a bounded open set.
    If $\{g_i\}$ is a sequence of bounded resolutive functions that converges uniformly to $g$, then $g$ is resolutive and $H_{g_i}$ converges uniformly to $H_g$.
\end{lemma}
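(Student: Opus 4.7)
The plan is to deduce this from the linearity/monotonicity properties listed in the preceding lemma, using a standard sandwich argument built on the resolutivity of constants. First I would dispatch constants: property~(1) gives $\overline{H}_c \le c$ for $c \ge 0$, and applying property~(3) to $-c$ then yields $\underline{H}_c \ge c$; combined with $\underline{H}_c \le \overline{H}_c$ from \cref{thm:upperlower:compare}, this forces $\overline{H}_c = \underline{H}_c = c$ for every $c \in \R$, i.e.\ constants are resolutive with $H_c = c$.

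Next I would use uniform convergence to trap $g$. Given $\varepsilon>0$, pick $N$ so large that $\|g_i - g\|_{\rmL^\infty(\partial\Omega)} < \varepsilon$ for all $i \ge N$. Then on $\partial\Omega$,
\begin{equation*}
g_i - \varepsilon \;\le\; g \;\le\; g_i + \varepsilon.
\end{equation*}
Since $g_i$ is resolutive by hypothesis and the constants $\pm\varepsilon$ are resolutive by the previous step, property~(5) yields that $g_i \pm \varepsilon$ are resolutive with $H_{g_i \pm \varepsilon} = H_{g_i} \pm \varepsilon$. Monotonicity (property~(4)) applied to the sandwich then gives
\begin{equation*}
H_{g_i} - \varepsilon \;=\; \underline{H}_{g_i-\varepsilon} \;\le\; \underline{H}_g \;\le\; \overline{H}_g \;\le\; \overline{H}_{g_i+\varepsilon} \;=\; H_{g_i} + \varepsilon.
\end{equation*}

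From this chain I read off two conclusions simultaneously. First, $0 \le \overline{H}_g - \underline{H}_g \le 2\varepsilon$ for every $\varepsilon>0$, hence $\overline{H}_g = \underline{H}_g$, i.e.\ $g$ is resolutive. Second, the same chain gives $|H_g - H_{g_i}| \le \varepsilon$ pointwise in $\Omega$ for all $i \ge N$, which is precisely the uniform convergence $H_{g_i} \to H_g$ in $\Omega$.

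There is no real obstacle beyond keeping the bookkeeping straight; the only delicate point is justifying that constants are resolutive, since the preceding lemma only asserts the one-sided bound $\overline{H}_c \le c$, and one must combine it with the symmetry property~(3) and the comparison \cref{thm:upperlower:compare} rather than appealing to it directly.
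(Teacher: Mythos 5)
Your argument is correct and is essentially the paper's own proof: both trap $g$ between $g_i-\varepsilon$ and $g_i+\varepsilon$ for large $i$ and use monotonicity together with the constant-shift behaviour of Perron solutions to get $H_{g_i}-\varepsilon\le\underline{H}_g\le\overline{H}_g\le H_{g_i}+\varepsilon$, which yields resolutivity of $g$ and the uniform convergence $H_{g_i}\to H_g$ simultaneously. One small slip in your treatment of constants: applying property (3) with $f=-c$ gives $\underline{H}_{-c}\ge -c$ for $c\ge 0$, not $\underline{H}_c\ge c$; the cleanest fix is to note that a constant function is $\mathrm{K}$-harmonic and hence lies in both the upper and lower classes for its own boundary data, giving $\overline{H}_c\le c\le\underline{H}_c$ directly, or alternatively to observe that the one-sided inequalities $\overline{H}_{g_i+\varepsilon}\le H_{g_i}+\varepsilon$ and $\underline{H}_{g_i-\varepsilon}\ge H_{g_i}-\varepsilon$ obtained from property (5) and the bounds you did derive already suffice for the sandwich.
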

\begin{proof}
    First let's denote by $\overline{H}_i=\overline{H}_{g_i}$ and similarly for $\underline{H}_i$ and $H_i$.
    Let $\varepsilon>0$, for sufficiently large $i$ we have $\abs{g-g_i}<\varepsilon$ on $\partial\Omega$ hence $g_i -\varepsilon<g<g_i +\varepsilon$.
    Therefore, $\abs{\overline{H}_g-\overline{H}_i}\leq \varepsilon$ on $\Omega$.
    Similarly, $\abs{\underline{H}_g -\underline{H}_i}\leq \varepsilon$ on $\Omega$.
    Since each $g_i$ is resolutive, it follows that $\underline{H}_i = \overline{H}_i$ and that
    \begin{equation*}
        \abs{\underline{H}_g - \overline{H}_g} = \abs{\underline{H}_g -\underline{H}_i + \overline{H}_i- \overline{H}_g}\leq \abs{\underline{H}_g -\underline{H}_i}+\abs{\overline{H}_g - \overline{H}_i}\leq 2\varepsilon.
    \end{equation*}
    By letting $\varepsilon\to 0$ we see $g$ is resolutive.

    The uniform convergence of $H_i$ to $H_g$ on $\Omega$ follows since $\abs{H_g - H_i} = \abs{\underline{H}_g - \underline{H}_i}\leq \varepsilon$ on $\Omega$ for sufficiently large $i$.

\end{proof}
\begin{lemma}\label{thm:pwbsubresolutive}
    Let $\Omega\subset\R^{2n}$ be an open set.
    If $u$ is a bounded $\mathrm{K}$-subharmonic function on the bounded domain $\Omega$ and $g(\xi_0) = \lim_{\xi\to \xi_0} u(\xi)$ exists for all $\xi_0\in\partial\Omega$, then $g$ is resolutive.
\end{lemma}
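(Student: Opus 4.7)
The plan is to prove $\overline{H}_g = \underline{H}_g$ in $\Omega$; the inequality $\underline{H}_g \leq \overline{H}_g$ is already supplied by \cref{thm:upperlower:compare}, so only the reverse direction remains. The lower bound $u \leq \underline{H}_g$ is essentially free: since $u$ is bounded and $\mathrm K$-subharmonic with $\limsup_{\xi\to\xi_0} u(\xi) = g(\xi_0)$ at every boundary point, $u$ belongs to the lower Perron class $\mathcal{L}_g$. A useful additional observation is that $g$ is automatically \emph{continuous} on $\partial\Omega$: given $\xi_0^k \to \xi_0$ in $\partial\Omega$, a diagonal extraction of $\eta_k \in \Omega$ with $|\eta_k - \xi_0^k| < 1/k$ and $|u(\eta_k) - g(\xi_0^k)| < 1/k$ produces $\eta_k \to \xi_0$, and the limit hypothesis at $\xi_0$ forces $u(\eta_k) \to g(\xi_0)$, whence $g(\xi_0^k) \to g(\xi_0)$.

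To establish the reverse inequality $\overline{H}_g \leq \underline{H}_g$ I will construct a $\mathrm K$-harmonic function $h$ on $\Omega$ with $h \geq u$ and $\lim_{\xi \to \xi_0} h(\xi) = g(\xi_0)$ at every $\xi_0 \in \partial\Omega$; such an $h$ lies simultaneously in $\mathcal U_g$ and $\mathcal L_g$, yielding $\overline{H}_g \leq h \leq \underline{H}_g$. The construction is by iterated harmonic replacement along an exhaustion by nested product cylinders $\calD_1 \ssubset \calD_2 \ssubset \cdots \nearrow \Omega$ (each satisfying \cref{assump:D0}): set $v_i = u$ on $\Omega \setminus \calD_i$ and, inside $\calD_i$, take $v_i$ to be the Perron solution with the upper semi-continuous datum $u|_{\partial_{\mathrm{hyp}}\calD_i}$, whose existence in product cylinders was recalled in \cref{sec:perron}. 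The comparison principle \cref{thm:comparison} yields $v_i \geq u$ and $v_{i+1} \geq v_i$ on $\Omega$, and the sequence is uniformly bounded by $\sup_\Omega u$. Since every compact subset of $\Omega$ eventually lies in some $\calD_i$ on which $v_i$ is $\mathrm K$-harmonic, the convergence lemma at the start of this section produces a $\mathrm K$-harmonic pointwise limit $h := \lim_i v_i$ on $\Omega$.

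The main obstacle is verifying that $h$ attains the correct boundary limit at every $\xi_0 \in \partial\Omega$. The $\liminf$ side is immediate from $h \geq u$. For the matching $\limsup$ bound, fix $\varepsilon > 0$; the continuity of $g$ and the boundary limit of $u$ supply a neighbourhood $V$ of $\xi_0$ with $u \leq g(\xi_0) + \varepsilon$ on $V \cap \Omega$. The delicate task is to propagate this bound through every Perron replacement $v_i$ uniformly in $i$ near $\xi_0$: in cylinders $\calD_i$ entirely contained in $V$ the weak maximum principle yields $v_i \leq g(\xi_0) + \varepsilon$ directly, but for cylinders escaping $V$ one must argue that the values of $v_i$ in a small ball around $\xi_0$ are still controlled by $u$ restricted to a small portion of $\partial_{\mathrm{hyp}}\calD_i$ near $\xi_0$. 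This localisation should follow from the H\"older estimate (\cref{thm:holderint}) and the Harnack inequality (\cref{thm:harnacktime}), which suppress the contribution of far-away boundary values. Once the localisation is in hand, letting $i \to \infty$ and then $\varepsilon \to 0$ produces $\lim_{\xi\to\xi_0} h(\xi) = g(\xi_0)$, completing the proof that $h$ is the common value of $\overline{H}_g$ and $\underline{H}_g$, and hence that $g$ is resolutive.
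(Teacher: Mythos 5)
Your opening steps are fine: $u\in\mathcal{L}_g$ gives $u\leq \underline{H}_g$, and the continuity of $g$ is a correct (if unneeded) observation. The gap is in the core of your argument. You propose to build a $\mathrm{K}$-harmonic $h\geq u$ with the \emph{full} boundary limit $\lim_{\xi\to\xi_0}h(\xi)=g(\xi_0)$ at \emph{every} $\xi_0\in\partial\Omega$, and you defer the decisive step (the ``localisation'' giving $\limsup_{\xi\to\xi_0}h\leq g(\xi_0)$) to \cref{thm:holderint,thm:harnacktime}. That step is not a technicality: it amounts to asserting that every boundary point of an arbitrary bounded open set is regular for this construction, which is false for the Kramers--Fokker--Planck operator. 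Boundary values are in general not attained on outflow-type portions of the boundary (the analogue of $\partial^x_-$, where the admissible trajectories exit) nor at irregular points such as an isolated puncture of a ball; this is exactly why the paper treats $\mathrm{K}$-regularity separately via barriers in \cref{sec:barrier}. The interior Harnack and H\"older estimates give no control of a solution up to such boundary points, so the limit $h$ of your iterated replacements will in general satisfy only $\liminf_{\xi\to\xi_0}h\geq g(\xi_0)$, not the matching $\limsup$ bound, and $h\in\mathcal{L}_g$ cannot be concluded. (A secondary issue: the data $u|_{\partial_{\mathrm{hyp}}\calD_i}$ is only upper semicontinuous, whereas the existence recalled in \cref{sec:perron} is for continuous data, so each replacement already needs an approximation argument.)

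The statement does not require any function attaining the boundary data; the paper's proof avoids the $\limsup$ issue entirely. By \cref{thm:pwbexist}, $\underline{H}_g$ is bounded and $\mathrm{K}$-harmonic, hence $\mathrm{K}$-superharmonic and bounded below; since $u\in\mathcal{L}_g$ gives $u\leq\underline{H}_g$, one gets
\begin{equation*}
    \liminf_{\xi\to\xi_0}\underline{H}_g(\xi)\;\geq\;\liminf_{\xi\to\xi_0}u(\xi)\;=\;g(\xi_0)
    \qquad\text{for all }\xi_0\in\partial\Omega,
\end{equation*}
so $\underline{H}_g\in\mathcal{U}_g$ by \cref{def:uppersolution}, whence $\overline{H}_g\leq\underline{H}_g$; combined with \cref{thm:upperlower:compare} this yields resolutivity. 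Note that membership in $\mathcal{U}_g$ needs only the $\liminf$ inequality, which your hypothesis on $u$ supplies for free --- this is precisely the observation your construction bypasses, at the cost of an unprovable boundary-attainment claim.
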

\begin{proof}
    Since $g$ is bounded, $\overline{H}_g$ and $\underline{H}_g$ are bounded and $\mathrm{K}$-harmonic on $\Omega$.
    Since $u\in \mathcal{L}_g$, we have by \cref{thm:upperlower:compare} that $u\leq \underline{H}_g$ hence
    \begin{equation*}
        \liminf_{\xi\to \xi_0} \underline{H}_g (\xi)\geq \liminf_{\xi\to \xi_0} u(\xi) = g(\xi_0)
    \end{equation*}
    for all $\xi_0\in\partial\Omega$.
    Therefore, $\underline{H}_g\in \mathcal{U}_g$ and hence $\underline{H}_g\geq \overline{H}_g$. From \cref{thm:upperlower:compare} we have that $g$ is resolutive.

\end{proof}

\begin{lemma}\label{thm:subisksub}
    Let $u\in\rmH^1_{\mathrm{hyp}}(\calD)$ be a lower semicontinuous weak subsolution, then $u$ is $\mathrm{K}$-subharmonic in $\calD$.
\end{lemma}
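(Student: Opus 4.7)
The plan is to verify the three conditions of \cref{def:ksuperharmonic} for $-u$, since $u$ is $\mathrm{K}$-subharmonic precisely when $-u$ is $\mathrm{K}$-superharmonic. I read "lower semicontinuous" in the statement as a typo for "upper semicontinuous"; this is consistent with the remark immediately after the lemma, which invokes it for the dual claim that lower semicontinuous weak supersolutions are $\mathrm{K}$-superharmonic. Under this reading, $-u$ is lower semicontinuous (condition (1)) and $u\in\rmL^2(\calD)$ is a.e.\ finite, hence finite on a dense set (condition (2)).

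The substance is condition (3). Fix a product subdomain $\calD' = \calU'\times\calV' \ssubset \calD$ with $\partial\calU'\in\rmC^{1,1}$ and a $\mathrm{K}$-harmonic $h\in\rmC(\overline{\calD'})$ satisfying $\limsup_{\calD'\ni\xi\to\xi_0} u(\xi)\leq h(\xi_0)$ for every $\xi_0\in\partial_{\mathrm{hyp}}\calD'$; the goal is $u\leq h$ in $\calD'$. For each $\varepsilon>0$ I would pick an inner product subdomain $\calD_\varepsilon = \calU_\varepsilon\times\calV_\varepsilon \ssubset \calD'$ (with $\partial\calU_\varepsilon\in\rmC^{1,1}$ and $\calD_\varepsilon\nearrow\calD'$ as $\varepsilon\to 0$) such that $u\leq h+\varepsilon$ pointwise on $\partial_{\mathrm{hyp}}\calD_\varepsilon$. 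This shrinking is admissible because $u-h$, extended by its boundary limsup, is upper semicontinuous on $\overline{\calD'}$ with values $\leq 0$ on $\partial_{\mathrm{hyp}}\calD'$, so the closed set $\{u-h\geq\varepsilon\}$ is separated from $\partial_{\mathrm{hyp}}\calD'$ by compactness. On $\calD_\varepsilon$ both $u$ and $h+\varepsilon$ lie in $\rmH^1_{\mathrm{hyp}}(\calD_\varepsilon)$ ($h$ because it is a weak solution), and $-\L u \leq 0 = -\L(h+\varepsilon)$. Applying \cref{thm:comparison} then yields $u\leq h+\varepsilon$ a.e.\ in $\calD_\varepsilon$; sending $\varepsilon\to 0$ and using upper semicontinuity of $u$ together with continuity of $h$ gives $u\leq h$ in $\calD'$.

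The main obstacle is verifying the trace hypotheses of \cref{thm:comparison}: the pointwise boundary inequality $u\leq h+\varepsilon$ on $\partial_{\mathrm{hyp}}\calD_\varepsilon$ must be upgraded to $\Tr_v(u)\leq\Tr_v(h+\varepsilon)$ on $\partial^v\calD_\varepsilon$ and $\tr_x(u)\leq h+\varepsilon$ on $\partial^x_+\calD_\varepsilon$. For $\Tr_v$ the argument is reasonably standard: $h+\varepsilon$ has continuous classical trace equal to its boundary values, and the $\rmH^1$-trace of an upper semicontinuous function can be dominated from above by near-boundary slice averages. The weak trace $\tr_x$ is more delicate. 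I would work through the convolution-translation approximation from \cref{thm:contran:converge}: the boundary value of $u_{\star k}$ on $\partial^x\calD_\varepsilon$ is a mollified average of $u$ over a small ball slightly inside the domain, which upper semicontinuity bounds above by $h(\xi_0)+\varepsilon$ in the limit $k\to\infty$, and this bound persists in the $\rmL^2_{\mathrm{loc}}(\partial^x\calD_\varepsilon, \abs{v\cdot\mathbf{n}_x}^2)$ convergence that characterizes $\tr_x(u)$. Should this limiting argument prove awkward, a clean fallback is to approximate $u$ from above by continuous weak subsolutions (using the existence/regularity theory of \cref{thm:2,thm:reg} on smaller product domains) and pass the comparison principle to the limit.
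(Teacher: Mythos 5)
Your route is the one the paper actually takes: its entire proof of \cref{thm:subisksub} is a pointer to the weak trace (\cref{thm:weaktrace}), the comparison principle (\cref{thm:comparison}), and the scheme of Lemma 4.2 in the cited work of Kilpel\"ainen--Lindqvist, and your inner-exhaustion argument --- shrink to product subdomains $\calD_\varepsilon$ on which semicontinuity gives $u\le h+\varepsilon$ in a full neighbourhood of $\partial_{\mathrm{hyp}}\calD_\varepsilon$, convert this into the $\Tr_v$ and $\tr_x$ inequalities (the latter through the convolution-translation approximation of \cref{thm:contran:converge}, whose boundary values are interior averages of $u$ and converge a.e.\ along a subsequence to $\tr_x(u)$), apply \cref{thm:comparison}, and exhaust --- is exactly what that citation hides. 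Your reading of ``lower semicontinuous'' as ``upper semicontinuous'' also matches the paper's intent (compare the remark after \cref{def:ksuperharmonic}). The shrinking step is legitimate provided you take $\calU_\varepsilon$ to be inner parallel sets of the $\rmC^{1,1}$ domain, so that normals converge and $\partial^x_+\calD_\varepsilon\cup\partial^x_0\calD_\varepsilon$ stays close to $\partial^x_+\calD'\cup\partial^x_0\calD'\subset\partial_{\mathrm{hyp}}\calD'$; that is the detail behind your compactness argument.

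The one step that does not follow as written is the last one: \cref{thm:comparison} yields $u\le h+\varepsilon$ only \emph{a.e.} in $\calD_\varepsilon$, and upper semicontinuity runs the wrong way for upgrading this to the pointwise inequality demanded by \cref{def:ksuperharmonic}: usc says $\limsup_{\zeta\to\xi}u(\zeta)\le u(\xi)$, so it allows $u(\xi)$ to exceed the essential bound on a null set (the function equal to $0$ a.e.\ with value $1$ at a single point is an upper semicontinuous weak subsolution for which the pointwise comparison with the constant $\mathrm{K}$-harmonic function $1/2$ fails). What is needed is that $u$ is the canonical representative, i.e.\ $u(\xi)\le\operatorname{ess\,limsup}_{\zeta\to\xi}u(\zeta)$ for every $\xi$; with that convention (implicit in the Kilpel\"ainen--Lindqvist statement the paper cites, and really a defect of the lemma's own formulation rather than of your argument) your chain closes, since the a.e.\ bound gives $\operatorname{ess\,limsup}_{\zeta\to\xi}u(\zeta)\le h(\xi)+\varepsilon$ at every $\xi\in\calD_\varepsilon$, and letting $\varepsilon\to0$ gives $u\le h$ pointwise in $\calD'$. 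Apart from this representative issue, your proposal is sound and coincides in substance with the paper's intended proof.
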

\begin{proof}
    Using the weak trace \cref{thm:weaktrace}, \cref{thm:comparison}, and the proof follows as in~\cite[Lemma 4.2]{KL96}.

\end{proof}

\begin{lemma} \label{thm:pwbapprox}
    Let $\Omega$ be a bounded open set in $\R^{2n}$ and let \cref{assump:1,assump:posdiv,assump:2} hold.
    If $g$ is any continuous function on $\overline{\Omega}$,
    then for any $\varepsilon>0$ there is a function $u$ which is the difference of two continuous $\mathrm{K}$-subharmonic functions on a domain containing $\overline{\Omega}$ such that $\sup_{\overline{\Omega}} \abs{u-g}<\varepsilon$.
\end{lemma}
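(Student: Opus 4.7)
My plan is to prove the lemma by a direct construction: given $g\in\rmC(\overline{\Omega})$, I will smoothly approximate $g$ on all of $\R^{2n}$ and then write the approximant as the difference of two explicit continuous $\mathrm{K}$-subharmonic functions on a bounded domain $\Omega'$ containing $\overline{\Omega}$. The key step, where \cref{assump:2} is essential, is the construction of a smooth classical subsolution whose symbol is strictly positive on $\Omega'$.

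First, by Tietze extension and mollification, for any $\varepsilon>0$ I would produce $u \in \rmC^\infty_c(\R^{2n})$ with $\sup_{\overline{\Omega}}|u-g|<\varepsilon$. Next, fix a bounded open ball $\Omega'$ with $\overline{\Omega}\subset \Omega'$ (this is the required domain). Under \cref{assump:2} the operator rewrites as
\[
\L w = \sum_{i,j} a_{ij}\,\partial^2_{v_i v_j} w + \widetilde{\mathbf{b}}\cdot\nabla_v w + v\cdot\nabla_x w, \qquad \widetilde{b}_j := b_j + \sum_i \partial_{v_i} a_{ij},
\]
with $\widetilde{\mathbf{b}}\in \rmL^\infty(\R^{2n})$. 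Taking the ansatz $\Psi(v):=\exp(\alpha v_1)$ with $\alpha$ sufficiently large depending only on $\lambda,\nu,n$, the non-divergence formula yields
\[
\L\Psi = \Psi\bigl(a_{11}\alpha^2 + \alpha \widetilde{b}_1\bigr) \geq \tfrac{\lambda}{2}\alpha^2 \Psi \geq c_0 > 0 \quad \text{on } \Omega',
\]
where $c_0$ depends on $\Omega'$, $\lambda$, $\nu$, $n$ and $\alpha$.

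Because $u$ is smooth and compactly supported, $\L u$ is bounded on $\Omega'$. Choosing $M \geq c_0^{-1}\sup_{\Omega'}|\L u|$, I obtain the pointwise classical inequalities $\L(u+M\Psi)\geq 0$ and $\L(M\Psi)\geq 0$ on $\Omega'$. Testing against any non-negative $\varphi\in\rmC^1_c(\Omega')$ and integrating by parts turns these into the weak inequality of \cref{def:weaksol}, so $u+M\Psi$ and $M\Psi$ are continuous weak subsolutions on $\Omega'$; by \cref{thm:subisksub} they are continuous $\mathrm{K}$-subharmonic on $\Omega'$. Writing $u=(u+M\Psi)-M\Psi$ therefore exhibits $u$ as the required difference, and by construction $\sup_{\overline{\Omega}}|u-g|<\varepsilon$.

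The principal obstacle is the construction of $\Psi$: the degeneracy of $\bfA$ in the $x$-direction and the transport term $v\cdot\nabla_x$ rule out standard elliptic candidates like quadratic polynomials, whose symbol cannot be made uniformly positive against the transport contribution on bounded sets. The exponential ansatz is tailored to the uniformly elliptic $v$-diffusion, and \cref{assump:2} is precisely what allows the lower-order term produced by distributing $\partial_{v_i}$ in $\nabla_v\cdot(\bfA\nabla_v\Psi)$ to be absorbed into a bounded drift, leaving the dominant positive contribution $a_{11}\alpha^2\Psi$ in control. Once $\Psi$ is available, the rest is routine approximation and integration by parts.
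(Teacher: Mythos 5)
Your proposal is correct and follows essentially the same route as the paper's proof: rewrite the operator in non-divergence form using \cref{assump:2}, approximate $g$ by a smooth function with $\L u$ bounded on a bounded neighbourhood of $\overline{\Omega}$, and exhibit $u$ as a difference of two continuous $\mathrm{K}$-subharmonic functions by adding a large multiple of an explicit exponential strict classical subsolution, passing to the weak formulation by integration by parts and invoking \cref{thm:subisksub}. The only deviation is your choice of $\Psi = e^{\alpha v_1}$ in place of the paper's $e^{\delta v\cdot(x+\mathbf{q})}$, a harmless (indeed slightly cleaner) variant since the transport term then vanishes identically while $a_{11}\geq\lambda$ still drives the positivity.
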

\begin{proof}
    Here we need to assume that $\mathbf{A}$ is differentiable w.r.t.~$v$, i.e.~\cref{assump:2}, hence the operator \cref{eq:kfp} becomes
    \begin{equation}\label{eq:bvp:nice}
        \L u = \sum_{i,j=1}^n a_{ij}\partial_{v_i }\partial_{v_j} u + \widetilde{b}_j \partial_{v_j} u + v\cdot\nabla_x u,
    \end{equation}
    where $\widetilde{b}_j := b_j + \sum_{i=1}^n \partial_{v_i} a_{ij}$.

    As in~\cite{Ekl79}, we first approximate $g$ by a polynomial $u$ such that $\sup_{\overline{\Omega}} \abs{u-g} <\varepsilon$. It suffices to find a $\rmC^1(\overline{\calD})$ weak subsolution $w$ to $\L$ such that $w - u$ is again a weak subsolution to $\L$, since $u = w - (w-u)$.

    Indeed, we claim that
    \begin{equation*}
        \mathscr{L} e^{\delta v\cdot (x+\mathbf{q})} \geq C >0
    \end{equation*}
    for some $C$ and $\mathbf{q}\in\R^n$ by choosing $\delta$ large enough.
    Computing the left-hand side we get
    \begin{equation*}
        \mathscr{L} e^{\delta v\cdot (x+\mathbf{q})}
        = \delta^2 a_{ij} (x_i+q_i)(x_j+q_j) e^{\delta v\cdot (x+\mathbf{q})} + \widetilde{b}_j \delta (x_j +q_j) e^{\delta v\cdot (x+\mathbf{q})} + \delta \abs{v}^2 e^{\delta v\cdot (x+\mathbf{q})},
    \end{equation*}
    and it is easy to verify that as long as we choose $\mathbf{q}$ such that $\abs{x+\mathbf{q}}>C'>0$ for some constant $C'>0$ and also $\delta> 0$ large enough, then the above claim is true.

    Finally, choosing $\widehat c > 0$ large enough, we can take $w = \widehat c e^{\delta v\cdot (x+\mathbf{q})}$ which gives $\L(w-u) > 1$. This completes the proof.

\end{proof}

\subsubsection{Proof of \cref{thm:pwbresolutive}} \label{sec:theoremresolutive}
The following is an adaptation of the proof in~\cite{Ekl79}.

Since $\partial\Omega$ is compact, let $K$ be an open set containing $\overline{\Omega}$. Applying \cref{thm:pwbapprox} we obtain a function $u_\varepsilon$ satisfying: $u_\varepsilon = w- h$ where $w$ and $h$ are $\mathrm{K}$-subharmonic and continuous on $K$ and
$\sup_{\partial\Omega}\abs{u_\varepsilon- g}<\varepsilon$ for fixed $\varepsilon>0$.
Then by \cref{thm:pwbsubresolutive}, $w|_{\partial\Omega}$ and $h|_{\partial\Omega}$ are resolutive boundary functions and hence $u_\varepsilon|_{\partial\Omega} = w|_{\partial\Omega} - h|_{\partial\Omega}$ is also resolutive.
It follows from \cref{thm:pwblinearclass} that $g$ is resolutive as $g$ is a uniform limit of $u_\varepsilon|_{\partial\Omega}$.
\begin{flushright}
    \qedsymbol
\end{flushright}

\section{Barriers and boundary regularity for Perron's solution}\label{sec:barrier}
We shall define the barrier function for the boundary value problem as in classical potential theory.
It gives necessary and sufficient conditions for the regularity of boundary points.
We investigate the geometry of the boundary and give some examples of barrier functions.
Throughout this section, we let \cref{assump:1,assump:2,assump:posdiv} hold, hence the operator $\L$ has the non-divergence form of \cref{eq:bvp:nice}.

\begin{definition}\label{def:barrier}
    Suppose that $\xi_0$ is a boundary point of a bounded domain $\Omega\subset\R^n\times\R^n$.
    A function $w$ is a barrier in $\Omega$ at the point $\xi_0$ if
    \begin{enumerate}
        \item $w$ is positive and $\mathrm{K}$-superharmonic in $\Omega$,
        \item $\liminf_{\zeta\to\xi} w(\zeta) >0$ if $\xi\in\partial\Omega$, $\xi\neq\xi_0$,
        \item $\lim_{\zeta\to\xi_0} w(\zeta) =0$.
    \end{enumerate}
\end{definition}
As in the classical theory, this definition is completely local:
Let $\widetilde{\Omega}$ be another domain such that $\overline{B}\cap\widetilde{\Omega} = \overline{B}\cap \Omega$ for some open ball $B$ centered at $\xi_0$.
Suppose that there is a barrier, say $\widetilde{w}$, in $\widetilde{\Omega}$ at $\xi_0$.
Let $m = \inf\{\widetilde{w}(\xi):\xi\in\partial B\cap \widetilde{\Omega}\}$.
Then $m>0$ and it follows that the function
\begin{equation*}
    w=
    \begin{cases}
        \min(\widetilde{w}, m) & \textrm{ in } B\cap\widetilde{\Omega}, \\
        m,                     & \textrm{ in } \Omega\setminus B
    \end{cases}
\end{equation*}
is a barrier in $\Omega$.
Hence, there is a barrier in $\Omega$ at $\xi_0$ exactly when there is a barrier in $\widetilde{\Omega}$.
The proof of the following result is classical, see for instance~\cite[Theorem 6.1]{KL96}
\begin{proposition}
    Suppose that $g:\partial\Omega\to\R$ is bounded and continuous at $\xi_0\in\partial\Omega$.
    If there is a barrier in $\Omega$ at $\xi_0$, then
    \begin{equation*}
        \lim_{\xi\to\xi_0} \underline{H}_g (\xi ) = g(\xi_0) = \lim_{\xi\to\xi_0}\overline{H}_g (\xi).
    \end{equation*}
\end{proposition}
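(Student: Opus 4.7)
The plan is to adapt the classical barrier argument from potential theory to this $\mathrm{K}$-potential-theoretic setting. Given $\varepsilon > 0$, I would first use the continuity of $g$ at $\xi_0$ to choose an open neighborhood $U$ of $\xi_0$ with $\abs{g(\xi)-g(\xi_0)}<\varepsilon$ on $U\cap\partial\Omega$, and set $M := \sup_{\partial\Omega}\abs{g}$, which is finite by boundedness of $g$. Property (2) of \cref{def:barrier}, applied on the compact set $\partial\Omega\setminus U$ together with the lower semicontinuity of $w$, yields $m > 0$ with $\liminf_{\zeta\to\xi}w(\zeta) \geq m$ for every $\xi\in\partial\Omega\setminus U$.

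Next, the key construction is the candidate upper-class function $u(\xi) := g(\xi_0)+\varepsilon+Kw(\xi)$, where $K>0$ is chosen so that $Km \geq 2M$. I would verify $u\in\mathcal{U}_g$: first, $u$ is $\mathrm{K}$-superharmonic since constants plus positive multiples of $\mathrm{K}$-superharmonic functions are $\mathrm{K}$-superharmonic — this follows from the linearity of $\L$, for if $h\in\rmC(\overline{\calD})$ is $\mathrm{K}$-harmonic with $h\leq g(\xi_0)+\varepsilon+Kw$ on $\partial_{\mathrm{hyp}}\calD$, then $(h-g(\xi_0)-\varepsilon)/K$ is $\mathrm{K}$-harmonic and dominated by $w$ on $\partial_{\mathrm{hyp}}\calD$, hence on $\calD$ by \cref{def:ksuperharmonic}. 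Boundedness from below is immediate because $w\geq 0$. The boundary condition $\liminf_{\xi\to\xi_0'}u(\xi)\geq g(\xi_0')$ splits into two cases: for $\xi_0'\in U\cap\partial\Omega$ it follows from $g(\xi_0')\leq g(\xi_0)+\varepsilon$ and $Kw\geq 0$; for $\xi_0'\in\partial\Omega\setminus U$ it follows from $\liminf u \geq g(\xi_0)+\varepsilon+Km \geq -M + 2M = M \geq g(\xi_0')$.

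By definition of the upper Perron solution we then have $\overline{H}_g(\xi)\leq u(\xi)$ on $\Omega$, and property (3) of the barrier gives $Kw(\xi)\to 0$ as $\xi\to\xi_0$, yielding $\limsup_{\xi\to\xi_0}\overline{H}_g(\xi) \leq g(\xi_0)+\varepsilon$. A mirror-image argument with $v(\xi) := g(\xi_0)-\varepsilon-Kw(\xi)$ — which is $\mathrm{K}$-subharmonic because $-v$ is a constant plus a positive multiple of $w$, hence $\mathrm{K}$-superharmonic — places $v$ in the lower class $\mathcal{L}_g$ and yields $\liminf_{\xi\to\xi_0}\underline{H}_g(\xi) \geq g(\xi_0)-\varepsilon$. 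Combining these two one-sided bounds with \cref{thm:upperlower:compare} ($\underline{H}_g\leq\overline{H}_g$) and sending $\varepsilon\to 0$ collapses both limits to the common value $g(\xi_0)$.

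The principal obstacle I foresee is merely bookkeeping rather than conceptual: verifying closure of the $\mathrm{K}$-superharmonic class under the affine operation $u\mapsto a+bu$ for $b>0$, and carefully handling the split of the boundary argument into a neighborhood of $\xi_0$ and its complement. Conceptually the argument is textbook; the barrier definition is tailor-made so that property (3) handles the approach to $\xi_0$, while property (2) combined with compactness of $\partial\Omega\setminus U$ and the finite choice of $K$ handle the rest of the boundary.
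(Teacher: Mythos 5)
Your proof is correct and follows essentially the same route as the paper, which does not spell the argument out but points to the classical barrier proof in~\cite{KL96}: the upper-class competitor $g(\xi_0)+\varepsilon+Kw$ (and its mirror in the lower class), with $K$ fixed via the compactness of $\partial\Omega\setminus U$ and property (2) of \cref{def:barrier}, combined with \cref{thm:upperlower:compare} and property (3) to pass to the limit at $\xi_0$. The affine-closure step you flag is indeed harmless here, since constants are weak solutions and \cref{def:ksuperharmonic} is stable under $w\mapsto a+bw$, $b>0$, exactly as you argue.
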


Finally, we give some examples of regular boundary parts for some domains.
\begin{example}\label{exp:barrier:1}
    A simple example is that if $\calD$ satisfies \cref{assump:D0}, then $\partial_\mathrm{hyp}\calD$ is regular by \cref{thm:2,thm:reg}.
\end{example}
\begin{example}\label{exp:barrier:2}
    Let $\Omega\subset\R^{2n}$ be a domain, we say $\Omega$ is \emph{velocity symmetric} if $(x,v)\in \Omega$ then $(x,-v)\in\Omega$.
    Now take a velocity symmetric $\rmC^2$ domain $\Omega\subset\R^{2n}$ and let $\mathbf{n}=(\mathbf{n}_x,\mathbf{n}_v)$ be its unit outer normal, then the boundaries of $\Omega$ can be identified as
    \begin{equation*}
        \partial^v \Omega := \{(x,v)\in\partial\Omega: \mathbf{n}_v\neq 0\},\quad
        \partial^x \Omega := \{(x,v)\in\partial\Omega: \mathbf{n}_v = 0\},
    \end{equation*}
    and similarly for $\partial^x_\pm \Omega$, $\partial^x_0 \Omega$ and $\partial_{\mathrm{hyp}}\Omega$ as in \cref{def:weak:bdy}.

    Consider any $\xi_0 = (x_0, v_0)\in\partial^x_0 \Omega$,
    we can set $\mathbf{n}_x = (0,\dots, 0, 1)$ by a change of coordinate and $\partial\Omega$ is locally a graph given by $F(x', v) = x_{n}$ where $x'= (x_1,\dots,x_{n-1})$ and $F$ is a function.
    We impose the condition that $F(x', v) \leq F(x', v_0)$ for all $v\in\R^n$ and for all $\xi_0\in\partial^x_0 \Omega$.

    We claim that all $\xi_0 = (x_0,v_0)\in\partial_\mathrm{hyp}\Omega$ are regular points.
    Indeed,
    \begin{itemize}
        \item For all $\xi_0 = (x_0, v_0)\in \partial^v\Omega \cup \partial^x_+ \Omega$ we can construct a barrier as in~\cites{Man97,DP06},
              \begin{equation*}
                  w(\xi) = e^{-\delta \abs{\mathbf{n}(\xi_0)}^2} - e^{-\delta \abs{\xi-\xi_0 -\mathbf{n}(\xi_0)}^2},
              \end{equation*}
              where $\delta=\delta(\xi_0)$ is a constant large enough.
        \item For any points $\xi_0\in\partial^x_0 \Omega$ it follows that we can put a product domain
              \begin{equation*}
                  \calD_{\xi_0} :=\begin{cases}
                      F(x', v_0) < x_n, \\
                      \abs{v} < R
                  \end{cases}
              \end{equation*}
              for some $R>0$ such that the cylinder is tangent to $\Omega$ at $\xi_0$.
              It is clear that $\calD_{\xi_0}$ satisfies \cref{assump:D0}, with $\partial \calU \in \rmC^{1,1}$.
              Let now $u$ be a $\mathrm{K}$-harmonic function (see \cref{sec:perron}) such that $u = w$ on $\partial_\mathrm{hyp} \calD_{\xi_0}$. From the strong maximum principle \cref{thm:strong}, we know that $u(\xi_0) = 0$ and $u(\xi)>0$ for all $\xi\in\calD_{\xi_0}$, hence $u$ is a barrier function for $\Omega$ at $\xi_0$.
    \end{itemize}
\end{example}
\begin{example}
    Let $B\subset\R^{2n}$ be a unit ball, then $B$ is everywhere regular.
    \begin{figure}[H]

        \includegraphics{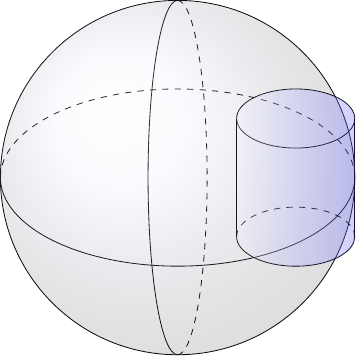}
    \end{figure}
    Indeed, observe that for any $\xi_0\in\partial B$, if $\mathbf{n}_v(\xi_0) = 0$ then $v_0 = 0$, and it is trivial to verify that $F(x',v)\leq F(x',0)$ is satisfied around all $(x_0, 0)\in\partial^x_0 B$.
    Hence, $B$ is everywhere regular.
\end{example}

\section*{Acknowledgments}
B.~Avelin and M.~Hou were supported by [Swedish Research Council dnr: 2019--04098].
The authors wish to thank Malte {Litsg\r ard} and Zhu Yuzhe for fruitful discussions.

\appendix

\section{Proof of \cref{thm:renormal}} \label{sec:appendix}

Recall \cref{def:weaksol}, we take the test function $\varphi(x, v) = \psi(y,v)\rho_k(y - 2\mathbf{n}(y)/k -x)$ for any fixed $y\in\overline{\calU}$, where $\psi(y,v)\in \rmC^\infty(\overline{\calD})$ with $\psi|_{\partial^v \calD} = 0$.
Note that we have $\varphi(x,v)=0$ when $x\in\partial\calU$ whenever $k$ is large enough, hence $\varphi$ is an admissible test function in \cref{def:weaksol}. Now, by integrating the weak formulation over $y$ we get
\begin{multline}\label{eq:contran:1}
    \iint_{\calD} \left(\bfA\nabla_v u\right)_{\star k} (y,v) \cdot\nabla_v \psi(y,v) \\
    - \left(\mathbf{b}\cdot\nabla_v u\right)_{\star k}(y,v)\psi(y,v) - (v\cdot\nabla_y u_{\star k}(y,v))\psi(y,v)\,\mathrm{d}y\mathrm{d}v \\
    = \iint_{\calD} f_{\star k}(y,v)\psi(y,v)\,\mathrm{d}y\mathrm{d}v
    - \iint_\calD R_{k}(y,v)\psi(y,v)\,\mathrm{d}y\mathrm{d}v
\end{multline}
where
\begin{equation*}
    R_k(y, v) = \frac{2}{k} \int_{\calU} u(x,v)  v\cdot(\nabla_y \mathbf{n}(y))^{\mathrm{T}} \nabla \rho_k (y-2\mathbf{n}(y)/k -x)\,\mathrm{d}x .
\end{equation*}
We can do integration by parts on the $v\cdot\nabla_y$ term to obtain
\begin{multline}\label{eq:contran:2}
    \iint_{\calD} \left(\bfA\nabla_v u\right)_{\star k} (y,v) \cdot\nabla_v \psi(y,v) - \left(\mathbf{b}\cdot\nabla_v u\right)_{\star k}(y,v)\psi(y,v) \\
    + u_{\star k}(y,v)v\cdot\nabla_y\psi(y,v)\,\mathrm{d}y\mathrm{d}v
    = \iint_{\calD} f_{\star k}(y,v)\psi(y,v)\,\mathrm{d}y\mathrm{d}v
    \\
    - \iint_\calD R_{k}(y,v)\psi(y,v)\,\mathrm{d}y\mathrm{d}v + \int_{\partial^x \calD} v\cdot\mathbf{n}_x u_{\star k}(y,v) \psi(y,v)\,\mathrm{d}S(y,v).
\end{multline}
It follows from the same argument from the proof of \cref{thm:contran:converge} that for all $\psi\in \rmC^\infty(\overline{\calD})$ with $\psi|_{\partial^v \calD}=0$ we have
$\iint_\calD R_k (y,v) \psi(y,v)\,\mathrm{d}y\mathrm{d}v \to 0 \textrm{ as } k\to\infty$.

Next, we rewrite \cref{eq:contran:1} as follows
\begin{multline}\label{eq:contran:3}
    \iint_{\calD} \bfA\nabla_v u_{\star k}  \cdot\nabla_v \psi - \mathbf{b}\cdot\nabla_v u_{\star k}\psi - (v\cdot\nabla_x u_{\star k})\psi\,\mathrm{d}x\mathrm{d}v \\
    = \iint_{\calD} f_{\star k}\psi\,\mathrm{d}x\mathrm{d}v
    - \iint_\calD R_{k}\psi\,\mathrm{d}x\mathrm{d}v - \iint_\calD E_k\psi\,\mathrm{d}x\mathrm{d}v
\end{multline}
where
\begin{equation*}
    E_k = \left(\bfA\nabla_v u\right)_{\star k} - \bfA\nabla_v u_{\star k} - \left(\mathbf{b}\cdot\nabla_v u\right)_{\star k} + \mathbf{b}\cdot\nabla_v u_{\star k}.
\end{equation*}
By the properties of the convolution-translation, it holds that $E_k \to 0$ in $\rmL^2 (\calD)$ as $k\to\infty$.

For the boundary term, we know from \cref{thm:contran:converge} together with the weak trace operator \cref{eq:weaktrace} that $u_{\star k}|_{\partial^x\calD} \to \tr_x(u)$ strongly in $\rmL^2_{\mathrm{loc}}(\partial^x\calD, \abs{v\cdot\mathbf{n}_x}^2)$.
Thus, using Cauchy-Schwartz it holds that for any $\varphi\in \rmC^1(\overline{\calD})$ and $\eta(v)\in \rmC_c^\infty(\calV)$ a non-negative cutoff function that
    {
        \begin{multline} \label{eq:contran:4}
            \abs*{\int_{\partial^x \calD} v \cdot n_x (u_{\star k}-\tr_x(u)) \varphi\eta \,\rmd S }
            \\
            \leq
            \left ( \int_{\partial^x\calD} \abs{v \cdot n_x}^2 \abs{u_{\star k} - \tr_x(u)}^2 \eta\, \rmd S \right )^{\frac{1}{2}} \left ( \int_{\partial^x\calD} \abs{\varphi}^2 \eta\, \rmd S \right )^{\frac{1   }{2}} \to 0.
        \end{multline}}
Combing the above with \cref{eq:contran:2}, we see that for $\psi \in \rmC^1(\overline{\calD})$ with $\psi|_{\partial^v \calD} = 0$,
\begin{equation*}
    \iint_{\calD} \bfA\nabla_v u \cdot\nabla_v \psi - \mathbf{b}\cdot\nabla_v u\psi - u(v\cdot \nabla_x \psi) \,\mathrm{d}x\mathrm{d}v = \iint_{\calD} f\psi\,\mathrm{d}x\mathrm{d}v + \int_{\partial^x \calD} v\cdot\mathbf{n}_x \tr_x(u) \psi\,\mathrm{d}S.
\end{equation*}

To prove the renormalized Green's formula \cref{eq:renormal} we choose $\psi(x, v) = \Phi'(u_{\star k})\phi(x,v)$ in \cref{eq:contran:3} where $\Phi$ and $\phi$ are test functions as stated in the Lemma.
Note here, if we choose $\Phi$ to satisfy condition~\eqref{item:renormal1}, then for each fixed $k$, $\Phi'(u_{\star k})\in \rmH^1(\calD)$ such that $\mathrm{Tr}_v(\Phi'(u_{\star k})) = 0$, and we can approximate it by $\rmC^1(\overline{\calD})$ functions that vanishes on $\partial^v \calD$, say $\varphi_{k,l}$, in $\rmH^1(\calD)$ norm. Hence, we obtain an equation with $\varphi_{k,l}\phi$ as test functions. By sending $l\to\infty$ for fixed $k$ and by observing $\nabla_v \varphi_{k,l}\to \Phi''(u_{\star k})\nabla_v u_{\star k}$ in $\rmL^2(\calD)$, it follows that
\begin{multline*}
    \iint_{\calD} \Phi'(u_{\star k}) \bfA\nabla_v u_{\star k}  \cdot\nabla_v \phi + \Phi''(u_{\star k})\phi \left(\bfA \nabla_v u_{\star k}\right)\nabla_v u_{\star k} - \mathbf{b}\cdot\nabla_v \left(\Phi(u_{\star k})\right) \phi\,\mathrm{d}x\mathrm{d}v\\
    - \iint_\calD (v\cdot\nabla_x \left(\Phi(u_{\star k})\right))\phi\,\mathrm{d}x\mathrm{d}v
    = \iint_{\calD} f_{\star k}\Phi'(u_{\star k})\phi\,\mathrm{d}x\mathrm{d}v
    - \iint_\calD R_{k}\Phi'(u_{\star k})\phi\,\mathrm{d}x\mathrm{d}v \\
    - \iint_\calD E_k\Phi'(u_{\star k})\phi\,\mathrm{d}x\mathrm{d}v.
\end{multline*}
By an integration by parts on the $v\cdot\nabla_x$ term, we derive
\begin{multline*}
    \iint_{\calD} \Phi'(u_{\star k}) \bfA\nabla_v u_{\star k}  \cdot\nabla_v \phi + \Phi''(u_{\star k})\phi \left(\bfA \nabla_v u_{\star k}\right)\nabla_v u_{\star k} - \mathbf{b}\cdot\nabla_v \left(\Phi(u_{\star k})\right) \phi\,\mathrm{d}x\mathrm{d}v\\
    + \iint_\calD \Phi(u_{\star k})v\cdot\nabla_x \phi\,\mathrm{d}x\mathrm{d}v - \int_{\partial^x \calD} v\cdot\mathbf{n}_x \Phi(u_{\star k})\phi\,\mathrm{d}S \\
    = \iint_{\calD} f_{\star k}\Phi'(u_{\star k})\phi\,\mathrm{d}x\mathrm{d}v
    - \iint_\calD R_{k}\Phi'(u_{\star k})\phi\,\mathrm{d}x\mathrm{d}v
    - \iint_\calD E_k\Phi'(u_{\star k})\phi\,\mathrm{d}x\mathrm{d}v.
\end{multline*}
By the convergence of $R_k$ and $E_k$ and the convergence of the boundary term in \cref{eq:contran:4}, we can let $k\to\infty$ to complete the proof of \cref{eq:renormal}.

We also note that if $u$ is bounded then we only need to know that $\Phi \in \rmW^{2,\infty}_{\textrm{loc}}$ to prove \cref{eq:renormal}.

If $\partial \calU$ is only $\rmC^{0,1}$, then we need to carefully approximate $\calU$ by $\rmC^{1,1}$ domains so that the convolution-translation can be defined, and also a trace function $u_\Gamma\in \rmL^2_{\mathrm{loc}}(\partial^x\calD, \abs{v\cdot\mathbf{n}_x}^2 )$ can be constructed, for details we refer to~\cite[Lemma 2.3]{Zhu22}.
\begin{flushright}
    \qedsymbol
\end{flushright}

\section{Proof of \cref{thm:weaktrace}}\label{sec:appendix:weaktrace}

For the sake of completeness, we present the proof of \cref{thm:weaktrace}, of which the first part can be found in~\cite{Sil22}.

\newcommand{\noncontentsline}[3]{}
\newcommand{\tocless}[2]{\bgroup\let\addcontentsline=\noncontentsline#1{#2}\egroup}
\tocless{\subsection*{Proof of \cref{thm:weaktrace}}}
First, let $u\in \rmC^1(\calD)\cap \rmC(\overline{\calD})$, consider $v\cdot\mathbf{n}(x) u \eta(v)$ where $\eta(v) \in \rmC_0^1(\calV)$ is non-negative.
By integration by parts, we have
\begin{equation*}
    \iint_{\calD} (v\cdot\nabla_x u) ( v\cdot\mathbf{n}(x) u \eta) \,\rmd x\rmd v = \int_{\partial^x \calD} \abs{v\cdot\mathbf{n}_x}^2 u^2 \eta \,\rmd S - \iint_{\calD} u (v\cdot\nabla_x (v\cdot\mathbf{n}(x) u \eta))\,\rmd x\rmd v.
\end{equation*}
It follows that
\begin{equation*}
    \int_{\partial^x \calD} \abs{v\cdot\mathbf{n}_x}^2 u^2 \eta \,\rmd S = 2\iint_{\calD} (v\cdot\nabla_x u) ( v\cdot\mathbf{n}(x) u \eta) \,\rmd x\rmd v
    + \iint_{\calD} u^2 \eta v\cdot\nabla_x (v\cdot\mathbf{n}(x) )\,\rmd x\rmd v,
\end{equation*}
which implies by duality that
\begin{multline*}
    \int_{\partial^x \calD} \abs{v\cdot\mathbf{n}_x}^2 u^2\eta \,\rmd S \leq \norm{v\cdot\mathbf{n}(x)u\eta}_{\rmL^2(\calU;\rmH^1(\calV))}\norm{v\cdot\nabla_x u}_{\rmL^2(\calU;\rmH^{-1}(\calV))} \\
    + \sup_{\calD}\abs{\eta v\cdot\nabla_x(v\cdot\mathbf{n}(x))} \norm{u}_{\rmL^2(\calD)}^2
    \leq C \norm{u}_{\rmH^1_{\mathrm{hyp}}}^2,
\end{multline*}
where $C$ is a constant independent of $u$.
Thus, by density of smooth functions, \cref{thm:contran:converge}, we prove \cref{eq:weaktrace}.

We now prove the integration by parts formula, \cref{eq:intbypart}, under the assumption that $\partial \calV$ is $\rmC^{1,1}$ for simplicity.
Let $w \in \rmC^1(\overline{\calD})$ and let $\varphi \in \rmC^1(\overline{\calD})$ such that $\varphi|_{\partial^v \calD} = 0$.
By an integration by parts we have
\begin{equation*}
    \iint_{\calD} (v\cdot\nabla_x w) \varphi \,\rmd x\rmd v = \int_{\partial^x \calD} v \cdot \mathbf{n}(x) w \varphi \,\rmd S - \iint_{\calD} w (v\cdot\nabla_x \varphi)\,\rmd x\rmd v.
\end{equation*}
First we claim that there exists a function $\eta(v) \in \rmC_0^1(\calV)$, $0 \leq \eta \leq 1$ such that $\frac{\varphi^2(x,v)}{\eta(v)} \in L^\infty(\partial^x \calD)$. Indeed, we can take $\eta(v) = \min\{1, \text{dist}(v, \partial \calV)\}$, because $\varphi$ is globally Lipschitz continuous we immediately have that $\frac{\varphi^2(x,v)}{\eta(v)} \in L^\infty(\partial^x \calD)$.
\begin{align*}
    \int_{\partial^x \calD} |v \cdot \mathbf{n}(x) w \varphi| \,\rmd S
     & =
    \int_{\partial^x \calD} \left |v \cdot \mathbf{n}(x) w \sqrt{\eta(v)} \frac{\varphi(x,v)}{\sqrt{\eta(v)}} \right | \,\rmd S
    \\
     & \leq
    \left (\int_{\partial^x \calD} |v \cdot \mathbf{n}(x)|^2 w^2 \eta(v) \rmd S\right )^{1/2}
    \left (\int_{\partial^x \calD} \frac{\varphi^2(x,v)}{\eta(v)} \rmd S \right )^{1/2}.
\end{align*}
Now take $w = u_{\star k}-u_{\star l}$, then we have
\begin{equation*}
    \int_{\partial^x \calD} |v \cdot \mathbf{n}(x)| |u_{\star k} - u_{\star l}| |\varphi| \,\rmd S
    \leq C \norm{u_{\star k}-u_{\star l}}_{\rmH^1_{\mathrm{hyp}}}^2.
\end{equation*}
Now by \cref{thm:contran:converge} we have $u_{\star k}-u_{\star l} \to 0$ in $\rmH^1_{\textrm{hyp}}$, as $k,l\to\infty$. Which implies that up to a subsequence $u_{\star k} \to \tr_x(u)$ a.e.~on $\partial^x \calD$, which proves \cref{eq:intbypart}.
\begin{flushright}
    \qedsymbol
\end{flushright}

\section{Notes on resolutivity in the time-dependent case}
In this section we consider an alternate proof of \cref{thm:pwbapprox} in the time-dependent case (see \cref{eq:time}), where we can dispense with \cref{assump:2}. We first note that all the necessary results in \cref{sec:perron,sec:pwbsol} can be extended to the time-dependent case with minor modifications. The corresponding sub/super solutions defined using the comparison principle with respect to continuous solutions of $\partial_t u - \L u = 0$ are called $\mathrm{K}$-sub/super-solutions in this section. We also note that the weak formulation of the time-dependent equation can be extracted from \cref{def:rensub}.

\begin{lemma}\label{thm:pwbapproxtime}
    Let $\Omega\subset\R^{2n+1}$ be a bounded open set.
    If $g$ is any continuous function on $\overline{\Omega}$.
    Then, for any $\varepsilon>0$ there exists a function $u$ which is the difference of two smooth $\mathrm{K}$-subsolutions on a domain containing $\overline{\Omega}$ such that $\sup_{\overline{\Omega}} \abs{u-g}<\varepsilon$.
\end{lemma}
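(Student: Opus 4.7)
The plan is to mimic the proof of \cref{thm:pwbapprox}, with the time derivative $\partial_t$ now playing the role of the non-divergence rewriting that required \cref{assump:2}. By the Weierstrass approximation theorem on the compact set $\overline{\Omega}$, it suffices to show that every polynomial $p(t,x,v)$ can be written as a difference of two smooth K-subsolutions on a slight enlargement $\widetilde{\Omega}\supset\overline{\Omega}$, since then uniform approximation of $g$ by polynomials up to error $\varepsilon/2$ will give the required $u$ after adjusting constants.

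For such a polynomial $p$, I would take
\[
w_2(t)\;=\;-\widehat{c}\,t,\qquad w_1(t,x,v)\;=\;p(t,x,v)-\widehat{c}\,t,
\]
with $\widehat{c}>0$ to be chosen large in terms of $p$. The function $w_2$ depends only on $t$, so its weak-subsolution inequality reduces to $-\widehat{c}\iiint\varphi\,\mathrm{d}z\le 0$ for every non-negative $\varphi\in \rmC^\infty_c(\widetilde{\Omega})$, and the evolutionary analogue of \cref{thm:subisksub} then promotes this to the K-subsolution property on every cylinder. For $w_1$, an integration by parts in $t$ and $x$ turns the weak-subsolution inequality into the requirement that
\[
\iiint\bigl(-\widehat{c}+\partial_t p-\mathbf{b}\cdot\nabla_v p-v\cdot\nabla_x p\bigr)\varphi\,\mathrm{d}z+\iiint \mathbf{A}\nabla_v p\cdot\nabla_v\varphi\,\mathrm{d}z\le 0
\]
hold for all non-negative $\varphi$. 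The first integrand is a polynomial, pointwise bounded on $\overline{\Omega}$ by a constant $C_p$ depending on $p$, $\nu$, and $\sup_{\overline{\Omega}}|v|$; choosing $\widehat{c}\ge C_p$ makes this contribution non-positive.

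The diffusion integral $\iiint\mathbf{A}\nabla_v p\cdot\nabla_v\varphi\,\mathrm{d}z$ is the main obstacle: without \cref{assump:2} it cannot be integrated by parts into a pointwise bounded function, and it has no definite sign against arbitrary non-negative test functions. My plan to handle it is to refine the approximating class: rather than using arbitrary polynomials, approximate $g$ by finite sums $\widetilde{p}(t,x,v)=\sum_j a_j(t,x)\psi_j(v)$ where each $a_j$ is a polynomial in $(t,x)$ and each $\psi_j$ is a smooth $v$-convex building block chosen so that $\nabla_v\cdot(\mathbf{A}\nabla_v\psi_j)$ is a non-negative distribution, which makes the corresponding diffusion integral $\le 0$ against non-negative $\varphi$; combined with $\mathbf{A}\ge\lambda\mathbf{I}$ and the polynomial bounds on the $a_j$, this yields the required subsolution inequality for $\widetilde{w}_1=\widetilde{p}-\widehat{c}\,t$. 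The hardest step, which is the evolutionary analogue of the pointwise inequality $\mathscr{L}(\widehat{c}\,e^{\delta v\cdot(x+\mathbf{q})})\ge 1$ used in the stationary proof, is to verify both the sign property for suitable $\psi_j$ and the density of the span of $\{a\cdot\psi : a\in\mathbb{R}[t,x],\ \psi\ \text{admissible}\}$ in $\rmC(\overline{\Omega})$; the second follows from a Stone--Weierstrass-type argument once enough independent $\psi_j$ are exhibited, while the first may require solving auxiliary smoothed elliptic problems for $\nabla_v\cdot(\mathbf{A}\nabla_v\cdot)$ at the level of the mollified coefficients and passing to the limit.
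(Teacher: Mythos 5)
Your first two steps coincide with the paper's: approximate $g$ by a polynomial $p$, note that $(\partial_t-\L)(t+1)=1$ weakly (no differentiation of $\mathbf{A}$ is needed for a function of $t$ alone), and write $p=(p-C(t+1))-(-C(t+1))$; your $-\widehat{c}\,t$ plays the same role. The gap is exactly where you stop: the diffusion term. Your proposed repair --- approximants $\sum_j a_j(t,x)\psi_j(v)$ with building blocks satisfying $\nabla_v\cdot(\mathbf{A}\nabla_v\psi_j)\geq 0$ --- is not carried out, and as described it would not work. First, even granting such $\psi_j$, the coefficients $a_j(t,x)$ change sign, and $\nabla_v\cdot\bigl(\mathbf{A}\nabla_v(a_j\psi_j)\bigr)=a_j\,\nabla_v\cdot(\mathbf{A}\nabla_v\psi_j)$ loses its sign wherever $a_j<0$, so the subsolution inequality for $\widetilde{w}_1$ does not follow. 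Second, under \cref{assump:1} alone $\mathbf{A}=\mathbf{A}(x,v)$ is merely bounded measurable: no fixed smooth non-affine $\psi(v)$ has $\nabla_v\cdot(\mathbf{A}\nabla_v\psi)\geq 0$ in general (already for $n=1$ and $\psi=v^2/2$ this is $a+v\,\partial_v a$, a signless distribution), and solving auxiliary problems such as $\nabla_v\cdot(\mathbf{A}\nabla_v\psi)=1$ yields functions that depend measurably on $x$, are only weakly regular in $v$, depend on $\mathbf{A}$, and have an uncontrolled transport term $v\cdot\nabla_x\psi$ --- while the lemma asks for \emph{smooth} K-subsolutions. Third, the density of the span of such products in $\rmC(\overline{\Omega})$ is asserted rather than proved; a Stone--Weierstrass argument needs an algebra, and products of your building blocks do not preserve the sign structure. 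So the central step of the lemma is missing.

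For contrast, the paper never tries to give $\nabla_v\cdot(\mathbf{A}\nabla_v p)$ a pointwise meaning or a sign. Using only the boundedness of $\mathbf{A}$ and $\mathbf{b}$ and the smoothness of $p$, it observes that $\partial_t p-\L p$ defines an element of $\rmL^2((t_1,t_2)\times\calU;\rmH^{-1}(\calV))$, then argues that for $C$ large the subtracted term $C(t+1)$ forces $[\partial_t w-\L w](\varphi)\leq 0$ for all $\varphi\geq 0$ (an absorption argument invoking the $\rmL^1$-Poincar\'e inequality), and finally upgrades the smooth, bounded weak subsolution $w$ to a K-subsolution via the time-dependent analogue of \cref{thm:subisksub}. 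Your concern that the term $\iiint(\mathbf{A}\nabla_v p)\cdot\nabla_v\varphi\,\mathrm{d}z$ has no sign against arbitrary $\varphi\geq 0$ is a legitimate point to scrutinize in that absorption step, but your proposal neither reproduces the paper's functional-analytic route nor supplies a working substitute, so as it stands the proof is incomplete.
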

\begin{proof}
    As in \cref{thm:pwbapprox}, we first approximate $g$ by a polynomial $u$ such that $\sup_{\overline{\Omega}} \abs{u-g} <\varepsilon$.
    Then observe that $(\partial_t-\mathscr{L})(t+1) = 1> 0$ weakly and set
    \begin{equation*}
        u = \left( u - C (t+1)\right) -(-C (t+1)),
    \end{equation*}
    where $C>0$ is a constant to be determined.

    Now to show that $u-C(t+1)$ is a $\mathrm{K}$-subsolution we first verify that it is a local weak solution. To begin we consider a cylinder $\calD_{(t_1,t_2)} := (t_1,t_2)\times\calD \supset \overline{\Omega}$ and a smooth function $\varphi\in \rmC_c^\infty(\calD_{(t_1,t_2)})$ to see that
    \begin{align*}
        [\partial_t u - \L u](\varphi)
        :=       &
        \iiint_{\calD_{(t_1,t_2)}} \left(\partial_t u \varphi + (\bfA \nabla_v u) \cdot \nabla_v \varphi -(\mathbf{b}\cdot\nabla_v u) \varphi - v \cdot \nabla_x u \varphi \right)\, \mathrm{d}z
        \\
        \lesssim &
        \norm{\varphi}_{\rmL^1((t_1,t_2)\times\calU; \mathrm{W}^{1,1}(\calV))} \lesssim \norm{\varphi}_{\rmL^2((t_1,t_2)\times\calU; \rmH^1(\calV))},
    \end{align*}
    as such $\partial_t u - \L u \in \rmL^2((t_1,t_2)\times\calU; \rmH^{-1}(\calV))$.
    If we consider $w = u - C(t+1)$ and $\varphi \geq 0$, we have that for $C$ large enough by the $\rmL^1$-Poincaré inequality that
    \begin{align*}
        [\partial_t w - \L w](\varphi) \leq 0
    \end{align*}
    as a distribution, hence it is also a weak subsolution in the sense of \cref{def:weaksol}. We represent its action on $\varphi$ as
    \begin{align*}
        [\partial_t w - \L w](\varphi) =: \iiint_{\calD_{(t_1,t_2)}} f \varphi \, \mathrm{d}z
    \end{align*}
    where $f\in \rmL^2((t_1,t_2)\times\calU; \rmH^{-1}(\calV))$.
    As such, since $w$ is smooth and bounded, from the time-dependent version of \cref{thm:subisksub} we see that $w$ is a $\mathrm{K}$-subsolution.

\end{proof}

\printbibliography

\end{document}